\providecommand{\U}[1]{\protect\rule{.1in}{.1in}}
\newtheorem{theorem}{Theorem}
\theoremstyle{plain}
\newtheorem{corollary}[theorem]{Corollary}
\newtheorem{lemma}[theorem]{Lemma}
\newtheorem{proposition}[theorem]{Proposition}
\numberwithin{theorem}{section}
\numberwithin{equation}{section}
\begin{document}
\title[From Lomonosov Lemma...]{From Lomonosov Lemma to Radical Approach in Joint Spectral Radius Theory}
\author{Edward Kissin}
\address{E. Kissin: STORM, London Metropolitan University, 166-220 Holloway Road,
London N7 8DB, Great Britain; }
\email{e.kissin@londonmet.ac.uk}
\author{Victor S. Shulman}
\address{V. S. Shulman: Department of Mathematics, Vologda State University, Vologda,
Russia; }
\email{shulman.victor80@gmail.com}
\author{Yurii V. Turovskii}
\address{Yu. V. Turovskii}
\email{yuri.turovskii@gmail.com}
\thanks{2010 \textit{Mathematics Subject Classification.} Primary 47A15, Secondary 47L10}
\thanks{This paper is in final form and no version of it will be submitted for
publication elsewhere.}
\keywords{invariant subspace, joint spectral radius, topological radical}
\dedicatory{To the memory of Victor Lomonosov, a man who moved mountains in Mathematics}
\begin{abstract}
In this paper we discuss the infinite-dimensional generalizations of the
famous theorem of Berger-Wang (generalized Berger-Wang formulas) and give an
operator-theoretic proof of I. Morris's theorem about coincidence of three
essential joint spectral radius, related to these formulas. Further we develop
Banach-algebraic approach based on the theory of topological radicals, and
obtain some new results about these radicals.

\end{abstract}
\maketitle

\section{Introduction}

\subsection{Banach-algebraic consequences of Lomonosov Lemma}

The famous Lomonosov Lemma \cite{L73} states:\medskip

\textit{If an algebra }$A$\textit{ of operators on a Banach space }$X$\textit{
contains a non-zero compact operator }$T$\textit{ then either }$A$\textit{ has
a non-trivial closed invariant subspace }(IS, for brevity)\textit{ or it
contains a compact operator with a non-zero point in spectrum.\medskip}

An immediate consequence of this result is that \textit{any algebra of compact
quasinilpotent operators has an IS}; the standard technique gives then that
such an algebra is triangularizable.

M. G. Krein proposed to call compact quasinilpotent operators \textit{Volterra
operators}; respectively, a set of operators is called \textit{Volterra }if
all its elements are Volterra operators. Thus \textit{any Volterra algebra has
an IS}. This result was extended by the second \textbf{ }author \cite{Sh84} as
follows:\textit{\medskip}

\textit{Any Volterra algebra }$A$\textit{\textit{ has an IS which is also
invariant for all operators commuting with }}$A$ (such subspaces are
called\textit{ \textit{hyperinvariant}})\textit{.\medskip\ }

Besides of\textbf{ }Lomonosov's technique the proof used estimations of the
norms of products for elements of a Volterra algebra $A$; in fact, it was
proved in \cite{Sh84} that\textit{\textit{ the joint spectral radius}}
$\rho(M)$\textit{\textit{ of any finite set }}$M\subseteq A$\textit{\textit{
equals} }$0$, i.e. $A$ is \textit{finitely quasinilpotent}.

This result can be considered as an application of the invariant subspace
theory to the theory of joint spectral radius. Conversely, the second part of
the proof in \cite{Sh84} is an application of the joint spectral radius
technique to the invariant subspace theory (again via Lomonosov's theorem
about Volterra algebras): if $M=\{T_{1},...,T_{n}\}$ and $\rho(M)=0$ then
\[
\rho\left(  \sum_{i=1}^{n}T_{i}S_{i}\right)  =0
\]
for all operators $S_{i}$ commuting with every operator from $M$. So the
algebra generated by a Volterra algebra $A$ and its commutant has a non-zero
Volterra ideal. The interaction of these theories remained to be fruitful in
subsequent studies.

The notion of the joint spectral radius of a bounded subset $M$ in a normed
algebra $A$ was introduced by Rota and Strang \cite{RS60}. To give precise
definition, let us set $\Vert M\Vert=\sup\{\Vert a\Vert:a\in M\}$, the
\textit{norm} of $M$, and $M^{n}=\left\{  a_{1}\cdots a_{n}:a_{1},\ldots
,a_{n}\in M\right\}  $, the $n$\textit{-power} of $M$. The number
\[
\rho\left(  M\right)  :=\lim\left\Vert M^{n}\right\Vert ^{1/n}=\inf\left\Vert
M^{n}\right\Vert ^{1/n}%
\]
is called a (\textit{joint}) \textit{spectral radius} of $M$. If $\rho(M)=0$
then we say that $M$ is \textit{quasinilpotent}.


In \cite{T99} the third author, using the joint spectral radius approach,
obtained the solution of Volterra semigroup problem posed by Heydar Radjavi:
it was proved in \cite{T99} that \textit{any Volterra semigroup generates a
Vollterra algebra} and, therefore, has an IS by Lomonosov's theorem.
Further,\textbf{ } in \cite{ShT00} it was proved that\textit{ any Volterra Lie
algebra has an IS}; this result can be regarded as an infinite-dimensional
extension of Engel Theorem, playing the fundamental role in the theory of
finite-dimensional Lie algebras.

One of the main technical tools obtained and applied in \cite{ShT00} was an
infinite-dimensional extension of the Berger-Wang Theorem \cite{BW92}, a
fundamental result of the\textbf{ } finite dimensional linear dynamics
\cite{Jun09}. This theorem establishes the equality%
\begin{equation}
\rho(M)=r(M), \label{BW}%
\end{equation}
for any bounded set $M$ of matrices, where{\large
\[
r\left(  M\right)  :=\lim\sup\left\{  \rho\left(  a\right)  :a\in
M^{n}\right\}  ^{1/n};
\]
}the number $r\left(  M\right)  $ called a $BW$\textit{-radius} of $M$. In
\cite{ShT00} the equality (\ref{BW}) was proved for any precompact set $M$
\textbf{ }of compact operators on an \textit{infinite-dimensional }\textbf{
}Banach space.


To see the importance of validity of (\ref{BW}) for precompact sets of compact
operators, note that it easily implies that if $G$ is a Volterra semigroup
then $\rho(M)=0$, for each precompact $M\subset G$ (because clearly $r(M)=0$).
This result proved in \cite{T99} played a crucial role in the solution of
Volterra semigroup problem. But it should be said that the proof of (\ref{BW})
in \cite{ShT00} used the results of \cite{T99}.

Other results on invariant subspaces of operator semigroups, Lie algebras and
Jordan algebras were obtained on \textbf{ }this way in \cite{ShT00, ShT05,
KeST}{\large . }

\subsection{The generalized $BW$-formula}

To move further we have to introduce some "essential radii" $\rho_{e}(M)$,
$\rho_{f}(N)$ and $\rho_{\chi}(M)$ of a set $M$ of operators on a Banach space
$X$. They are defined in the same way as $\rho(M)$ but by using seminorms
$\Vert\cdot\Vert_{e}$, $\Vert\cdot\Vert_{f}$ and $\Vert\cdot\Vert_{\chi},$
instead of the operator norm $\Vert\cdot\Vert$.

Let $B(X)$ be the algebra of all bounded linear operators on $X$, and $K(X)$
the ideal of all compact operators. The essential norm $\Vert T\Vert_{e}$ of
an operator $T\in B(X)$ is just the norm of the image $T+K(X)$ of $T$ in the
quotient $B(X)/K(X)$; in other words%
\[
\Vert T\Vert_{e}=\inf\{\Vert T-S\Vert:S\text{ is a compact operator}\}.
\]
Similarly
\[
\Vert T\Vert_{f}=\inf\{\Vert T-S\Vert:S\text{ is a finite rank operator}\}.
\]
The Hausdorff norm $\Vert T\Vert_{\chi}$ is defined as $\chi(TX_{\odot})$, the
Hausdorff measure of noncompactness of the image of the unit ball $X_{\odot}$
of $X$ under $T$. Recall that, for any bounded subset $E$ of $X$, the value
$\chi(E)$ is equal to the infimum of such $t>0$ that $E$ has a finite $t$-net.

It is easy to check that $\Vert T\Vert_{\chi}\leq\Vert T\Vert_{e}\leq\Vert
T\Vert_{f}$ and therefore
\begin{equation}
\rho_{\chi}(M)\leq\rho_{e}(M)\leq\rho_{f}(M), \label{in}%
\end{equation}
for each bounded set $M\subset B(X)$. The number $\rho_{\chi}(M)$ is called
the \textit{Hausdorff radius}, $\rho_{e}(M)$ the \textit{essential radius},
and $\rho_{f}(M)$ the $f$-\textit{essential radius} of $M$.

In what follows, for a set $M$ in a normed algebra $A$ and a closed ideal $J$
of $A$, we write $M/J$ for the image of $M$ in $A/J$ under the canonic
quotient map $q_{J}^{{}}:A\longrightarrow A/J$:%

\[
M/J:=q_{J}^{{}}\left(  M\right)  .
\]
So we write $\rho_{e}(M)=\rho(M/K\left(  X\right)  )$. This reflects the fact
that essential radius $\rho_{e}(M)$ is the usual joint spectral radius of the
image of $M$ in the Calkin algebra $B(X)/K(X)$.

{\large I}n \cite{ShT02} the following extension of (\ref{BW}) to precompact
sets of general (not necessarily compact) operators was obtained:
\begin{equation}
\rho(M)=\max\{\rho_{\chi}(M),r(M)\}. \label{GBWF}%
\end{equation}
It was proved under assumption that $X$ is reflexive (or, more generally, that
$M$ consists of weakly compact operators). We call this equality the
\textit{generalized }$BW$\textit{-formula}{\large . }

Furthermore, in the short communication \cite{ShT01} a Banach algebraic
version of the generalized $BW$-formula was announced (see (\ref{aGBWF})
below) which, being applied to the algebra $B(X)$, shows that
\begin{equation}
\rho(M)=\max\{\rho_{e}(M),r(M)\}. \label{GBWFe}%
\end{equation}
for all Banach spaces. The proof of (\ref{GBWF}) in full generality was
firstly presented in the arXive publication \cite{ShT08}; the journal version
appeared in \cite{ShT12}.

Several months after presentation of \cite{ShT08}, I. Morris in arXive
publication \cite{Mor09} gave another proof of (\ref{GBWF}) based on the
multiplicative ergodic theorem of Tieullen \cite{Th87} and deep technique of
the theory of cohomology of dynamical systems.\textbf{ }The main result of
\cite{Mor09} establishes an equality similar to (\ref{GBWF}) for operator
valued cocycles of dynamical systems. It was also proved in \cite{Mor09}
that{\large
\begin{equation}
\rho_{\chi}(M)=\rho_{e}(M)=\rho_{f}(M) \label{main}%
\end{equation}
}for any precompact set $M\subset B(X)$. The journal publication of these
results appeared in \cite{Mor12}.

Here we give another, operator-theoretic proof of (\ref{main}) and then
discuss related Banach-algebraic results and constructions connected with the
different joint spectral radius formulas. It will be shown that topological
radicals present a convenient tool in the search of an optimal joint spectral
radius formula.

\section{Coincidence of Hausdorff and essential radii}

In this section we are going to prove the equality $\rho_{\chi}(M)=\rho
_{e}(M)$, for any precompact set in $B(X)$; the proof of the equality
$\rho_{e}(M)=\rho_{f}(M)$ will be presented in the next section.

\subsection{An estimation of the Hausdorff radius for multiplication
operators}

At the beginning we transfer some results of \textbf{ }\cite{ShT00} from
operators to elements of the Calkin algebra $B(X)/K(X)$. We use the following
link of Hausdorff norm with the Hausdorff measure of non-compactness:

\begin{lemma}
\label{chi}Let $M$ be a precompact subset of $B(X)$. Then $\chi(MW)\leq\Vert
M\Vert_{\chi}\Vert W\Vert$ for any bounded subset $W$ of $X,$ and $\Vert
M\Vert_{\chi}=\chi(MX_{\odot})$.
\end{lemma}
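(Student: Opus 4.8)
The plan is to deduce everything from a handful of standard properties of the Hausdorff measure of noncompactness $\chi$ on bounded subsets of $X$: monotonicity under inclusion; the homogeneity $\chi(\lambda E)=|\lambda|\chi(E)$; subadditivity under Minkowski sums, $\chi(E+F)\le\chi(E)+\chi(F)$; the identity $\chi\big(\bigcup_{i=1}^{k}E_{i}\big)=\max_{i}\chi(E_{i})$ for finite unions; and $\chi(X_{\odot})\le1$ (indeed $\{0\}$ is a $t$-net for $X_{\odot}$ whenever $t>1$). Each of these is elementary and follows by manipulating finite $t$-nets.

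First I would settle the case of a single operator $T\in B(X)$. Since $W\subseteq\Vert W\Vert\,X_{\odot}$, we get $TW\subseteq\Vert W\Vert\,TX_{\odot}$, and hence, by homogeneity and monotonicity, $\chi(TW)\le\Vert W\Vert\,\chi(TX_{\odot})=\Vert W\Vert\,\Vert T\Vert_{\chi}$, straight from the definition $\Vert T\Vert_{\chi}=\chi(TX_{\odot})$.

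The main step is to pass from a single operator to a precompact set $M$, and this is exactly where norm-precompactness of $M$ (not merely boundedness) is used. Fix $\varepsilon>0$ and choose $T_{1},\ldots,T_{k}\in M$ forming an $\varepsilon$-net for $M$ in the operator norm. For any $T\in M$ there is $i$ with $\Vert T-T_{i}\Vert\le\varepsilon$, so for every $x\in W$ one has $(T-T_{i})x\in\varepsilon\Vert W\Vert\,X_{\odot}$, whence $TW\subseteq T_{i}W+\varepsilon\Vert W\Vert\,X_{\odot}$. Taking the union over all $T\in M$,
\[
MW\subseteq\Big(\bigcup_{i=1}^{k}T_{i}W\Big)+\varepsilon\Vert W\Vert\,X_{\odot}.
\]
Applying the properties above together with the single-operator bound,
\[
\chi(MW)\le\max_{1\le i\le k}\chi(T_{i}W)+\varepsilon\Vert W\Vert\,\chi(X_{\odot})\le\Vert M\Vert_{\chi}\Vert W\Vert+\varepsilon\Vert W\Vert,
\]
where $\Vert M\Vert_{\chi}:=\sup\{\Vert T\Vert_{\chi}:T\in M\}$ is finite because $M$ is bounded. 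Letting $\varepsilon\to0$ yields the first assertion $\chi(MW)\le\Vert M\Vert_{\chi}\Vert W\Vert$.

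Finally, the identity $\Vert M\Vert_{\chi}=\chi(MX_{\odot})$ drops out: putting $W=X_{\odot}$ in the first assertion gives $\chi(MX_{\odot})\le\Vert M\Vert_{\chi}$, while for each $T\in M$ we have $TX_{\odot}\subseteq MX_{\odot}$, so $\Vert T\Vert_{\chi}=\chi(TX_{\odot})\le\chi(MX_{\odot})$ by monotonicity, and the supremum over $T\in M$ gives the reverse inequality. I do not expect a genuine obstacle here; the one substantive point is the reduction to a finite $\varepsilon$-net in the third step, with subadditivity of $\chi$ under Minkowski sums converting that approximation into the stated inequality, and everything else is routine bookkeeping with $t$-nets.
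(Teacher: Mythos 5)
Your proof is correct and, in substance, follows the same route the paper takes (the paper quotes the inequality from \cite[Lemma 5.2]{ShT00} and only sketches the equality, whereas you supply a complete self-contained argument). The one place where you do things a bit more cleanly than the paper's sketch is the equality: the paper argues via reducing to finite subsets of $M$ and the identity $\chi(G\cup K)=\max\{\chi(G),\chi(K)\}$, while you obtain $\chi(MX_{\odot})\le\Vert M\Vert_{\chi}$ directly by specializing the already-proved inequality to $W=X_{\odot}$ and get the reverse inequality from monotonicity alone, which avoids the finite-approximation step. The $\varepsilon$-net passage from a single $T$ to a precompact $M$, together with subadditivity of $\chi$ under Minkowski sums, is the right mechanism and is exactly where precompactness is used; the bookkeeping ($\chi(X_{\odot})\le1$, $\max_i\Vert T_i\Vert_{\chi}\le\Vert M\Vert_{\chi}$, letting $\varepsilon\to0$) is all in order.
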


The inequality in Lemma \ref{chi} was obtained\textbf{ } in \cite[Lemma
5.2]{ShT00}. The equality $\left\Vert M\right\Vert ${$_{\chi}=\chi(MX_{\odot
})$ }is obvious for a finite $M\subseteq B(X)$ by definition, due to
$\chi\left(  G\cup K\right)  =\max\left\{  \chi\left(  G\right)  ,\chi\left(
K\right)  \right\}  $ for bounded subsets of $X$ If $M$ is precompact then
$\left\Vert M\right\Vert ${$_{\chi}=\sup$}$\left\{  \left\Vert N\right\Vert
{{\chi}}\text{: }N\subseteq M\text{ is finite}\right\}  $, and the result follows.

For $T\in B(X)$, let L$_{T}$ and R$_{T}$ denote the left and right
multiplication operators on $B(X)$: \textrm{L}$_{T}P=TP$ and R$_{T}P=PT$ for
each $P\in B(X)$.

For $M\subseteq B(X)$, put L$_{M}:=\{$L$_{T}$: $T\in M\}$ and R$_{M}%
:=\{$R$_{T}$: $T\in M\}$. If $M$ is a set in a Banach algebra $A$ we define
L$_{M}$ and R$_{M}$ similarly. By \cite[Lemma 2.1]{ShT12},%
\begin{equation}
r\left(  \text{L}_{M}\text{R}_{M}\right)  =r\left(  M\right)  ^{2}\text{ and
}\rho\left(  \text{L}_{M}\text{R}_{M}\right)  =\rho\left(  M\right)  ^{2}
\label{rlm}%
\end{equation}
for every bounded set $M$ in $A$.

\begin{lemma}
{\large \label{lrmk} }Let $M$ be a bounded subset of $B\left(  X\right)  $.
Then
\[
\left\Vert \mathrm{L}_{M/K\left(  X\right)  }\mathrm{R}_{M/K\left(  X\right)
}\right\Vert _{\chi}\leq16\left\Vert M\right\Vert _{\chi}\left\Vert M/K\left(
X\right)  \right\Vert .
\]

\end{lemma}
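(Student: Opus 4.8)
The plan is to estimate the Hausdorff norm of the multiplication operator $\mathrm{L}_T\mathrm{R}_S$ acting on the Calkin algebra $B(X)/K(X)$, and then pass to the set $M$. First I would unravel what $\bigl\|\mathrm{L}_{M/K(X)}\mathrm{R}_{M/K(X)}\bigr\|_{\chi}$ means: it is $\chi\bigl(\mathcal{M}(B(X)/K(X))_{\odot}\bigr)$, where $\mathcal M = \mathrm{L}_{M/K(X)}\mathrm{R}_{M/K(X)}$ and the Hausdorff measure $\chi$ is computed in the Banach space $B(X)/K(X)$. So one needs: given finitely many operators $T_i, S_i \in M$ and norm-one elements $P_j + K(X)$, the set $\{\, T_i P_j S_i + K(X)\,\}$ can be covered, up to a controlled error, by a finite net — and the radius of that net should be bounded by (a constant times) $\|M\|_{\chi}\,\|M/K(X)\|$.

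Next I would exploit the link between the essential/Hausdorff norm in $B(X)$ and compactness of images of bounded sets, via Lemma~\ref{chi}. The key mechanism is: if $T\in M$ has small Hausdorff norm, then $T$ maps bounded subsets of $X$ to "almost precompact" subsets, quantitatively $\chi(TW)\le\|M\|_{\chi}\|W\|$. For a product $TPS$ with $\|P\|$ not too large, the image $TPSW$ then has $\chi(TPSW)\le\|M\|_{\chi}\|P\|\,\|S\|\,\|W\|$, which already controls how "compact-like" the operator $TPS$ is. The standard device (going back to the techniques of \cite{ShT00}) is to turn control of $\chi$ of images into an approximation of $TPS$, modulo $K(X)$, by operators of a uniformly bounded rank or by elements of a precompact family; the factor $16$ in the statement is exactly the kind of absolute constant that appears when one does this kind of approximation argument carefully (covering the unit ball, choosing nets, and accounting for the fact that $P$ is only given modulo $K(X)$ so one must first lift it to a representative of norm close to $\|P+K(X)\|$, and similar bookkeeping — factors of $2$ or $4$ compounding).

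The main obstacle, as I see it, is the passage from a \emph{single} product $T_iP_jS_i$ to the whole \emph{set} $\mathcal M$ of such products while keeping the net finite and its radius uniformly bounded: the operators $P_j$ range over the unit ball of $B(X)/K(X)$, which is far from precompact, so one cannot simply net the $P_j$'s. The resolution should be that $M$ (hence $\mathrm{L}_M$, $\mathrm{R}_M$) is precompact in norm, so one nets $T_i$ and $S_i$ instead, and the "bad" directions in $P_j$ are absorbed because $T_i$ and $S_i$ already squash things by $\|M\|_{\chi}$ on the $\chi$-scale while the full operator norm $\|M/K(X)\|$ controls the residual $K(X)$-modulo part. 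Concretely I expect the estimate to factor as: approximate $\mathrm{L}_{T}$ by $\mathrm{L}_{T'}$ with $T'$ from a finite net of $M$ (error $\le$ something times $\|M\|_{\chi}$ in the relevant seminorm), do the same for $\mathrm{R}_S$, and for the finitely many remaining pairs $(T',S')$ observe that $Q\mapsto T'QS' + K(X)$ is a \emph{compact} operator on $B(X)/K(X)$ up to error $\|M\|_{\chi}$ — because $T'(\cdot)S'$ maps the unit ball into a set of Hausdorff measure $\le\|M\|_\chi\|M/K(X)\|$ by Lemma~\ref{chi} — whence its $\chi$-norm is bounded. Tracking the multiplicative constants through the two net-approximations and the lifting of $P$ from the quotient yields the bound with constant $16$. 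I would expect the write-up to invoke \eqref{rlm}-style bookkeeping for $\mathrm{L}_M\mathrm{R}_M$ only implicitly; the real content is Lemma~\ref{chi} plus the netting argument.
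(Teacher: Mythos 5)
Your proposal has genuine gaps and would not assemble into a proof. The central issue is your claim that Lemma~\ref{chi} already bounds the Hausdorff measure of the set $T'(B(X))_{\odot}S'$ by $\Vert M\Vert_{\chi}\Vert M/K(X)\Vert$. Lemma~\ref{chi} estimates $\chi(MW)$ for bounded sets $W\subset X$, i.e.\ images of subsets of the \emph{underlying space}; it says nothing about the Hausdorff measure of $T'(B(X))_{\odot}S'$ inside $B(X)$, which is what $\Vert\mathrm{L}_{T'}\mathrm{R}_{S'}\Vert_{\chi}$ measures. Passing from control of $\chi$ on subsets of $X$ to control of $\chi$ on subsets of $B(X)$ is precisely the nontrivial step, and it is supplied by a separate result, \cite[Lemma~6.4]{ShT00}: $\Vert\mathrm{L}_{T}\mathrm{R}_{S}\Vert_{\chi}\le 4(\Vert T^{\ast}\Vert_{\chi}\Vert S\Vert+\Vert S\Vert_{\chi}\Vert T\Vert)$. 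You never invoke this (or any substitute), so your "netting" reduction to fixed pairs $(T',S')$ has nothing to drive it.

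Two further points you miss, both essential for getting the stated constant and the stated right-hand side. First, the factor $\Vert M/K(X)\Vert$ (rather than $\Vert M\Vert$) is obtained not by "lifting $P$ from the quotient" but by perturbing $T$ and $S$ themselves by compact operators: $\Vert T-P\Vert_{\chi}=\Vert T\Vert_{\chi}$ for $P\in K(X)$, so one may take an infimum over compacts inside the elementwise estimate, turning $\Vert T\Vert$, $\Vert S\Vert$ into the essential norms. Second, the constant $16$ arises concretely from the $4$ in \cite[Lemma~6.4]{ShT00}, the inequality $\Vert T^{\ast}\Vert_{\chi}\le 2\Vert T\Vert_{\chi}$ of Goldenstein--Markus \cite{GM65}, and a final sup over the two terms; it is not a byproduct of compounding net-approximation errors. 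Finally, note the lemma assumes $M$ only bounded, not precompact, and the paper's proof uses no netting at all; your plan to net $T_i$ and $S_i$ leans on a precompactness hypothesis that is not available here and, in any case, is unnecessary once the cited elementwise estimate is in hand.
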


\begin{proof}
Let $T,S\in B\left(  X\right)  $. It is clear that
\begin{align*}
\left\Vert \mathrm{L}_{T/K\left(  X\right)  }\mathrm{R}_{S/K\left(  X\right)
}\right\Vert _{\chi}  &  =\chi\left(  \left(  T/K\left(  X\right)  \right)
\left(  B\left(  X\right)  /K\left(  X\right)  \right)  _{\odot}\left(
S/K\left(  X\right)  \right)  \right) \\
&  \leq\chi\left(  T\left(  B\left(  X\right)  \right)  _{\odot}S\right)
=\left\Vert \mathrm{L}_{T}\mathrm{R}_{S}\right\Vert _{\chi}.
\end{align*}
By \cite[Lemma 6.4]{ShT00}, {\large $\left\Vert \mathrm{L}_{T}\mathrm{R}%
_{S}\right\Vert _{\chi}\leq4\left(  \left\Vert T^{\ast}\right\Vert _{\chi
}\left\Vert S\right\Vert +\left\Vert S\right\Vert _{\chi}\left\Vert
T\right\Vert \right)  $ }for any $T,S\in B\left(  X\right)  ${\large .} As
$\left\Vert T^{\ast}\right\Vert _{\chi}\leq2\left\Vert T\right\Vert _{\chi}$
by \cite{GM65}, and $\left\Vert T-P\right\Vert _{\chi}=\left\Vert T\right\Vert
_{\chi}$, $\left\Vert S-F\right\Vert _{\chi}=\left\Vert S\right\Vert _{\chi}$,
for any $P,F\in K\left(  X\right)  $, we obtain that%
\begin{align*}
\left\Vert \mathrm{L}_{T/K\left(  X\right)  }\mathrm{R}_{S/K\left(  X\right)
}\right\Vert _{\chi}  &  \leq\inf_{P,F\in K\left(  X\right)  }\left\Vert
\mathrm{L}_{T-P}\mathrm{R}_{S-F}\right\Vert _{\chi}\\
&  \leq8\inf_{P,F\in K\left(  X\right)  }\left(  \left\Vert T\right\Vert
_{\chi}\left\Vert S-F\right\Vert +\left\Vert S\right\Vert _{\chi}\left\Vert
T-P\right\Vert \right) \\
&  =8\left(  \left\Vert T\right\Vert _{\chi}\left\Vert S/K\left(  X\right)
\right\Vert +\left\Vert S\right\Vert _{\chi}\left\Vert T/K\left(  X\right)
\right\Vert \right)  .
\end{align*}
Therefore{\large
\begin{align*}
\left\Vert \mathrm{L}_{M/K\left(  X\right)  }\mathrm{R}_{M/K\left(  X\right)
}\right\Vert _{\chi}  &  \leq8\sup_{T,S\in M}\left(  \left\Vert T\right\Vert
_{\chi}\left\Vert S/K\left(  X\right)  \right\Vert +\left\Vert S\right\Vert
_{\chi}\left\Vert T/K\left(  X\right)  \right\Vert \right) \\
&  \leq16\sup_{T\in M}\left\Vert T\right\Vert _{\chi}\sup_{S\in M}\left\Vert
S/K\left(  X\right)  \right\Vert =16\left\Vert M\right\Vert _{\chi}\left\Vert
M/K\left(  X\right)  \right\Vert .
\end{align*}
}
\end{proof}

\subsection{Semigroups in the Calkin algebra}

Let $M\subseteq B\left(  X\right)  $, and let SG$\left(  M\right)  $ be the
semigroup generated by $M$. The same notation is used if $M$ is a subset of an
arbitrary Banach algebra.

\begin{proposition}
\label{mpre} Let $M$ be a precompact subset of $B\left(  X\right)  $. If
$\mathrm{SG}\left(  M/K\left(  X\right)  \right)  $ is bounded and
$\rho_{\mathsf{\chi}}\left(  M\right)  <1$ then $\mathrm{SG}\left(  M/K\left(
X\right)  \right)  $ is precompact.
\end{proposition}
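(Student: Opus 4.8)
The plan is to show that $\chi(\mathrm{SG}(M/K(X)))=0$ in the Calkin algebra $B(X)/K(X)$, where $\chi$ denotes the Hausdorff measure of noncompactness; since $\mathrm{SG}(M/K(X))$ is assumed bounded, $\chi=0$ forces it to be totally bounded, hence precompact. I would write $B_\odot$ for the unit ball of $B(X)/K(X)$ and put $C:=\sup\{\|a\|:a\in\mathrm{SG}(M/K(X))\}$, which is finite by hypothesis. Every power $(M/K(X))^n=M^n/K(X)$ is precompact because $M$ is; but --- and this is the heart of the matter --- $\chi$ of a \emph{countable} union is in general larger than the supremum of the $\chi$'s of its members, so precompactness of each power does not yield precompactness of the semigroup. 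What is needed is a uniform bound on the tails $\bigcup_{n>N}(M/K(X))^n$.

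To get such a bound, fix $m\ge1$ and split
\[
\mathrm{SG}(M/K(X))=\Big(\bigcup_{n=1}^{2m}(M/K(X))^n\Big)\cup T_m,\qquad T_m:=\bigcup_{n>2m}(M/K(X))^n .
\]
The first term is a finite union of precompact sets, so its $\chi$ is $0$ and $\chi(\mathrm{SG}(M/K(X)))=\chi(T_m)$. For $T_m$ I would absorb all but the first and last $m$ factors of each product into one fixed bounded set: if $n>2m$ and $a_1,\dots,a_n\in M/K(X)$, then
\[
a_1\cdots a_n=\mathrm{L}_{a_1\cdots a_m}\,\mathrm{R}_{a_{n-m+1}\cdots a_n}\big(a_{m+1}\cdots a_{n-m}\big),
\]
where the two outer blocks lie in $(M/K(X))^m$ and the middle block lies in $(M/K(X))^{n-2m}\subseteq\mathrm{SG}(M/K(X))$, hence has norm $\le C$. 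Therefore
\[
T_m\subseteq\mathcal{N}_m(C\,B_\odot),\qquad \mathcal{N}_m:=\mathrm{L}_{(M/K(X))^m}\mathrm{R}_{(M/K(X))^m}=(\mathrm{L}_{M/K(X)}\mathrm{R}_{M/K(X)})^m=\mathrm{L}_{M^m/K(X)}\mathrm{R}_{M^m/K(X)}.
\]

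Now I would estimate $\chi(T_m)$. As $M$ is precompact, so are $M/K(X)$ and $(M/K(X))^m$, and since $a\mapsto\mathrm{L}_a$ and $b\mapsto\mathrm{R}_b$ are isometries and composition is continuous, $\mathcal{N}_m$ is a precompact family of operators on $B(X)/K(X)$. Lemma \ref{chi} --- whose proof is valid over any Banach space, here $B(X)/K(X)$ --- then gives $\chi(T_m)\le\chi\big(\mathcal{N}_m(C\,B_\odot)\big)\le C\,\|\mathcal{N}_m\|_\chi$, and Lemma \ref{lrmk} applied to the bounded set $M^m$ gives
\[
\|\mathcal{N}_m\|_\chi=\big\|\mathrm{L}_{M^m/K(X)}\mathrm{R}_{M^m/K(X)}\big\|_\chi\le16\,\|M^m\|_\chi\,\|M^m/K(X)\|\le16\,C\,\|M^m\|_\chi.
\]
Hence $\chi(\mathrm{SG}(M/K(X)))=\chi(T_m)\le16\,C^2\,\|M^m\|_\chi$ for \emph{every} $m\ge1$. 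Since $\rho_{\chi}(M)=\lim_m\|M^m\|_\chi^{1/m}<1$, we have $\|M^m\|_\chi\to0$, so letting $m\to\infty$ gives $\chi(\mathrm{SG}(M/K(X)))=0$, which completes the argument.

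The main obstacle is precisely the uniform control of the tail $T_m$, and it is here that both hypotheses are spent: boundedness of $\mathrm{SG}(M/K(X))$ is what permits all the middle blocks to be pushed into the single bounded set $C\,B_\odot$, uniformly in $n$, while $\rho_{\chi}(M)<1$ --- funneled through the two-sided multiplication estimate of Lemma \ref{lrmk} --- is what makes the multiplier $\mathcal{N}_m$ have vanishing $\chi$-norm. A subordinate technical point is to confirm that $\mathcal{N}_m$ is precompact and that the inequality $\chi(\mathcal{N}W)\le\|\mathcal{N}\|_\chi\|W\|$ (i.e.\ Lemma \ref{chi}) applies with $B(X)/K(X)$ playing the role of the underlying Banach space.
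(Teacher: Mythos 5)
Your proof is correct and is essentially the paper's argument: you control the tail $\bigcup_{n>2m}(M/K(X))^n$ by writing each long product as two outer blocks in $(M/K(X))^m$ applied via $\mathrm{L}\cdot\mathrm{R}$ to a middle block absorbed into the bounded semigroup, then invoke Lemmas \ref{chi} and \ref{lrmk} to get $\chi\le 16C^2\|M^m\|_\chi$. The paper phrases this via the sets $G_n$ and extracts an explicit $(1/2)^k$ decay, but the mechanism is identical.
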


\begin{proof}
Let $G_{n}=\cup\left\{  M^{k}/K\left(  X\right)  :k>n\right\}  $ for each
$n\geq0$. As $L_{M^{k}/K\left(  X\right)  }R_{M^{k}/K\left(  X\right)  }$ is a
precompact set in $B\left(  B\left(  X\right)  /K\left(  X\right)  \right)  $,
then, by Lemmas \ref{chi} and \ref{lrmk},
\begin{align*}
\chi\left(  G_{2k}\right)   &  =\chi\left(  \left(  M^{k}/K\left(  X\right)
\right)  G_{0}\left(  M^{k}/K\left(  X\right)  \right)  \right)  =\chi\left(
\mathrm{L}_{M^{k}/K\left(  X\right)  }\mathrm{R}_{M^{k}/K\left(  X\right)
}G_{0}\right) \\
&  \leq\left\Vert \mathrm{L}_{M^{k}/K\left(  X\right)  }\mathrm{R}%
_{M^{k}/K\left(  X\right)  }\right\Vert _{\chi}\left\Vert G_{0}\right\Vert
\leq16\left\Vert M^{k}\right\Vert _{\chi}\left\Vert M^{k}/K\left(  X\right)
\right\Vert \left\Vert G_{0}\right\Vert \\
&  \leq\left(  16\left\Vert G_{0}\right\Vert ^{2}\right)  \left\Vert
M^{k}\right\Vert _{\chi}.
\end{align*}
As $\rho_{\mathsf{\chi}}\left(  M\right)  <1$, there is $m>0$ such that
$\left\Vert M^{m}\right\Vert _{\chi}<1/2$. Then for $n>2km$, we have that
\[
\chi\left(  G_{n}\right)  \leq\chi\left(  G_{2km}\right)  \leq\left(
16\left\Vert G_{0}\right\Vert ^{2}\right)  \left(  1/2\right)  ^{k}%
\rightarrow0\text{ under }k\rightarrow\infty.
\]
This shows that $\chi\left(  G_{n}\right)  \rightarrow0$ under $n\rightarrow
0$. As SG$\left(  M/K\left(  X\right)  \right)  \backslash G_{n}$ is
precompact,
\[
\chi\left(  \text{SG}\left(  M/K\left(  X\right)  \right)  \right)
=\chi\left(  G_{n}\right)
\]
for every $n$. Therefore $\chi\left(  \text{SG}\left(  M/K\left(  X\right)
\right)  \right)  =0$, i.e. SG$\left(  M/K\left(  X\right)  \right)  $ is precompact.
\end{proof}

Let $A$ be a Banach algebra and $M\subseteq$ $A$. Let LIM$\left(  M\right)  $
be the set of limit points of all sequences $\left(  a_{k}\right)  $ with
$a_{k}\in M^{n_{k}}$, $n_{k}\rightarrow\infty$ when $k\rightarrow\infty$. It
follows from \cite[Corollary 6.12]{ShT00} that\textit{ if }$\rho\left(
M\right)  =1$\textit{ and }SG$\left(  M\right)  $\textit{ is precompact then
}LIM$\left(  M\right)  =$ LIM$\left(  M\right)  ^{2}$\textit{ and it has a
non-zero idempotent.} We will use this fact in the proof of Theorem \ref{morr}
(the part Case 1).

An element $a\in A$ is called $n$\textit{-leading} for $M$ if $a\in M^{n}$ and
$\left\Vert a\right\Vert \geq\left\Vert \cup_{k<n}M^{k}\right\Vert $; a
sequence $\left(  a_{k}\right)  \subseteq A$ is called \textit{leading} for
$M$, if $a_{k}$ is $n_{k}$-leading for $M$, where $n_{k}\rightarrow\infty$,
and $\left\Vert a_{k}\right\Vert \rightarrow\infty$ under $k\rightarrow\infty$.

Let ld$^{n}\left(  M\right)  $ be the set of all $n$-leading elements for $M$,
ld$\left(  M\right)  =\cup_{n\geq2}$ld$^{n}\left(  M\right)  $ and
ld$_{\left[  1\right]  }\left(  M\right)  =\left\{  a/\left\Vert a\right\Vert
\text{: }a\in\text{ld}\left(  M\right)  \right\}  $.

\begin{lemma}
\label{ldpre} Let $M$ be a precompact set of $B\left(  X\right)  $. If
$\left\Vert M^{m}\right\Vert _{\chi}\left\Vert M^{m}/K\left(  X\right)
\right\Vert <1$, for some $m>0$ then $\mathrm{ld}_{\left[  1\right]  }\left(
M/K\left(  X\right)  \right)  $ is precompact.
\end{lemma}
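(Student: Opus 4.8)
The plan is to mimic the proof of Proposition~\ref{mpre}, replacing the full semigroup $\mathrm{SG}(M/K(X))$ by the set $\mathrm{ld}_{[1]}(M/K(X))$ of normalized leading elements, and to exploit the multiplicative structure of leading elements. The key observation is that an $n$-leading element $a$ for a set $N$ with $n\geq 4$ can be factored as $a=b\,c\,d$ with $b\in N^{k}$, $d\in N^{k}$ and $c\in N^{n-2k}$ for a suitable $k\approx n/4$, so that $\|b\|\geq\|a\|^{1/4}$-type lower bounds hold; more precisely, since $a$ is $n$-leading, each of its ``sub-blocks'' from the left and from the right has norm comparable to a power of $\|a\|$. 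Writing $N=M/K(X)$, the point is that the normalized element $a/\|a\|$ then lies in the set $\mathrm{L}_{N^{k}}\mathrm{R}_{N^{k}}$ applied to an element of $N^{n-2k}/\|a\|$, which has norm $\le 1$ because $a$ is leading.

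First I would fix $m$ with $\theta:=\|M^{m}\|_{\chi}\,\|M^{m}/K(X)\|<1$, set $N=M/K(X)$, and reduce to controlling, for large $n$, the $\chi$-measure of the set $\mathrm{ld}^{n}_{[1]}(N)$ of normalizations of $n$-leading elements. I would split any large index $n$ as $n=km+km+s$ with $0\le s<2m$ (so $k\to\infty$ as $n\to\infty$) and factor an $n$-leading element $a$ accordingly as $a=b\,c\,d$ with $b,d\in N^{km}=(N^{m})^{k}$ and $c\in N^{s}$. Because $a$ is $n$-leading, $\|c\|\le\|N^{s}\|\le\|a\|$ (using $s<n$), so $c/\|a\|$ lies in a fixed bounded set $W_{0}:=\bigcup_{s<2m}N^{s}/(\text{lower norms})$, precompact since $M$ is precompact; and $a/\|a\|=\mathrm{L}_{b}\mathrm{R}_{d}(c/\|a\|)$ with $b,d\in(N^{m})^{k}$. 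Hence
\[
\chi\bigl(\mathrm{ld}^{n}_{[1]}(N)\bigr)\le\bigl\|\mathrm{L}_{(N^{m})^{k}}\mathrm{R}_{(N^{m})^{k}}\bigr\|_{\chi}\,\|W_{0}\|.
\]

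Next I would iterate Lemma~\ref{lrmk} in the multiplicative form. Since $\mathrm{L}_{(N^{m})^{k}}\mathrm{R}_{(N^{m})^{k}}=\bigl(\mathrm{L}_{N^{m}}\mathrm{R}_{N^{m}}\bigr)^{k}$ and the Hausdorff seminorm is submultiplicative on products of multiplication operators while also $\|\mathrm{L}_{P}\mathrm{R}_{Q}\|\le\|\mathrm{L}_{P}\mathrm{R}_{Q}\|$ bounds hold, a $k$-fold application of Lemma~\ref{lrmk} together with $\|(N^{m})^{j}/K(X)\|\le\|N^{m}\|^{j}$ gives a bound of the shape
\[
\bigl\|\mathrm{L}_{(N^{m})^{k}}\mathrm{R}_{(N^{m})^{k}}\bigr\|_{\chi}\le C^{k}\bigl(\|M^{m}\|_{\chi}\,\|M^{m}/K(X)\|\bigr)^{k}\cdot(\text{polynomial factors})
\]
for an absolute constant $C$ (coming from the constant $16$ in Lemma~\ref{lrmk}); after possibly passing from $m$ to a larger multiple $m'$ we may arrange that the constant is absorbed and the base $\theta'<1$. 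Therefore $\chi(\mathrm{ld}^{n}_{[1]}(N))\to 0$ as $n\to\infty$. Finally, since $M$ is precompact, $\mathrm{ld}^{n}_{[1]}(N)$ for each fixed bounded range of $n$ is precompact, so $\mathrm{ld}_{[1]}(N)=\bigcup_{n}\mathrm{ld}^{n}_{[1]}(N)$ has $\chi$-measure equal to $\limsup_{n}\chi(\mathrm{ld}^{n}_{[1]}(N))=0$, whence $\mathrm{ld}_{[1]}(M/K(X))$ is precompact.

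**The main obstacle** will be the bookkeeping in the factorization step: ensuring that when an $n$-leading element is split into three blocks, the middle block (and the normalization by $\|a\|$) stays inside a genuinely fixed precompact set independent of $n$, and that the constant $C^{k}$ from iterating Lemma~\ref{lrmk} is truly beaten by $\theta^{k}$ — this forces one to first replace $m$ by a large multiple so that $\|M^{m}\|_{\chi}$ is small enough to swallow the geometric factor $C^{k}$. One also has to be slightly careful that $\|\mathrm{L}_{T}\mathrm{R}_{S}\|_{\chi}$ behaves submultiplicatively under composition of such operators, which follows from the general inequality $\|UV\|_{\chi}\le\|U\|_{\chi}\|V\|$ applied with $U,V$ themselves multiplication operators whose ordinary norms are controlled by $\|N^{m}\|$.
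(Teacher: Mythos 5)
Your overall strategy matches the paper's: factor a leading element as $a=bcd$ with the outer blocks long, normalize the middle block into the unit ball, and apply Lemmas~\ref{chi} and~\ref{lrmk}. However two steps need repair, one minor and one substantive.

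\textbf{The iteration step.} You propose to bound $\bigl\Vert \mathrm{L}_{(N^{m})^{k}}\mathrm{R}_{(N^{m})^{k}}\bigr\Vert_{\chi}$ by a $k$-fold application of Lemma~\ref{lrmk}, citing the inequality $\Vert UV\Vert_{\chi}\leq\Vert U\Vert_{\chi}\Vert V\Vert$. But this inequality only yields \emph{one} $\chi$-factor after iteration, giving a bound of the shape $16\,\Vert M^{m}\Vert_{\chi}\Vert M^{m}/K(X)\Vert^{2k-1}$, which need not decay since the hypothesis controls only the product $\Vert M^{m}\Vert_{\chi}\Vert M^{m}/K(X)\Vert$ and not $\Vert M^{m}/K(X)\Vert$ separately. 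To obtain the $t^{k}$ decay you claimed, either invoke the stronger submultiplicativity $\Vert UV\Vert_{\chi}\leq\Vert U\Vert_{\chi}\Vert V\Vert_{\chi}$ of the Hausdorff seminorm, or (cleaner, and the paper's route) apply Lemma~\ref{lrmk} \emph{once} to the set $M^{km}$, noting $\Vert M^{km}\Vert_{\chi}\leq\Vert M^{m}\Vert_{\chi}^{k}$ and $\Vert M^{km}/K(X)\Vert\leq\Vert M^{m}/K(X)\Vert^{k}$. The latter also produces the constant $16$ only once rather than $16^{k}$, making your auxiliary step of enlarging $m$ unnecessary.

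\textbf{The aggregation step} contains the real gap. You bound $\chi\bigl(\mathrm{ld}^{n}_{[1]}(N)\bigr)$ individually and then assert that $\chi\bigl(\bigcup_{n}\mathrm{ld}^{n}_{[1]}(N)\bigr)=\limsup_{n}\chi\bigl(\mathrm{ld}^{n}_{[1]}(N)\bigr)$. This is false: in a Hilbert space with orthonormal basis $(e_{n})$, the sets $A_{n}=\{e_{n}\}$ satisfy $\chi(A_{n})=0$ for each $n$, yet $\chi\bigl(\bigcup A_{n}\bigr)>0$. What the paper does instead is bound $\chi(G_{n})$ for the \emph{tails} $G_{n}=\{a/\Vert a\Vert:a\in\mathrm{ld}^{i}(M/K(X)),\ i\geq n\}$, and then use that $\mathrm{ld}_{[1]}\setminus G_{n}$ is precompact (a finite union), so $\chi(\mathrm{ld}_{[1]})=\chi(G_{n})$ for every $n$. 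The reason the factorization bounds the whole tail at once is that when $n=2km+j$ you may take the two outer blocks in $(M^{m})^{k}/K(X)$ and let the middle block lie in $M^{i-2km}/K(X)$ for \emph{any} $i\geq n$, not just those with $i-2km<2m$; its normalization still lands in the unit ball $B_{(1)}$ because $a$ is $i$-leading and $i-2km<i$. Your restriction of the middle block to length $s<2m$ forces $k$ to grow with $i$, and this is precisely what destroys the uniform control over the tail.
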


\begin{proof}
Let $G_{n}=\left\{  a/\left\Vert a\right\Vert \text{: }a\in\text{ld}%
^{i}\left(  M/K\left(  X\right)  \right)  ,i\geq n\right\}  $ for any $n>0$.
Let $n=2km+j$, where $0\leq j<2m$. Then, for $N=M^{m}$ and $B_{\left(
1\right)  }=\left(  B\left(  X\right)  /K\left(  X\right)  \right)  _{\odot}$,
we obtain that%
\begin{equation}
G_{n}\subseteq\left(  N^{k}/K\left(  X\right)  \right)  B_{\left(  1\right)
}\left(  N^{k}/K\left(  X\right)  \right)  . \label{gn}%
\end{equation}
Indeed, if $T/K\left(  X\right)  \in$ ld$^{i}\left(  M/K\left(  X\right)
\right)  $ where $i\geq n$, then
\[
T=T_{1}T_{2}T_{3}/K\left(  X\right)
\]
for some $T_{1}/K\left(  X\right)  ,T_{3}/K\left(  X\right)  \in
N^{k}/K\left(  X\right)  $ and $T_{2}/K\left(  X\right)  \in M^{i-2km}%
/K\left(  X\right)  $. As $T/K\left(  X\right)  $ is an $i$-leading element
for $M/K\left(  X\right)  ,$ then
\[
\left\Vert T_{2}/K\left(  X\right)  \right\Vert \leq\left\Vert T/K\left(
X\right)  \right\Vert .
\]
This proves $\left(  \ref{gn}\right)  $.

Let $t=\left\Vert N\right\Vert _{\chi}\left\Vert N/K\left(  X\right)
\right\Vert $. As $N^{k}/K\left(  X\right)  $ is a precompact set, we get from
Lemmas \ref{chi} and \ref{lrmk} that
\begin{align*}
\chi\left(  G_{n}\right)   &  \leq\chi\left(  \text{L}_{N^{k}/K\left(
X\right)  }\text{R}_{N^{k}/K\left(  X\right)  }B_{\left(  1\right)  }\right)
\leq\left\Vert \text{L}_{N^{k}/K\left(  X\right)  }\text{R}_{N^{k}/K\left(
X\right)  }\right\Vert _{\chi}\\
&  \leq16\left\Vert N^{k}\right\Vert _{\chi}\left\Vert N^{k}/K\left(
X\right)  \right\Vert \leq16\left(  \left\Vert N\right\Vert _{\chi}\left\Vert
N/K\left(  X\right)  \right\Vert \right)  ^{k}=16t^{k}%
\end{align*}
whence $\chi\left(  G_{n}\right)  \rightarrow 0$ under $n \rightarrow\infty$. As
ld$_{\left[  1\right]  }\left(  M/K\left(  X\right)  \right)  \backslash
G_{n}$ is precompact,
\[
\chi\left(  \text{ld}_{\left[  1\right]  }\left(  M/K\left(  X\right)
\right)  \right)  =\chi\left(  G_{n}\right)
\]
for every $n$. Therefore $\chi\left(  \text{ld}_{\left[  1\right]  }\left(
M/K\left(  X\right)  \right)  \right)  =0$, i.e., ld$_{\left[  1\right]
}\left(  M/K\left(  X\right)  \right)  $ is a precompact set.
\end{proof}

\subsection{Hausdorff radius equals essential radius}

\begin{theorem}
\label{morr} Let $M$ be a precompact subset of $B\left(  X\right)  $. Then
\begin{equation}
\rho_{e}\left(  M\right)  =\rho_{\mathsf{\chi}}\left(  M\right)  .\label{mor}%
\end{equation}

\end{theorem}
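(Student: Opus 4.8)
The inequality $\rho_{\chi}(M) \le \rho_{e}(M)$ is already recorded in \eqref{in}, so the whole task is to prove $\rho_{e}(M) \le \rho_{\chi}(M)$. The plan is to reduce to the normalized case and then extract an idempotent in the Calkin algebra, using the precompactness results just established. Precisely: suppose for contradiction that $\rho_{e}(M) > \rho_{\chi}(M)$. Rescaling $M$ by a positive constant (which multiplies both $\rho_{e}$ and $\rho_{\chi}$ by that constant), we may arrange that $\rho_{\chi}(M) < 1 \le \rho_{e}(M)$; in fact I would aim to normalize so that $\rho_{e}(M/K(X)) = \rho(M/K(X)) = 1$ while keeping $\rho_{\chi}(M) < 1$. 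Note $M/K(X)$ is precompact in the Calkin algebra (continuous image of a precompact set), so all of Proposition \ref{mpre}, Lemma \ref{ldpre} and the idempotent fact from \cite[Corollary 6.12]{ShT00} are available for $M/K(X)$.

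The argument then splits according to whether $\mathrm{SG}(M/K(X))$ is bounded. \textbf{Case 1: $\mathrm{SG}(M/K(X))$ is bounded.} Since $\rho_{\chi}(M) < 1$, Proposition \ref{mpre} gives that $\mathrm{SG}(M/K(X))$ is precompact. Combined with $\rho(M/K(X)) = 1$, the cited consequence of \cite[Corollary 6.12]{ShT00} yields that $\mathrm{LIM}(M/K(X)) = \mathrm{LIM}(M/K(X))^2$ contains a non-zero idempotent $e$ in $B(X)/K(X)$. A non-zero idempotent has spectral radius $1$, and since $e$ is a limit of elements $a_k/K(X)$ with $a_k \in M^{n_k}$, $n_k \to \infty$, and $\|a_k/K(X)\|$ bounded, one sees $\rho(e) \le r(M/K(X))$; hence $r(M/K(X)) \ge 1$. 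But then by \eqref{GBWFe} applied in $B(X)$ — or more safely by the Banach-algebraic version applied to the Calkin algebra — one would push this back to a contradiction, OR one argues directly that $r(M/K(X)) \ge 1$ already forces... here I would instead observe that the idempotent $e \ne 0$ in $B(X)/K(X)$ lifts the contradiction: $1 = \rho(e) \le \rho(M/K(X)) = 1$ is fine, so the real content must be that $e = 0$, contradicting non-triviality. Concretely: the Hausdorff-radius smallness should force every limit point of normalized high powers to be compact, i.e. zero in the Calkin algebra.

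\textbf{Case 2: $\mathrm{SG}(M/K(X))$ is unbounded.} Then there is a leading sequence $(a_k)$ for $M/K(X)$, with $a_k$ being $n_k$-leading, $n_k \to \infty$, $\|a_k\| \to \infty$. The normalized elements $b_k := a_k/\|a_k\| \in \mathrm{ld}_{[1]}(M/K(X))$. Choose $m$ with $\|M^m\|_\chi < 1$; since we may also assume (after the normalization) that $\|M^m/K(X)\|$ is controlled, Lemma \ref{ldpre} applies once $\|M^m\|_\chi \, \|M^m/K(X)\| < 1$, giving that $\mathrm{ld}_{[1]}(M/K(X))$ is precompact. So $(b_k)$ has a convergent subsequence with limit $b$, $\|b\| = 1$. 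On the other hand, splitting $a_k = a_k' a_k'' a_k'''$ through the leading structure as in the proof of Lemma \ref{ldpre} and using the idempotent-extraction machinery on the semigroup generated by a suitable bounded piece, one produces a non-zero idempotent again, whose existence contradicts $\rho_\chi(M) < 1$ via the estimate $\|\mathrm{L}_{M^k/K}\mathrm{R}_{M^k/K}\|_\chi \le 16\|M^k\|_\chi\|M^k/K\|$ of Lemma \ref{lrmk}, which forces the relevant products to be asymptotically compact.

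\textbf{Main obstacle.} The delicate point is the passage "non-zero idempotent in the Calkin algebra $\Rightarrow$ contradiction with $\rho_\chi(M) < 1$." An idempotent coming from $\mathrm{LIM}(M/K(X))$ is a Calkin-algebra limit of normalized long products from $M$; one must show that $\rho_\chi(M) < 1$ forces such limits to vanish modulo $K(X)$. The mechanism is exactly Lemma \ref{lrmk}: $\|\mathrm{L}_{M^n/K(X)}\mathrm{R}_{M^n/K(X)}\|_\chi \le 16\|M^n\|_\chi\|M^n/K(X)\| \to 0$ along a subsequence, forcing any limit point $e$ to satisfy $e = \lim c e d$ with $\|\mathrm{L}_c \mathrm{R}_d\|_\chi$ small, hence $e$ approximable by compacts, i.e. $e = 0$. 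Making this rigorous — tracking which powers $n_k$ one evaluates the $\chi$-norm at, and ensuring the "middle factor" in the leading decomposition stays norm-bounded so the products genuinely have small Hausdorff measure — is the technical heart, and is precisely what Case 1 / Case 2 of the author's proof presumably carry out separately.
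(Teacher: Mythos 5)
Your framework is exactly the paper's: normalize to $\rho_{e}(M)=1$, split on whether $\mathrm{SG}(M/K(X))$ is bounded, invoke Proposition~\ref{mpre} in Case~1 and Lemma~\ref{ldpre} in Case~2, and then derive a contradiction. But in both cases you stop short of the actual closing argument, and the routes you sketch for it do not work.

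In Case~1 your first attempt claims $\rho(e)\le r(M/K(X))$ because $e$ is a limit of elements $a_k/K(X)\in(M/K(X))^{n_k}$. This inference fails: the spectral radius is only upper semicontinuous, so from $a_k/K(X)\to e$ one gets $\limsup_k\rho(a_k/K(X))\le\rho(e)$, the reverse of what you need. Indeed, ``limit of small-spectral-radius elements has small spectral radius'' is precisely the nontrivial content of Berger--Wang type theorems, and cannot be assumed. Your pivot to ``Hausdorff smallness forces limit points to vanish in the Calkin algebra'' is the right idea, but the mechanism in your final paragraph is not the one that makes it work. Lemma~\ref{lrmk} is used in the paper only to prove precompactness (inside Proposition~\ref{mpre} and Lemma~\ref{ldpre}); it plays no role in the concluding step. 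The paper's Case~1 closing is a direct estimate: if $T_k/K(X)\to T/K(X)$ with $T_k\in M^{n_k}$, $n_k\to\infty$, then $\|T_k-T\|_\chi\le\|T_k-T\|_e\to 0$ while $\|T_k\|_\chi\le\|M^{n_k}\|_\chi\to 0$ (because $\rho_\chi(M)<1$), whence $\|T\|_\chi=0$, $T$ is compact, and $\mathrm{LIM}(M/K(X))=(0)$ --- contradicting the nonzero idempotent supplied by \cite[Corollary 6.12]{ShT00}.

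In Case~2 you correctly produce the limit point $b=S/K(X)$ with $\|b\|=1$ via Lemma~\ref{ldpre}, but then propose an unnecessary and unspecified ``idempotent-extraction machinery''. No idempotent is needed here. The contradiction comes again from a direct $\chi$-norm estimate: with $a_k=T_k/K(X)$ a leading sequence and $b$ a limit of $a_k/\|a_k\|$,
\[
\|S\|_\chi\le\bigl\|b-a_k/\|a_k\|\bigr\|+\|T_k\|_\chi/\|a_k\|,
\]
where the first term tends to $0$ by convergence in the Calkin norm, and the second tends to $0$ because $\rho_\chi(M)<1$ keeps $\{\|T_k\|_\chi\}$ bounded while $\|a_k\|\to\infty$ (this is what being a leading sequence with unbounded norms gives you). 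Thus $S$ is compact and $b=0$, contradicting $\|b\|=1$. Your ``main obstacle'' paragraph --- $\|\mathrm{L}_c\mathrm{R}_d\|_\chi$ small forcing $e=\lim ced$ to be ``approximable by compacts'' --- does not cleanly deliver either conclusion: a small $\chi$-norm of the two-sided multiplication says the image of the unit ball has small Hausdorff measure, which does not in itself bound the norm (or compactness) of a single fixed output $ced$. The scalar estimates above, not the multiplication-operator estimates of Lemma~\ref{lrmk}, are what finish the proof.
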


\begin{proof}
Let $\rho_{e}\left(  M\right)  =1$. Assume, aiming at the contrary, that
$\rho_{\mathsf{\chi}}\left(  M\right)  <1$. We consider two cases.

\textbf{Case 1.} {\large $\mathrm{SG}\left(  M/K\left(  X\right)  \right)  $
}\textit{is bounded. }

By Proposition \ref{mpre}, SG$\left(  M/K\left(  X\right)  \right)  $ is
precompact. As $\rho\left(  M/K\left(  X\right)  \right)  =1$, then
$\mathrm{LIM}\left(  M/K\left(  X\right)  \right)  $ has a non-zero idempotent
by \cite[Corollary 6.12]{ShT00}. On the other hand, let $T\in$ SG$\left(
M\right)  $ be an arbitrary operator such that $T/K\left(  X\right)
\in\mathrm{LIM}\left(  M/K\left(  X\right)  \right)  $. Then there is a
sequence $\left(  T_{k}\right)  $ with $T_{k}/K\left(  X\right)  \in\left(
M/K\left(  X\right)  \right)  ^{n_{k}}$ for $n_{k}\rightarrow\infty$ and
$T_{k}/K\left(  X\right)  \rightarrow T/K\left(  X\right)  $ under
$k\rightarrow\infty$. Hence $\left\Vert T_{k}-T\right\Vert _{\chi}%
\rightarrow0$ under $k\rightarrow\infty$. As $q:=\rho_{\mathsf{\chi}}\left(
M\right)  <1$ then $\left\Vert T_{k}\right\Vert _{\chi}\leq q^{n_{k}%
}\rightarrow0$ under $n_{k}\rightarrow\infty$. So $T$ is a compact operator.
Hence $\mathrm{LIM}\left(  M/K\left(  X\right)  \right)  =\left(  0\right)  $,
a contradiction. This shows that $\rho_{\mathsf{\chi}}\left(  M\right)
=\rho_{e}\left(  M\right)  $ holds in Case 1.

\textbf{Case 2.} {\large $\mathrm{SG}\left(  M/K\left(  X\right)  \right)  $
}\textit{is not bounded. }

It follows easily from definition that in this case there exists a leading
sequence for $M/K\left(  X\right)  $. Let $\left(  T_{k}/K\left(  X\right)
\right)  _{k=1}^{\infty}$ be such a sequence. For brevity, set $a_{k}%
=T_{k}/K\left(  X\right)  $ for each $k$. Then
\[
G:=\left\{  a_{k}/\left\Vert a_{k}\right\Vert \text{: }k\in\mathbb{N}\right\}
\subseteq\text{ ld}_{\left[  1\right]  }\left(  M/K\left(  X\right)  \right)
.
\]
Let $\rho_{\mathsf{\chi}}\left(  M\right)  =t_{1}<1$ and $t_{1}<t_{2}<1$. It
follows from the condition $\rho_{e}\left(  M\right)  =1$ that, for any
$\varepsilon>0$ with $t_{2}\left(  1+\varepsilon\right)  <1$, there is
$n_{1}>0$ such that $\left\Vert M^{n}/K\left(  X\right)  \right\Vert
^{1/n}<1+\varepsilon$ for all $n>n_{1}$, and also there is $n_{2}>0$ such that
$\left\Vert M^{n}\right\Vert _{\chi}^{1/n}<t_{2}$. Then
\[
\left\Vert M^{n}\right\Vert _{\chi}\left\Vert M^{n}/K\left(  X\right)
\right\Vert <\left(  t_{2}\left(  1+\varepsilon\right)  \right)  ^{n}<1
\]
for any $n>\max\left\{  n_{1},n_{2}\right\}  $. By Lemma \ref{ldpre}, $G$ is
precompact. Let $b:=S/K\left(  X\right)  $ be a limit point of $G$. One may
assume that
\[
\left\Vert b-a_{k}/\left\Vert a_{k}\right\Vert \right\Vert \rightarrow0\text{
under }k\rightarrow\infty.
\]
It is clear that $\left\Vert b\right\Vert =1$. We have%
\begin{align}
\left\Vert S\right\Vert _{\chi}  &  \leq\left\Vert S-T_{k}/\left\Vert
a_{k}\right\Vert \right\Vert _{\chi}+\left\Vert T_{k}/\left\Vert
a_{k}\right\Vert \right\Vert _{\chi}\label{sx}\\
&  \leq\left\Vert b-a_{k}/\left\Vert a_{k}\right\Vert \right\Vert +\left\Vert
T_{k}\right\Vert _{\chi}/\left\Vert a_{k}\right\Vert .\nonumber
\end{align}

As $\rho_{\mathsf{\chi}}\left(  M\right)  <1$, $\left\{  \left\Vert
T_{k}\right\Vert _{\chi}\text{: }k\in\mathbb{N}\right\}  $ is a bounded set.
As $\left\Vert a_{k}\right\Vert \longrightarrow_{k}\infty,$ we get $\left\Vert
T_{k}\right\Vert _{\chi}/\left\Vert a_{k}\right\Vert \rightarrow_{k}0.$ We
obtain from $\left(  \ref{sx}\right)  $ that $\left\Vert S\right\Vert _{\chi
}=0$, i.e., $S$ is a compact operator. Hence immediately $b=0$, a
contradiction. Thus $\rho_{e}\left(  M\right)  =\rho_{\mathsf{\chi}}\left(
M\right)  $ in any case.
\end{proof}



\section{Banach-algebraic approach to the joint spectral radius formulas}

\subsection{$BW$-ideals}

Now we present the Banach-algebraic approach to the formulas for the joint
spectral radius. Let us consider a Banach algebra $A$ instead of $B\left(
X\right)  $. Let $BW\left(  A\right)  $ denote the set of all closed ideals
$J$ of $A$ such that
\begin{equation}
\rho(M)=\max\{\rho(M/J),r(M)\}\text{ for all precompact }M\subseteq A.
\label{GBWJ}%
\end{equation}

The ideals $J$ for which (\ref{GBWJ}) holds are called $BW$\textit{-ideals}.
Clearly, if $I\subset J$, $J\in BW(A)$ then $I\in BW(A)$. It is known that $BW(A)$
has maximal elements; moreover it was proved in \cite[Lemma 5.2]{ShT12} that
if $J=\overline{\cup{J_{\lambda}}}$ where $(J_{\lambda})$ is a linearly
ordered set of $BW$-ideals of $A$ then $J\in BW(A)$.

Let us call an increasing transfinite sequence $(J_{\alpha})_{\alpha\leq
\gamma}$ of closed ideals in a Banach algebra $A$ an\textit{ increasing
transfinite\textbf{ } chain of closed ideals} if $J_{\beta}=\overline
{\cup_{\alpha<\beta}J_{\alpha}}$ for any limit ordinal $\beta\leq\gamma$, and
a decreasing transfinite sequence $(I_{\alpha})_{\alpha\leq\gamma}$ -- a
\textit{decreasing transfinite\textbf{ }chain of closed ideals} if $I_{\beta
}=\cap_{\alpha<\beta}I_{\alpha}$ for any limit ordinal $\beta\leq\gamma$.

By \cite{ShT12}, if $I\subset J$ are closed ideals of $A$, $I\in BW(A)$ and
$J/I\in BW(A/I)$ then $J\in BW(A)$. This implies the transfinite stability for
$BW$-ideals.

\begin{proposition}
\label{ESBW} If in increasing transfinite\textbf{ } chain $(J_{\alpha
})_{\alpha\leq\gamma}$ of closed ideals in a Banach algebra $A$ the ideal
$J_{0}$ belongs to $BW(A)$ and $J_{\alpha+1}/J_{\alpha}\in BW(A/J_{\alpha})$,
for all $\alpha$, then $J_{\gamma}\in BW(A)$.
\end{proposition}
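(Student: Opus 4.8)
The plan is to prove this by transfinite induction on $\gamma$, using the two facts recalled just above: first, the two-step stability result from \cite{ShT12} that if $I\subset J$ are closed ideals of $A$ with $I\in BW(A)$ and $J/I\in BW(A/I)$, then $J\in BW(A)$; and second, the limit-stability result from \cite[Lemma 5.2]{ShT12} that the closure of the union of a linearly ordered family of $BW$-ideals is again a $BW$-ideal. The statement to be proved is precisely the transfinite strengthening obtained by iterating these two facts along the chain.

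\medskip

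First I would set up the induction hypothesis in the right form: for every ordinal $\delta\leq\gamma$, the ideal $J_\delta$ belongs to $BW(A)$. The base case $\delta=0$ is the hypothesis $J_0\in BW(A)$. For a successor step $\delta=\alpha+1$, I would invoke the two-step result with $I=J_\alpha$ and $J=J_{\alpha+1}$: by the induction hypothesis $J_\alpha\in BW(A)$, and by assumption $J_{\alpha+1}/J_\alpha\in BW(A/J_\alpha)$, so $J_{\alpha+1}\in BW(A)$. For a limit ordinal $\delta=\beta$, the defining property of an increasing transfinite chain gives $J_\beta=\overline{\cup_{\alpha<\beta}J_\alpha}$; by the induction hypothesis each $J_\alpha$ with $\alpha<\beta$ is a $BW$-ideal, and the family $(J_\alpha)_{\alpha<\beta}$ is linearly ordered by inclusion, so \cite[Lemma 5.2]{ShT12} yields $J_\beta\in BW(A)$. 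Applying this at $\delta=\gamma$ gives the conclusion.

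\medskip

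The one point that needs a little care — and is, I think, the only genuine obstacle — is the successor step when it is phrased via $A/J_\alpha$ rather than purely inside $A$: one must make sure the hypothesis ``$J_{\alpha+1}/J_\alpha\in BW(A/J_\alpha)$'' is exactly the input the two-step lemma requires, i.e.\ that the quotient $A/J_\alpha$ is the relevant Banach algebra and $J_{\alpha+1}/J_\alpha$ its closed ideal (which it is, since $J_\alpha\subseteq J_{\alpha+1}$ are closed ideals). Beyond that, the argument is a routine transfinite induction, and there is no additional estimate or construction to perform; all the analytic content is already packaged in the two cited results. I would therefore keep the write-up short, stating the induction on $\delta\leq\gamma$ and checking the three cases (zero, successor, limit) in one paragraph each, concluding with $\delta=\gamma$.
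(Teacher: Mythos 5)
Your proof is correct and matches the paper's intended argument exactly: the paper simply asserts that the two cited facts from \cite{ShT12} (the two-step extension property and the limit-stability of $BW$-ideals under directed unions) imply the transfinite statement, and your transfinite induction with base, successor, and limit cases is precisely the straightforward way to spell that out. No gaps.
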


Every $BW$-ideal $J$ of a Banach algebra turns out to be a
\textit{Berger-Wang algebra}in the sense that the equality%
\begin{equation}
\rho\left(  M\right)  =r\left(  M\right)  \label{bwa}%
\end{equation}
holds for any precompact set $M$ of $J$. It follows from (\ref{bwa}) and
\cite[Proposition 3.5]{T85} that \textit{ every semigroup consisting of
quasinilpotent elements of a Berger-Wang algebra generates a finitely
quasinilpotent subalgebra.}

Since the Jacobson radical $\mathrm{Rad}\left(  A\right)  $ of every Banach
algebra $A$ consists of quasinilpotents, then for a Berger-Wang Banach algebra
$A$, $\mathrm{Rad}\left(  A\right)  $ \textit{is compactly quasinilpotent,
i.e., }$\rho\left(  M\right)  =0$ for any precompact set $M$ of
$\mathit{\mathrm{Rad}}\left(  A\right)  $.

\subsection{First Banach-algebraic formulas for the joint spectral radius}

A natural analogue of compact operators in the Banach algebra context was
proposed by K. Vala \cite{Vala64} who proved that the map $T\mapsto
S_{1}TS_{2}$ on the algebra $B(X)$ is compact if and only the operators
$S_{1}$ and $S_{2}$ are compact. So an element $a$ of a normed algebra $A$ is
called\textit{ compact} (\textit{finite rank}) if the operator L$_{a}$R$_{a}$:
$x\mapsto axa$ on $A$ is compact (finite rank). A normed algebra $A$ is called
\textit{bicompact} if all operators L$_{a}$R$_{b}$: $x\mapsto axb$ ($a,b\in
A$) are compact. An ideal of $A$ is called \textit{bicompact} if it is
bicompact as a normed algebra.

It follows from \cite[Corollary 4.8]{ShT12} that for every bicompact ideal $J$
of $A$ the equality (\ref{GBWJ}) holds. Since, by \cite{Vala64}, $K\left(
X\right)  $ is a bicompact ideal of $B\left(  X\right)  $, this result widely
extends the generalized $BW$-formula (\ref{GBWFe}) (which is the same as
(\ref{GBWF}) \ by\textbf{ }virtue of Theorem \ref{morr}\textbf{)}.

A normed algebra $A$ is called \textit{hypocompact} (\textit{hypofinite}) if
every non-zero quotient $A/J$ has a non-zero compact (finite rank) element. An
ideal is\textit{ hypocompact} (\textit{hypofinite}) if it is hypocompact
(hypofinite) as a normed algebra.

Each bicompact algebra is hypocompact, and any hypocompact ideal can be
composed from bicompact blocks:

\begin{proposition}
\cite[Proposition 3.8]{ShT12}\label{phc1} For any hypocompact closed ideal $I$
of a Banach algebra $A,$ there is a transfinite\textbf{ }increasing chain
$(J_{\alpha})_{\alpha\leq\gamma}$ of closed ideals of $A$ such that
$J_{1}=\left(  0\right)  $ and $J_{\gamma}=I,$ and every quotient space
$J_{\alpha+1}/J_{\alpha}$ is a bicompact ideal of $A/J_{\alpha}$.
\end{proposition}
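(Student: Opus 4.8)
The plan is to build the chain by transfinite recursion, taking as each successive block the closed ideal generated by a single compact element, and to lean on one lemma identifying such ideals as bicompact.

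\textbf{The key lemma.} Let $\tilde A$ be a Banach algebra, $\tilde I$ a closed ideal of $\tilde A$, and $a\in\tilde I$ a compact element of $\tilde I$ (so $\mathrm{L}_a\mathrm{R}_a$ is compact on $\tilde I$). I claim the closed two-sided ideal $K$ generated by $a$ in $\tilde A$ satisfies $K\subseteq\tilde I$ and is a bicompact normed algebra. The inclusion is immediate, since $\tilde I$ is a closed ideal containing $a$. Every element of $K$ is a norm-limit of finite sums $\sum_i u_i a v_i$ with $u_i,v_i$ in the unitization $\tilde A^{1}$. For two such finite sums $b=\sum_i u_i a v_i$ and $b'=\sum_j u'_j a v'_j$ and any $x\in K$,
\[
bxb'=\sum_{i,j}u_i\,a\,(v_i x u'_j)\,a\,v'_j=\sum_{i,j}u_i\,\big(\mathrm{L}_a\mathrm{R}_a\big)(v_i x u'_j)\,v'_j,
\]
so $\mathrm{L}_b\mathrm{R}_{b'}$ is a finite sum of operators of the form $\mathrm{L}_{u_i}\mathrm{R}_{v'_j}\circ(\mathrm{L}_a\mathrm{R}_a)\circ\mathrm{L}_{v_i}\mathrm{R}_{u'_j}$; the arguments $v_i x u'_j$ range over a bounded subset of $\tilde I$ (where $\mathrm{L}_a\mathrm{R}_a$ is compact) and the outer multiplications are bounded, so each such operator, and hence $\mathrm{L}_b\mathrm{R}_{b'}$, is compact. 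For arbitrary $b,b'\in K$, approximate them in norm by finite sums $b_n,b'_n$ of generators; then
\[
\|\mathrm{L}_{b_n}\mathrm{R}_{b'_n}-\mathrm{L}_b\mathrm{R}_{b'}\|\le\|b_n-b\|\,\|b'_n\|+\|b\|\,\|b'_n-b'\|\longrightarrow 0,
\]
so $\mathrm{L}_b\mathrm{R}_{b'}$ is a norm-limit of compact operators on $K$, hence compact. Thus $K$ is bicompact.

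\textbf{The recursion.} Put $J_1=(0)$. Suppose $J_\alpha$ is a closed ideal of $A$ with $J_\alpha\subseteq I$. If $J_\alpha=I$, set $\gamma=\alpha$ and stop. Otherwise $J_\alpha$ is a closed ideal of $I$ with $J_\alpha\neq I$, and since $I$ is hypocompact the non-zero quotient $I/J_\alpha$ has a non-zero compact element $\bar a$. Identifying $I/J_\alpha$ with the closed ideal it forms inside $A/J_\alpha$ (the two quotient norms coincide), apply the key lemma with $\tilde A=A/J_\alpha$, $\tilde I=I/J_\alpha$, $a=\bar a$: the closed ideal $\bar K$ generated by $\bar a$ in $A/J_\alpha$ is bicompact, lies in $I/J_\alpha$, and is non-zero. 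Let $J_{\alpha+1}=q_{J_\alpha}^{-1}(\bar K)$; this is a closed ideal of $A$ with $J_\alpha\subsetneq J_{\alpha+1}\subseteq I$ and $J_{\alpha+1}/J_\alpha=\bar K$ a bicompact ideal of $A/J_\alpha$. At a limit ordinal $\beta$, set $J_\beta=\overline{\cup_{\alpha<\beta}J_\alpha}$, again a closed ideal of $A$ contained in $I$. The resulting transfinite sequence is strictly increasing until it reaches $I$; being a strictly increasing chain of closed ideals of the set $A$, it cannot be defined for all ordinals, so the recursion halts at some $\gamma$ with $J_\gamma=I$. Then $(J_\alpha)_{\alpha\le\gamma}$ is the desired increasing transfinite chain of closed ideals, with $J_1=(0)$, $J_\gamma=I$, and each $J_{\alpha+1}/J_\alpha$ a bicompact ideal of $A/J_\alpha$.

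\textbf{Main obstacle.} The substance is the key lemma; the recursion is bookkeeping. Inside the lemma the delicate point is the passage from finite sums of generators to the whole closed ideal, i.e.\ the joint norm-continuity of $(b,b')\mapsto\mathrm{L}_b\mathrm{R}_{b'}$ displayed above, which lets compactness survive the closure. A secondary point, handled by phrasing the lemma relative to the pair $(\tilde A,\tilde I)$ rather than for a single algebra, is that ``compact element'' and ``bicompact'' are notions relative to the ambient algebra: one should not assume a compact element of $I/J_\alpha$ is automatically a compact element of $A/J_\alpha$, only that the products entering the computation stay inside $I/J_\alpha$, where $\mathrm{L}_{\bar a}\mathrm{R}_{\bar a}$ is known to be compact.
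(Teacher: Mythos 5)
Your proof is correct, and it takes the approach one would expect the cited proof in [ShT12] to take (the paper itself only cites the reference and gives no proof). The key lemma is handled carefully: you correctly observe that a compact element of $I/J_\alpha$ need only satisfy compactness of $\mathrm{L}_a\mathrm{R}_a$ on $I/J_\alpha$ itself, and the factorization $bxb'=\sum_{i,j}u_i\bigl(\mathrm{L}_a\mathrm{R}_a\bigr)(v_ixu_j')v_j'$ keeps the middle arguments inside $I/J_\alpha$ precisely because $K\subseteq\tilde I$ and $\tilde I$ is an ideal of the unitization; the joint norm-continuity of $(b,b')\mapsto\mathrm{L}_b\mathrm{R}_{b'}$ then carries compactness to all of $K$. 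The transfinite recursion is routine and correctly stops by a cardinality argument. One could add, for completeness, a word on why $q_{J_\alpha}(I)$ is a closed ideal of $A/J_\alpha$ with the quotient norm agreeing with that of $I/J_\alpha$ (it follows from $q_{J_\alpha}^{-1}\bigl(q_{J_\alpha}(I)\bigr)=I$ being closed and $J_\alpha\subseteq I$), but you flag the point and the justification is standard.
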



\begin{theorem}
\cite[Theorem 4.11]{ShT12}\label{hcf} The formula (\ref{GBWJ}) holds for every
hypocompact closed ideal $J$ of $A$.
\end{theorem}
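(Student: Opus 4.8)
The natural strategy is to combine the bicompact case — already available from \cite[Corollary 4.8]{ShT12} — with the transfinite composition series furnished by Proposition \ref{phc1}, and then invoke the transfinite stability of $BW$-ideals recorded in Proposition \ref{ESBW}. Concretely, given a hypocompact closed ideal $J$ of $A$, I would first apply Proposition \ref{phc1} to obtain an increasing transfinite chain $(J_\alpha)_{\alpha\le\gamma}$ of closed ideals of $A$ with $J_1=(0)$, $J_\gamma=J$, and each $J_{\alpha+1}/J_\alpha$ a bicompact ideal of $A/J_\alpha$. For the base of the induction, $J_0\subseteq J_1=(0)$ is the zero ideal, which trivially lies in $BW(A)$ since $\rho(M)=\max\{\rho(M/(0)),r(M)\}=\rho(M)$ holds vacuously. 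For the successor step, since $J_{\alpha+1}/J_\alpha$ is a bicompact ideal of the Banach algebra $A/J_\alpha$, the cited \cite[Corollary 4.8]{ShT12} gives that (GBWJ) holds for it, i.e. $J_{\alpha+1}/J_\alpha\in BW(A/J_\alpha)$.

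With these two ingredients in hand, Proposition \ref{ESBW} applies verbatim: the increasing transfinite chain $(J_\alpha)_{\alpha\le\gamma}$ has $J_0\in BW(A)$ and $J_{\alpha+1}/J_\alpha\in BW(A/J_\alpha)$ for every $\alpha$, hence $J_\gamma=J\in BW(A)$. Unwinding the definition of $BW$-ideal, this is exactly the assertion that (GBWJ) holds for $J$.

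The only genuinely delicate point is to make sure the limit-ordinal behaviour of the chain from Proposition \ref{phc1} matches the hypothesis of Proposition \ref{ESBW} — namely that $J_\beta=\overline{\cup_{\alpha<\beta}J_\alpha}$ at limit ordinals $\beta$ — and that the quotient formation is compatible across the two propositions (so that $J_{\alpha+1}/J_\alpha$ really is a closed ideal of $A/J_\alpha$ with the inherited norm). Both are built into the statements as quoted, so no extra argument is needed; one should merely remark that bicompactness of $J_{\alpha+1}/J_\alpha$ as a normed algebra is precisely what \cite[Corollary 4.8]{ShT12} requires. Thus the proof is essentially a bookkeeping assembly of previously established facts, and I would present it in three short lines: invoke Proposition \ref{phc1}, invoke \cite[Corollary 4.8]{ShT12} at each successor, and conclude by Proposition \ref{ESBW}.
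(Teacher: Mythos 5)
Your proposal is correct and follows exactly the argument the paper gives: the paper's one-line proof says precisely that, since every closed bicompact ideal is a $BW$-ideal (by \cite[Corollary 4.8]{ShT12}), the claim follows by combining Proposition \ref{phc1} with the transfinite stability in Proposition \ref{ESBW}. Your careful note reconciling the $J_0$ versus $J_1$ indexing between the two propositions, and the observation that the chain's limit-ordinal behaviour matches the hypothesis of Proposition \ref{ESBW}, is the right bookkeeping and does not change the substance.
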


Indeed, as every closed bicompact ideal of a Banach algebra $A$ is a
$BW$-ideal, the result follows from Propositions \ref{ESBW} and \ref{phc1}.

A Banach algebra is called \textit{scattered} if its elements have countable
spectra. It follows from \cite[Theorem 8.15]{ShT14} that every hypocompact
algebra is scattered.

\subsection{Compact quasinilpotence, and coincidence of essential and
$f$-essential joint spectral radii}

Recall that a Banach algebra $A$ is \textit{compactly quasinilpotent} if
$\rho(M)=0$ for any precompact subset $M$ of $A$.

The following result shows that any compactly quasinilpotent ideal can be
considered as inessential when one calculates the joint spectral radius.

\begin{theorem}
\cite[Theorem 4.18]{ShT05r}{\large \label{iness} }$\rho\left(  M\right)
=\rho\left(  M/J\right)  $ for each compactly quasinilpotent ideal and
precompact set $M\subset A$.
\end{theorem}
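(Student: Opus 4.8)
The plan is to dispose of the trivial inequality first and then reduce everything to one normalized statement. Since the quotient map $q_J$ is a contractive homomorphism, $\|M^n/J\|=\|q_J(M)^n\|\le\|M^n\|$, so $\rho(M/J)\le\rho(M)$ always. For the reverse inequality it is enough to prove the reduced claim: \emph{if $M\subseteq A$ is precompact and $\rho(M/J)<1$, then $\rho(M)\le1$}. Granting this, the theorem follows by rescaling: with $q:=\rho(M/J)$ and $\varepsilon>0$, applying the reduced claim to $(q+\varepsilon)^{-1}M$ gives $\rho(M)\le q+\varepsilon$, and $\varepsilon\to0$ yields $\rho(M)\le\rho(M/J)$; the case $q=0$ is handled by applying it to $\lambda M$ for all $\lambda>0$.

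So assume $\rho(M/J)<1$, choose $m$ with $\|M^m/J\|=:\delta<1$, set $N=M^m$ (precompact), and fix $\delta<\delta_1<\delta_2<1$. I would use precompactness of $N$ to split it additively over $J$ with a \emph{finite} piece lying in $J$: pick a finite $\varepsilon_0$-net $\{a_1,\dots,a_p\}$ of $N$ with $\varepsilon_0<\delta_2-\delta_1$, and for each $i$ choose $j_i\in J$ with $\|a_i-j_i\|<\delta_1$ (possible since $\mathrm{dist}(a_i,J)\le\|N/J\|=\delta<\delta_1$). Put $J_0=\{j_1,\dots,j_p\}\subseteq J$. Then every $a\in N$ lies within $\delta_2$ of some $j_i$, so $N\subseteq B+J_0$, where $B$ (the set of all differences $a-j_i$ arising this way) is precompact with $\|B\|\le\delta_2<1$ and $J_0$ is finite. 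The crucial auxiliary object is
\[
P:=J_0\cup\bigcup_{s\ge1}J_0B^{\,s}\subseteq J ,
\]
which I claim is precompact: each $J_0B^{\,s}$ is precompact with norm $\le\|J_0\|\,\delta_2^{\,s}\to0$, so a sequence in $P$ either meets $\bigcup_{s\le S}J_0B^{\,s}$ infinitely often for some $S$, or has a subsequence of norms tending to $0$. Since $J$ is compactly quasinilpotent, $\rho(P)=0$, hence for each $\eta\in(0,1)$ there is $C\ge1$ with $\|P^t\|\le C\eta^{\,t}$ for all $t\ge1$.

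The main work is the estimate of $\|N^k\|$. I would expand a product of $k$ elements of $N$, writing each $a=b_a+j_a$ with $b_a\in B$, $j_a\in J_0$, into $2^k$ terms indexed by the set $S$ of positions using the $J_0$-summand. The term for $S=\emptyset$ lies in $B^k$, of norm $\le\delta_2^{\,k}$. For $|S|=t\ge1$, keep the initial $B$-block together and attach to each $S$-position its $J_0$-factor followed by the $B$-block running up to the next $S$-position: the initial block lies in some $B^{s_0}$ with $\|B^{s_0}\|\le1$, and each of the $t$ groups lies in $J_0\cup J_0B^{s_l}\subseteq P$, so the term lies in $B^{s_0}P^t$ and has norm $\le\|P^t\|$. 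As there are $\binom{k}{t}$ sets $S$ of size $t$,
\[
\|N^k\|\le\delta_2^{\,k}+\sum_{t=1}^{k}\binom{k}{t}\|P^t\|\le\delta_2^{\,k}+C\sum_{t=1}^{k}\binom{k}{t}\eta^{\,t}\le(C+1)(1+\eta)^k ,
\]
so $\rho(N)\le1+\eta$ for every $\eta>0$, hence $\rho(N)\le1$, and $\rho(M)^m=\rho(M^m)=\rho(N)\le1$ gives $\rho(M)\le1$. This proves the reduced claim and hence the theorem.

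The step I expect to be the real obstacle is this last estimate: a direct binomial expansion of $\|N^k\|$ only reproduces the trivial bound $\|N\|^k$, because an individual element of $J_0$ need not be small. What makes it work is that in a long word the factors lying in $J$ are separated by blocks of factors of norm $<1$, and absorbing each block into the preceding $J$-factor converts the word into an arbitrarily long product drawn from the single precompact subset $P$ of $J$ — precisely where compact quasinilpotence of $J$ supplies the geometric decay. Checking precompactness of $P$ and the bookkeeping of the grouping are the remaining routine points.
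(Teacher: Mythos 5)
Your argument is correct, and it follows what is essentially the standard Shulman--Turovskii route for this statement (the paper itself only cites \cite[Theorem 4.18]{ShT05r} without reproducing the proof). The reduction to the normalized claim, the additive splitting $N\subseteq B+J_0$ of a power $N=M^m$ with $\|B\|<1$ and $J_0\subseteq J$ finite, the auxiliary precompact set $P=J_0\cup\bigcup_{s\ge1}J_0B^s\subseteq J$, and the regrouping of each term of the binomial expansion of a word in $N^k$ into an initial $B$-block times a product of $t$ elements of $P$ --- so that compact quasinilpotence of $J$ supplies $\|P^t\|\le C\eta^t$ and hence $\|N^k\|\le(C+1)(1+\eta)^k$ --- is exactly the mechanism used in the cited source. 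Two small points worth making explicit when writing this up: choose the $\varepsilon_0$-net $\{a_1,\dots,a_p\}$ inside $N$, so that $\mathrm{dist}(a_i,J)\le\|N/J\|$ is indeed justified; and for the precompactness of $P$ the cleanest formulation is that for any $\varepsilon>0$ one picks $S$ with $\|J_0\|\,\delta_2^{\,s}<\varepsilon$ for $s>S$, takes a finite $\varepsilon$-net of the precompact set $\bigcup_{s\le S}J_0B^s$, and adjoins $0$ to obtain a finite $\varepsilon$-net of $P$. With those tidied, the proof is complete.
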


In particular all compactly quasinilpotent ideals are BW-ideals.

\begin{theorem}
{\large \cite[Theorem 3.14]{ShT12}\label{inf1} }If a Banach algebra $A$ is
hypocompact and consists of quasinilpotents then it is compactly quasinilpotent.
\end{theorem}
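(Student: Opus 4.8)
The plan is to derive the statement from the structural results of this section, the key observation being that $A$ is a closed ideal of itself. Since $A$ is hypocompact, it is a hypocompact closed ideal of $A$, so Theorem \ref{hcf} applies with $J=A$ and shows that the formula (\ref{GBWJ}) holds for the ideal $A$, i.e.\ $\rho(M)=\max\{\rho(M/A),r(M)\}$ for every precompact $M\subseteq A$. (Concretely, Proposition \ref{phc1} produces a transfinite increasing chain $(J_\alpha)_{\alpha\le\gamma}$ of closed ideals of $A$ with $J_1=(0)$, $J_\gamma=A$, and each $J_{\alpha+1}/J_\alpha$ a bicompact, hence $BW$-, ideal of $A/J_\alpha$; since $J_1=(0)\in BW(A)$, Proposition \ref{ESBW} then gives $A=J_\gamma\in BW(A)$.) In particular $A$ is a Berger-Wang algebra.

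Now fix a precompact set $M\subseteq A$. Since $M/A=q_A(M)=\{0\}$, we have $\rho(M/A)=0$, so evaluating (\ref{GBWJ}) at $J=A$ yields
\[
\rho(M)=\max\{\rho(M/A),\,r(M)\}=r(M).
\]
It remains to see that $r(M)=0$. As $A$ is an algebra, $M^{n}\subseteq A$ for every $n$, and every element of $A$ is quasinilpotent; hence $\rho(a)=0$ for each $a\in M^{n}$, so $\sup\{\rho(a):a\in M^{n}\}=0$ for all $n$ and therefore $r(M)=0$. Thus $\rho(M)=0$, i.e.\ $A$ is compactly quasinilpotent.

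The proof is short because all the force sits in Proposition \ref{phc1} together with the transfinite stability of $BW$-ideals (Proposition \ref{ESBW}), which between them say that a hypocompact Banach algebra is automatically a Berger-Wang algebra; the quasinilpotence hypothesis enters only through the collapse $r(M)=0$. The only points that genuinely need checking are the routine observations that $A$ qualifies as a hypocompact closed ideal of itself and that $\rho(M/A)=0$, so there is no real obstacle. One could instead run a transfinite induction proving directly that every $J_\alpha$ is compactly quasinilpotent, using Theorem \ref{iness} at successor steps (since $J_\alpha$ is compactly quasinilpotent and $J_{\alpha+1}/J_\alpha$, being a bicompact algebra all of whose elements are quasinilpotent, is a Berger-Wang algebra with $r\equiv 0$, hence also compactly quasinilpotent); but this route needs an extra lemma at limit ordinals, so passing through $A\in BW(A)$ is cleaner.
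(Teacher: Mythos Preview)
Your argument is correct. Within this paper the statement is simply cited from \cite[Theorem 3.14]{ShT12} without proof, so there is no in-paper argument to compare against; your derivation via Theorem~\ref{hcf} is exactly the route the surrounding text invites, and the remark just after (\ref{bwa}) that every $BW$-ideal is a Berger--Wang algebra is precisely the observation you exploit.

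One caveat worth flagging: in the source \cite{ShT12} the result you are proving is Theorem~3.14, while Theorem~\ref{hcf} is Theorem~4.11 there and the input ``bicompact ideals are $BW$-ideals'' is Corollary~4.8. So in the original development the present theorem is established \emph{before} the machinery you invoke, and one should check that no circularity is introduced (i.e.\ that the proof of Corollary~4.8 in \cite{ShT12} does not already rely on Theorem~3.14). Within the logical order of \emph{this} paper your proof is clean; if you want an argument independent of Theorem~\ref{hcf}, the alternative you sketch at the end---a direct transfinite induction along the chain of Proposition~\ref{phc1}, using at successor steps that a bicompact algebra of quasinilpotents is compactly quasinilpotent, and at limit steps the easy fact that $\rho(M)=\inf_\alpha \rho(M/J_\alpha)$ when $J_\beta=\overline{\cup_{\alpha<\beta}J_\alpha}$---is the natural self-contained route and is essentially how \cite{ShT12} proceeds.
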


The following result shows that the reverse inclusion fails.

\begin{proposition}
\label{radNOTcomp} There are compactly quasinilpotent Banach algebras without
non-zero hypocompact ideals.
\end{proposition}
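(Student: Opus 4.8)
The plan is to construct an explicit example. The natural strategy is to build a Banach algebra $A$ that is compactly quasinilpotent (so every precompact subset has joint spectral radius $0$) but in which every non-zero closed ideal fails to contain any non-zero compact element, and in fact fails to have a non-zero compact element in any non-zero quotient. Since a hypocompact ideal, by Proposition \ref{phc1}, would have to be built from bicompact blocks and in particular would contain a non-zero compact element, it suffices to arrange that $A$ has \emph{no} non-zero compact elements at all and that this persists under passing to ideals --- equivalently, that no non-zero closed ideal of $A$ is hypocompact. The cleanest way to kill compact elements is to make the maps $\mathrm{L}_a\mathrm{R}_a$ far from compact, e.g. by taking $A$ to be (a suitable completion of) a weighted convolution-type algebra or a weighted shift algebra where the relevant multiplication operators are, say, weighted bilateral shifts composed with themselves and hence never compact on an infinite-dimensional space.

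**Key steps, in order.** First I would fix the underlying object: take $A$ to be the weighted $\ell^1$-algebra on a suitable semigroup --- for instance $A = \ell^1(\mathbb{N}, \omega)$ with convolution product and a weight $\omega(n)$ decaying fast enough (super-exponentially, $\omega(n)^{1/n}\to 0$) to force quasinilpotence of every element, hence of the whole algebra in the compact sense. Step two: verify \emph{compact quasinilpotence}. For a radical weighted convolution algebra one has $\|a^{*n}\|^{1/n}\to 0$ for each $a$; the uniform (precompact) version follows because on a precompact set the norms of the generators are uniformly bounded and the weight decay gives $\|M^n\| \le C^n \omega(n) \to^{1/n} 0$ uniformly. (This is where the rate of decay of $\omega$ must be chosen carefully, tail-summing geometric-type bounds.) Step three: show $A$ has \emph{no non-zero compact elements}. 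For $a \ne 0$ in such a convolution algebra, the operator $x \mapsto a x a = a^{*}x^{*}a$ is translation-like and, because convolution by a non-zero element of $\ell^1$ of this type is bounded below on an appropriate infinite-dimensional subspace (no eigenvalue collapse, by looking at the "leading term" of $a$), $\mathrm{L}_a\mathrm{R}_a$ is not compact --- it is injective with non-closed but infinite-dimensional range behavior obstructing compactness. Step four: propagate this to ideals and quotients. Every non-zero closed ideal $I$ of a weighted convolution algebra on $\mathbb N$ is again of the same type (a "standard ideal" $\overline{\mathrm{span}}\{\delta_n : n \ge n_0\}$-style object), so the same argument shows $I$ has no non-zero compact element, and likewise no non-zero quotient of $I$ does; hence no non-zero closed ideal is hypofinite, a fortiori none is hypocompact. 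Combining: $A$ is compactly quasinilpotent with no non-zero hypocompact ideal.

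**Main obstacle.** The delicate point is Step three together with Step four: one must simultaneously (i) make the algebra radical \emph{and} (ii) keep all the multiplication operators $\mathrm{L}_a\mathrm{R}_b$ non-compact, and preserve (ii) in \emph{every} quotient of \emph{every} ideal. Fast weight decay helps (i) but tends to make operators "small", which flirts with compactness; so the example has to be chosen so that the decay is uniform enough for quasinilpotence yet the convolution structure is rigid enough (e.g. no finite-dimensional invariant pieces, the "order" of an element being well-defined and additive under products) that compactness of $\mathrm{L}_a\mathrm{R}_a$ is ruled out by a lower-bound estimate on a fixed infinite set of basis vectors. I expect the verification that this non-compactness is hereditary to all subquotients --- i.e. that there is no ideal on which the algebra degenerates to something hypocompact --- to require the structural classification of closed ideals of the chosen algebra, and this is the part of the argument that should be written out with care; everything else (the quasinilpotence estimates, boundedness on precompact sets) is routine once the weight is pinned down.
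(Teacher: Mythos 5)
There is a genuine gap, and it sits exactly where you flagged the ``main obstacle.'' Your Step three claims that in $A=\ell^1(\mathbb N,\omega)$ with super\mbox{-}exponential weight decay, no non-zero element is compact because the maps $\mathrm L_a\mathrm R_a$ are ``bounded below on an appropriate infinite-dimensional subspace.'' That is false, and the paper proves the opposite: with $\lim_k(w_{k+1}/w_k)=0$ (e.g.\ $w_k=k^{-k}$), the algebra $V=\ell^1(w)$ is a \emph{compact} Banach algebra. Concretely, $\mathrm L_{e_1}$ maps the normalized basis vector $e_k/w_k$ to $e_{k+1}/w_k$, of norm $w_{k+1}/w_k\to 0$, so $\mathrm L_{e_1}$ is compact; since the set of compact elements is a closed subalgebra and $V$ is topologically generated by $e_1$, every element of $V$ is compact. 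Thus $V$ is bicompact, hence hypocompact, and would equal its own hypocompact radical --- the exact opposite of what is needed. The tension you identified between radicalness (fast decay of $w$) and non-compactness of $\mathrm L_a\mathrm R_a$ is real, but its resolution is that a single weighted convolution algebra on $\mathbb N$ \emph{cannot} serve as the example; your Step four then has nothing to propagate.

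The paper's repair is structural: it uses $V$ precisely \emph{because} it is compact and radical (so by Theorem~\ref{inf1} it is compactly quasinilpotent), and then forms $A=V\widehat{\otimes}B$ with $B$ a commutative Banach algebra having no non-zero compact elements (e.g.\ $B=C[0,1]$). The tensor product stays compactly quasinilpotent by \cite[Theorem 4.29]{ShT05r}, while the compactness of elements of $V$ is ``absorbed'' by $B$: a compact element $c$ of $A$ forces, by slicing the tensor expansion $c^2=\sum e_{k+1}\otimes\sum_{i+j=k}d_id_j$, the coefficients $b_k$ of $c^2$ to be compact in $B$, hence zero, and then $c^2=0$ forces $d_m^2=0$ for the leading coefficient, so $d_m$ is compact in $B$, hence zero, yielding $c=0$. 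If you want to salvage your write-up, the indispensable missing idea is this tensor construction (or some other mechanism that decouples ``compactly quasinilpotent'' from ``has compact elements''); a bare weighted $\ell^1$ convolution algebra cannot do it.
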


\begin{proof}
Let $V$ be the algebra $\ell^{1}(w)$, where the weight $w=(w_{k}%
)_{k=1}^{\infty}$ satisfies the condition
\begin{equation}
\lim\left(  w_{k+1}/w_{k}\right)  =0 \label{w}%
\end{equation}
(for instance, one can take $w_{k}=1/k^{k}$). It follows easily from (\ref{w})
that such a weight is radical, that is $\lim_{k\to\infty}w_{k}^{1/k} = 0$.
Therefore all elements of $V$ are quasinilpotent.

Let $A$ be the projective tensor product $V\widehat{\otimes}B$ of $V$ and any
commutative Banach algebra $B$ without non-zero compact elements (for
instance, one may take for $B$ the algebra $C\left[  0.1\right]  $ of
continuous functions on $\left[  0.1\right]  \subseteq\mathbb{R}$).

Let us write elements $v\in V$ as
\begin{equation}
v=\sum_{k=1}^{\infty}\lambda_{k}e_{k}, \label{v}%
\end{equation}
where $e_{k}$ is the sequence $(\alpha_{1},\alpha_{2},...)$ with $\alpha
_{i}=1$ if $i=k$, and $0$ otherwise. It follows that $e_{k}\neq0$ for all $k$,
so that $V$ has no non-zero nilpotents. Indeed, if $v^{m}=0$ and $\lambda_{n}$
is the first non-zero coefficient in the expansion (\ref{v}) then clearly
$e_{mn}=0$, a contradiction.

To see that the algebra $V$ is compact, note that the set of all compact
elements in any Banach algebra is closed and with each element contains the
algebra generated by it. So it suffices to show that the element $e_{1}$ is
compact, because $V$ is topologically generated by $e_{1}$.

Let $V_{\odot}$ be the unit ball of $V$:
\[
V_{\odot}=\left\{  \sum_{k=1}^{\infty}\lambda_{k}e_{k}:\sum_{k=1}^{\infty
}|\lambda_{k}|w_{k}\leq1\right\}  .
\]
We are going to show that L$_{e_{1}}$ is a compact operator. For this, it
suffices to show that, for each $\varepsilon>0,$ the set $e_{1}V_{\odot}$
contains a finite $\varepsilon$-net.

For each $n$, let $P_{n}$ be the natural projection on the linear span of
$\{e_{1},...,e_{n}\}$, and let $K_{n}=P_{n}V_{\odot}$ and $K_{n}^{\bot}%
=(1-P_{n})V_{\odot}$. Then
\[
e_{1}V_{\odot}\subset e_{1}K_{n}+e_{1}K_{n}^{\bot}.
\]
The set $e_{1}K_{n}$ is compact for each $n$, so in any case it contains a
finite $(\varepsilon/2)$-net. Now it suffices to show that $\Vert e_{1}%
a\Vert\leq\varepsilon/2$ for each $a\in K_{n}^{\bot}$ if $n$ is sufficiently large.

By (\ref{w}), there is $n$ such that $w_{k+1}<\varepsilon w_{k}/2$ for all
$k\geq n$. Then for each $a=\sum_{k=n+1}^{\infty}\lambda_{k}e_{k}\in K_{n}^{\bot}$,
we have
\begin{align*}
\Vert e_{1}a\Vert &  =\left\Vert \sum_{k=n+1}^{\infty}\lambda_{k}%
e_{k+1}\right\Vert =\sum_{k=n+1}^{\infty}|\lambda_{k}|w_{k+1}<\varepsilon
\left(  \sum_{k=n+1}^{\infty}|\lambda_{k}|w_{k}\right)  /2=\varepsilon\Vert
a\Vert/2\\
&  \leq\varepsilon/2.
\end{align*}
Thus $e_{1}V_{\odot}$ contains a finite $\varepsilon$-net for every
$\varepsilon>0$, i.e., $e_{1}V_{\odot}$ is a compact set, whence $V$ is a
compact algebra consisting of quasinilpotents.

Applying Theorem \ref{inf1} we see that the algebra $V$ is compactly
quasinilpotent. By \cite[Theorem 4.29]{ShT05r}, the same is true for the
tensor product of $V$ and any Banach algebra. Thus the algebra $A$ is
compactly quasinilpotent. It remains to show that $A$ is not hypocompact. Each
element of $A$ has the form
\[
a=\sum_{k=1}^{\infty}e_{k}\otimes b_{k},\text{ where }\sum_{k=1}^{\infty}\Vert
b_{k}\Vert w_{k}<\infty.
\]

Suppose that an element $c\in A$ is compact. Since $A$ is commutative, the
operator L$_{c^{2}}=$ L$_{c}$R$_{c}$ is compact. Thus setting $a=c^{2}$ we
have that the set $aA_{\odot}$ is precompact. Let $B_{\odot}$ be the unit ball
of $B$. If $aA_{\odot}$ is a precompact subset of $A$ then, in particular, the
set
\[
\left\{  a(e_{1}\otimes b)\text{: }b\in B_{\odot}\right\}  =\left\{
\sum_{k=1}^{\infty}e_{k+1}\otimes b_{k}b\text{: }b\in B_{\odot}\right\}
\]
is precompact. In particular, all sets
\[
E_{k}:=\{e_{k+1}\otimes b_{k}b\text{: }b\in B_{\odot}\}
\]
are precompact because the natural projection of $V\widehat{\otimes}B$ onto the subspace $e_j{\otimes}B$ is bounded. Each set $E_{k}$ is homeomorphic to $b_{k}B_{\odot}$ whence
$b_{k}$ is a compact element of $B$. Since $B$ has no non-zero compact
elements, $b_{k}=0$ for any $k>0$, whence $a=0$, i.e. $c^{2}=0$.

Let us show that $c=0$. Indeed, if $c\neq0$ let
\[
c=\sum_{k=1}^{\infty}e_{k}\otimes d_{k},
\]
and let $d_{m}$ be the first non-zero element among all $d_{k}$. Then
\[
0=c^{2}=\sum_{k=2m}^{\infty}e_{k}\otimes\sum_{i+j=k}d_{i}d_{j}\text{ whence
}d_{m}^{2}=0.
\]
Therefore $d_{m}$ is a compact element of $B$ (since $B$ is commutative, $d_mbd_m = d_m^2b = 0$, for all $b\in B$). Since $B$ has no non-zero
compact elements, $d_{m}=0$, a contradiction.

We proved that $A$ has no non-zero compact elements. It follows that $A$ has
no bicompact and hypocompact ideals.
\end{proof}

\begin{theorem}
\label{mainRad} Let $A$ be a Banach algebra. Then there are the largest
hypocompact ideal $\mathcal{R}_{\mathrm{hc}}\left(  A\right)  $, the largest
hypofinite ideal $\mathcal{R}_{\mathrm{hf}}\left(  A\right)  $, the largest
compactly quasinilpotent ideal $\mathcal{R}_{\mathrm{cq}}\left(  A\right)  $
and the largest scattered ideal $\mathcal{R}_{\mathrm{sc}}\left(  A\right)  $.
\end{theorem}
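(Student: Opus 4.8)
The plan is to prove the existence of each largest ideal by the standard "sum of all such ideals" argument, which requires two ingredients for each class $\mathcal{C}$ of ideals (hypocompact, hypofinite, compactly quasinilpotent, scattered): first, that the closure of a sum of two ideals in $\mathcal{C}$ is again in $\mathcal{C}$; second, that the closure of an increasing union (in fact, an increasing transfinite chain) of ideals in $\mathcal{C}$ is again in $\mathcal{C}$. Granting both, one defines $\mathcal{R}_{\mathrm{hc}}(A)$ as the closure of the sum of all closed hypocompact ideals of $A$; directedness gives that every finite partial sum lies in $\mathcal{C}$, and the chain-stability then pushes this through the transfinite union, so $\mathcal{R}_{\mathrm{hc}}(A)$ itself is hypocompact and visibly contains every hypocompact ideal. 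The same template yields $\mathcal{R}_{\mathrm{hf}}(A)$, $\mathcal{R}_{\mathrm{cq}}(A)$, $\mathcal{R}_{\mathrm{sc}}(A)$.

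First I would isolate the transfinite-chain step in a uniform form: if $(J_\alpha)_{\alpha\le\gamma}$ is an increasing transfinite chain of closed ideals with $J_0\in\mathcal{C}$ and each successor quotient $J_{\alpha+1}/J_\alpha\in\mathcal{C}(A/J_\alpha)$, then $J_\gamma\in\mathcal{C}$. For the hypocompact and hypofinite classes this is essentially the structure already recorded around Proposition~\ref{phc1}, since a non-zero quotient of $J_\gamma$ by a closed ideal, intersected with the chain, must meet some successor gap non-trivially and there inherit a non-zero compact (resp.\ finite-rank) element; the limit-ordinal case is handled because $J_\beta=\overline{\bigcup_{\alpha<\beta}J_\alpha}$ and the compact elements form a closed set, so density of $\bigcup_{\alpha<\beta}J_\alpha$ transports a compact element up. For compactly quasinilpotent ideals the relevant extension property is exactly Theorem~\ref{iness} together with Theorem~\ref{mainRad}'s predecessors: if $I\in\mathcal{C}$ is closed and $J/I$ is compactly quasinilpotent in $A/I$, then $\rho(M)=\rho(M/I)=0$ for precompact $M\subset J$, so $J\in\mathcal{C}$; the chain then follows by transfinite induction, using at limits that $\rho$ is continuous enough on precompact sets and that $\bigcup_{\alpha<\beta}J_\alpha$ is dense in $J_\beta$. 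For scattered ideals one uses the spectral characterisation and the fact, cited above via \cite{ShT14}, that hypocompact implies scattered, plus the standard permanence of scatteredness under extensions by closed ideals and under density.

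The finite-sum step is the routine but necessary part: for two closed ideals $I,J$ of the relevant class, $\overline{I+J}$ sits in an extension $\bigl(\overline{I+J}\bigr)/I \cong$ (closure of the image of $J$) as a closed ideal of $A/I$, and the image $q_I(J)$ is a quotient of $J$, hence again in $\mathcal{C}(A/I)$ by the respective hereditary property; then the two-term version of the chain step ($\gamma=2$, $J_0=I$, $J_2=\overline{I+J}$) promotes $\overline{I+J}$ into $\mathcal{C}$. This makes the family of closed ideals in $\mathcal{C}$ directed, which is what the transfinite union argument needs.

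I expect the main obstacle to be the limit-ordinal case of the chain step for the compactly quasinilpotent class, where one must show $\rho(M)=0$ for every precompact $M\subset J_\beta=\overline{\bigcup_{\alpha<\beta}J_\alpha}$ knowing it only for precompact subsets of each $J_\alpha$; the point is that a precompact set in the closure can be approximated in norm by a precompact set inside $\bigcup_{\alpha<\beta}J_\alpha$, but that union need not be closed and need not lie in a single $J_\alpha$, so one must combine a compactness/diagonal argument (to get a uniform $\alpha$ up to small error) with the joint-spectral-radius estimate $\rho(M)\le\|M-N\|+\rho(N)$-type perturbation bounds. The analogous issue for scattered ideals — preserving countability of spectra under a closure of an increasing chain — is comparably delicate and, as noted, is handled by appeal to the cited structural results rather than recomputed here.
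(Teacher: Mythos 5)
The paper itself gives no proof of this theorem; it defers entirely to \cite[Corollary 3.10]{ShT12}, \cite[Theorem 4.18]{ShT05r}, and \cite[Theorem 8.10]{ShT14}, so there is no in-paper argument to compare against. Your template --- take the closure of the sum of all closed ideals in the class, justified by directedness plus transfinite chain-stability --- is a workable route and is in the same spirit as those works, but the references proceed more systematically through the preradical/convolution machinery that the paper itself sketches in its subsection on procedures: one exhibits a topological preradical satisfying (H1), (I1), (I2) (for $\mathcal{R}_{\mathrm{hc}}$, the closed ideal generated by the compact elements; similarly for the other three classes) and applies the convolution procedure $\mathcal{R}\mapsto\mathcal{R}^{\ast}$, which delivers (H2) and hence the largest radical ideal uniformly, without a separate directedness argument per class.

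There are also concrete gaps in the sketch. (i) In the chain step for hypocompact/hypofinite ideals you locate a nonzero $a+L$ in a subideal $C=(J_{\alpha_0+1}+L)/L$ of $J_\gamma/L$ with $\mathrm{L}_a\mathrm{R}_a$ compact \emph{on $C$}, and treat it as a compact element of $J_\gamma/L$; but compactness of $\mathrm{L}_a\mathrm{R}_a$ on a closed ideal $C$ of $B$ does not directly give compactness of $\mathrm{L}_a\mathrm{R}_a$ on $B$ --- the factorization $a^{2}xa^{2}=a(axa)a$ gives it only for $a^{2}$, which may die in $L$. This permanence is real content (it is precisely why Proposition \ref{phc1} refines hypocompactness to \emph{bicompact} blocks) and it also hides in your directedness step, where you need closure-stability of the class for $\overline{q_I(J)}$; the remark that ``the compact elements form a closed set'' is not by itself enough for either. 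Relatedly, your limit-ordinal comment conflates two different points: the reason the least $\alpha$ with $J_\alpha\not\subseteq L$ is a successor is simply that $L$ is closed, and the density remark is doing no work there. (ii) You correctly flag the limit step for $\mathcal{R}_{\mathrm{cq}}$ as the delicate one, but the fix you gesture at presupposes a perturbation bound of the form ``$\rho(M)$ is controlled by $\rho(N)$ plus the Hausdorff distance from $M$ to $N$,'' which is not available; the argument has to run through $\|M^n\|$ estimates and a genuine uniform-approximation step, and this is exactly the content of \cite[Theorem 4.18]{ShT05r}. Note also that invoking Theorem \ref{iness} --- which \emph{is} \cite[Theorem 4.18]{ShT05r}, the very reference cited for the existence of $\mathcal{R}_{\mathrm{cq}}$ --- as an ingredient in proving that existence is circular as presented, even if the two halves of that cited theorem are logically separable.
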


For the proofs see \cite[Corollary 3.10]{ShT12}, \cite[Theorem 4.18]{ShT05r},
and \cite[Theorem 8.10]{ShT14}.

Now we return to our initial problem.

\begin{theorem}
\label{morr2} Let $M$ be a precompact subset of $B\left(  X\right)  $. Then
\begin{equation}
\rho_{f}\left(  M\right)  =\rho_{e}\left(  M\right)  . \label{mor2}%
\end{equation}

\end{theorem}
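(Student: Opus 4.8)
The plan is to deduce (\ref{mor2}) from the already‑established equality $\rho_\chi(M)=\rho_e(M)$ (Theorem \ref{morr}) together with the Banach‑algebraic machinery developed in Section 3, applied to the Calkin algebra. The key observation is that $\rho_f(M)$ is the joint spectral radius of the image of $M$ in $B(X)/F(X)$, where $F(X)$ is the (non‑closed) ideal of finite rank operators, but since $\rho_f$ only involves a seminorm we may equally work with the closure $\overline{F(X)}=K(X)$ on one side; the content is to show that passing from $K(X)$ to $F(X)$ does not change the joint spectral radius. Equivalently, writing everything inside $B(X)/F(X)$, the ideal $K(X)/F(X)$ is a closed ideal of $B(X)/F(X)$, and one wants $\rho(M/F(X))=\rho(M/K(X))$ for precompact $M$. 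By Theorem \ref{iness} it suffices to prove that $K(X)/F(X)$ is \emph{compactly quasinilpotent}, i.e. $\rho(N)=0$ for every precompact $N\subseteq K(X)/F(X)$.

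So the main step I would carry out is: \emph{the quotient ideal $K(X)/\overline{F(X)}$ is compactly quasinilpotent} — but here a subtlety arises, because $\overline{F(X)}=K(X)$, so this quotient is zero and the statement is vacuous; the honest formulation is that we must compare $\rho$ taken modulo the seminorm $\|\cdot\|_f$ with $\rho$ taken modulo $\|\cdot\|_e$. Concretely, $\rho_f(M)=\inf_n \|M^n\|_f^{1/n}$ and $\rho_e(M)=\inf_n\|M^n\|_e^{1/n}$, and since $\|\cdot\|_e\le\|\cdot\|_f$ always, only the inequality $\rho_f(M)\le\rho_e(M)$ needs proof. I would establish this by showing that for a precompact set $N$ in $B(X)$ with $\rho_e(N)<1$ one has $\rho_f(N)<1$ as well; then homogeneity (rescaling $M$) upgrades this to $\rho_f(M)\le\rho_e(M)$. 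To get $\rho_f(N)<1$ from $\rho_e(N)<1$, the idea is that $\|T\|_f=\|T\|_e$ whenever the relevant quotient structure is ``nice'' — more precisely, one uses that on a precompact set the essential norm and $f$‑norm of high powers become comparable because the compact operators approximating $N^k$ can themselves be approximated in norm by finite rank operators \emph{uniformly}, using precompactness of $N^k$ (which follows from Lemma \ref{chi} and Lemma \ref{lrmk} exactly as in Proposition \ref{mpre}).

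The cleanest route, and the one I expect the authors take, is to invoke Theorem \ref{iness} in the algebra $A=B(X)/F(X)$ applied to the closed ideal $J=K(X)/F(X)$: one shows $J$ is compactly quasinilpotent, hence $\rho(M/F(X))=\rho((M/F(X))/J)=\rho(M/K(X))=\rho_e(M)$, and the left side is $\rho_f(M)$. To prove $K(X)/F(X)$ is compactly quasinilpotent, by Theorem \ref{inf1} it is enough to show it is hypocompact and consists of quasinilpotents. That it consists of quasinilpotents is a classical fact (every compact operator is quasinilpotent modulo finite rank operators — indeed the spectrum of a compact operator is a sequence tending to $0$, and each non‑zero point is an eigenvalue of finite multiplicity, so is killed in $B(X)/F(X)$; more carefully, $r(T/F(X))=0$ for $T\in K(X)$ because $T^n/F(X)$ has spectral radius controlled by the tail of the eigenvalue sequence). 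That $K(X)/F(X)$ is hypocompact requires showing every non‑zero quotient of it has a non‑zero compact element; here one leans on the fact, noted after Proposition \ref{phc1}, that $K(X)$ itself is bicompact (by Vala's theorem, since $L_aR_b$ with $a,b$ compact is compact), hence hypocompact, and that hypocompactness passes to quotients.

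The step I expect to be the main obstacle is verifying that $K(X)/F(X)$ genuinely consists of quasinilpotent elements and, more delicately, that it is hypocompact as a \emph{normed} (not complete) algebra — the quotient $B(X)/F(X)$ is not a Banach algebra and $F(X)$ is not closed, so some care is needed about which results apply verbatim. One way around this is to avoid $F(X)$ entirely: note $\rho_f(M)\le\rho_e(M)$ is what must be shown, fix $t$ with $\rho_e(M)<t$, rescale so $\rho_e(M)<1<t$, and argue directly that $\|M^n\|_f^{1/n}\to$ something $<1$. For this, pick $m$ with $\|M^m\|_e<1$; by Lemma \ref{ldpre}‑type reasoning the powers $M^{km}/K(X)$ become small in norm, so each such power is within $\varepsilon$ (in operator norm) of a compact operator $S$; approximating $S$ by a finite rank operator then shows $\|M^{km}\|_f$ is small, giving $\rho_f(M)<1$. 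The bookkeeping tying the $\varepsilon$'s together, and confirming the approximating finite rank operators can be chosen with controlled norm, is the routine‑but‑necessary core of the argument.
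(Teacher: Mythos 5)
Your central strategy --- reduce to Theorem \ref{iness} by showing the relevant quotient of $K(X)$ is compactly quasinilpotent, obtaining that from Theorem \ref{inf1} by combining bicompactness (via Vala's theorem) with the fact that compact operators become quasinilpotent modulo finite-rank operators (since their spectral projections are finite rank) --- is precisely the paper's argument, and the reasoning in your middle paragraphs is sound. However, there is a genuine error at the outset: $\overline{F(X)}=K(X)$ is \emph{not} true for arbitrary Banach spaces; it is equivalent to the approximation property, which can fail (Enflo, Szankowski). When it fails, $K(X)/\overline{F(X)}$ is a nonzero Banach algebra, and that is exactly the nontrivial case --- under the approximation property the seminorms $\|\cdot\|_f$ and $\|\cdot\|_e$ coincide and the theorem is immediate. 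Your worry that ``the statement is vacuous'' stems from this false identification, not from the theorem itself.

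Once this is corrected, the technical concern you raise in the last paragraph (that $F(X)$ is not closed, so $B(X)/F(X)$ is not a Banach algebra and Theorems \ref{inf1} and \ref{iness} do not apply verbatim) is resolved by a one-line fix rather than the fallback $\varepsilon$-argument you sketch: replace $F(X)$ by $\overline{F(X)}$ throughout. Since $\|T\|_f=\operatorname{dist}\bigl(T,F(X)\bigr)=\operatorname{dist}\bigl(T,\overline{F(X)}\bigr)=\bigl\|T/\overline{F(X)}\bigr\|$, one has $\rho_f(M)=\rho\bigl(M/\overline{F(X)}\bigr)$. The quotient $K(X)/\overline{F(X)}$ is a Banach algebra which is bicompact (as a quotient of the bicompact $K(X)$) and consists of quasinilpotents, hence is compactly quasinilpotent by Theorem \ref{inf1}. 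Applying Theorem \ref{iness} in the Banach algebra $B(X)/\overline{F(X)}$ to the closed ideal $J=K(X)/\overline{F(X)}$ then gives $\rho_f(M)=\rho\bigl(M/\overline{F(X)}\bigr)=\rho(M/K(X))=\rho_e(M)$, which is exactly the paper's chain of equalities. The direct norm-estimate route you propose as a backup would also have to control approximation by finite-rank operators uniformly, which is delicate precisely when the approximation property fails; the radical-theoretic route avoids that issue entirely.
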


\begin{proof}
As $K\left(  X\right)  $ is a bicompact algebra by \cite{Vala64}, the algebra
$K(X)/\overline{F(X)}$ is also bicompact. As spectral projections of compact
operators are in $F(X)$, it is easy to see that $K(X)/\overline{F(X)}$
consists of quasinilpotents. Then it is compactly quasinilpotent by Theorem
\ref{inf1}. Therefore $K\left(  X\right)  /\overline{F\left(  X\right)  }$ is
a compactly quasinilpotent ideal of $B\left(  X\right)  /\overline{F\left(
X\right)  }$. Using Theorem \ref{iness} applied to $J=K\left(  X\right)
/\overline{F\left(  X\right)  }$, we have that
\begin{align*}
\rho_{f}\left(  M\right)   &  =\rho\left(  M/\overline{F\left(  X\right)
}\right)  =\rho\left(  \left(  M/\overline{F\left(  X\right)  }\right)
/\left(  K\left(  X\right)  /\overline{F\left(  X\right)  }\right)  \right) \\
&  =\rho\left(  M/K\left(  X\right)  \right)  =\rho_{e}\left(  M\right)
\end{align*}
for a precompact subset $M$ of $B\left(  X\right)  $.
\end{proof}

\subsection{The largest BW-ideal problem and topological radicals}

Let $A$ be a Banach algebra. As it was already noted, the set of all
$BW$-ideals has maximal elements. However, it is not known whether
$\overline{I+J}\in BW(A)$ if $I,J\in BW(A)$. So the problem of existence of
the largest $BW$-ideal is open.

On the other hand, the largest BW-ideal problem disappears if one only
consideres\textbf{ } ideals defined by some natural properties --- as, for
example, the ideals $\mathcal{R}_{\mathrm{hc}}\left(  A\right)  $,
$\mathcal{R}_{\mathrm{hf}}\left(  A\right)  $, $\mathcal{R}_{\mathrm{cq}%
}\left(  A\right)  $ and $\mathcal{R}_{\mathrm{sc}}\left(  A\right)  $ defined
in Theorem \ref{mainRad}. To formulate this precisely we turn to the theory of
topological radicals. We recall some definitions and results of this theory; a
reader can refer to the works \cite{D97, ShT05r, KShT09, ShT10, KShT12, ShT12,
ShT14, CT16} for additional information.

In what follows the term \textit{ideal} will mean \textit{a two-sided ideal}.
In general, radicals can be defined on classes of rings and algebras; the
topological radicals are defined on classes of normed algebras.
A radical is an \textit{ideal map},\textit{ }i.e., a map that assigns to each
algebra its ideal, while a topological radical is a \textit{closed ideal map},
it assigns to a normed algebra its closed ideal. In correspondence with our
subject here we restrict our attention to the class of all Banach algebras.

We begin with the most important and convenient class of topological radicals.
A\textit{ hereditary topological radical} on the class of all Banach algebras
is a closed ideal map $\mathcal{P}$ which assigns to each Banach algebra $A$ a
closed two-sided ideal $\mathcal{P}(A)$ of $A$ and satisfies the following conditions:

\begin{itemize}
\item[$\left(  \mathrm{H1}\right)  $] $f(\mathcal{P}(A))\subset\mathcal{P}(B)$
for a continuous surjective homomorphism $f:A\longrightarrow B$;

\item[$\left(  \mathrm{H2}\right)  $] $\mathcal{P}(A/\mathcal{P}(A))=\left(
0\right)  $;

\item[$\left(  \mathrm{H3}\right)  $] $\mathcal{P}(J)=J\cap\mathcal{P}(A)$ for
any ideal $J$ of $A$.
\end{itemize}

It can be seen from $\left(  \mathrm{H2}\right)  $ that every radical
$\mathcal{P}$ accumulates some special property in the ideal $\mathcal{P}%
\left(  A\right)  $ of an algebra $A$ which is called \textit{ the
$\mathcal{P}$-radical }of $A$.

For the proof of the following theorem see \cite[Theorem 4.25]{ShT05r},
{\large \cite[Theorems 3.58 and 3.59]{ShT10}, } {\large \cite[Section
8]{ShT14}}.

\begin{theorem}
The maps $\mathcal{R}_{\mathrm{cq}}$\emph{: }$A\longmapsto\mathcal{R}%
_{\mathrm{cq}}\left(  A\right)  $, $\mathcal{R}_{\mathrm{hc}}$\emph{:
}$A\longmapsto\mathcal{R}_{\mathrm{hc}}\left(  A\right)  $, $\mathcal{R}%
_{\mathrm{hf}}$\emph{: }$A\longmapsto\mathcal{R}_{\mathrm{hf}}\left(
A\right)  $ and $\mathcal{R}_{\mathrm{sc}}$: $A\longmapsto\mathcal{R}%
_{\mathrm{sc}}\left(  A\right)  $ are hereditary topological radicals.
\end{theorem}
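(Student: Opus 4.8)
The plan is to verify the axioms $(\mathrm{H1})$--$(\mathrm{H3})$ for all four maps at once, using that each is, by Theorem \ref{mainRad}, of the form $A\longmapsto\mathcal{R}_{\mathcal{Q}}(A):=$ the largest closed ideal of $A$ lying in a fixed class $\mathcal{Q}$ of Banach algebras, where $\mathcal{Q}$ is the class of compactly quasinilpotent (resp.\ hypocompact, hypofinite, scattered) algebras. I would first isolate three closure properties of such a class $\mathcal{Q}$: $(\alpha)$ \emph{hereditariness} --- a closed ideal of an algebra in $\mathcal{Q}$ again lies in $\mathcal{Q}$; $(\beta)$ \emph{image-stability} --- if $f\colon A\to B$ is a continuous homomorphism with dense range and $A\in\mathcal{Q}$, then $B\in\mathcal{Q}$ (in particular $\mathcal{Q}$ is closed under quotients by closed ideals); $(\gamma)$ \emph{extension-stability} --- if $J$ is a closed ideal of $A$ with $J\in\mathcal{Q}$ and $A/J\in\mathcal{Q}$, then $A\in\mathcal{Q}$. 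Together with the transfinite-chain stability already used in the proof of Theorem \ref{mainRad}, these suffice, and I would derive $(\mathrm{H1})$--$(\mathrm{H3})$ from them formally before turning to the four concrete classes.

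Granting $(\alpha)$--$(\gamma)$, the radical axioms unwind as follows. For $(\mathrm{H1})$, if $f\colon A\to B$ is continuous and surjective, then $\overline{f(\mathcal{R}_{\mathcal{Q}}(A))}$ is a closed ideal of $B$ which, being the closure of the dense-range image of $\mathcal{R}_{\mathcal{Q}}(A)\in\mathcal{Q}$, lies in $\mathcal{Q}$ by $(\beta)$; maximality of $\mathcal{R}_{\mathcal{Q}}(B)$ then gives $f(\mathcal{R}_{\mathcal{Q}}(A))\subseteq\mathcal{R}_{\mathcal{Q}}(B)$. For $(\mathrm{H2})$, were $\mathcal{R}_{\mathcal{Q}}(A/\mathcal{R}_{\mathcal{Q}}(A))$ nonzero, its preimage in $A$ would be a closed ideal strictly larger than $\mathcal{R}_{\mathcal{Q}}(A)$, and an extension of $\mathcal{R}_{\mathcal{Q}}(A)\in\mathcal{Q}$ by an algebra in $\mathcal{Q}$, hence in $\mathcal{Q}$ by $(\gamma)$, contradicting maximality. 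For $(\mathrm{H3})$, the inclusion $J\cap\mathcal{R}_{\mathcal{Q}}(A)\subseteq\mathcal{R}_{\mathcal{Q}}(J)$ is immediate from $(\alpha)$ applied to the closed ideal $J\cap\mathcal{R}_{\mathcal{Q}}(A)$ of $\mathcal{R}_{\mathcal{Q}}(A)$ together with maximality of $\mathcal{R}_{\mathcal{Q}}(J)$; for the reverse inclusion one shows that $\mathcal{R}_{\mathcal{Q}}(J)$ is an ideal of $A$, after which maximality of $\mathcal{R}_{\mathcal{Q}}(A)$ finishes. That last step is the standard point at which radical theory uses $(\alpha)$ and $(\gamma)$ jointly --- the ideal of $A$ generated by $\mathcal{R}_{\mathcal{Q}}(J)$ is contained in $J$ and is itself seen to lie in $\mathcal{Q}$ --- and I would simply cite \cite{ShT05r,ShT10} for it.

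The remaining, and substantive, task is to verify $(\alpha)$--$(\gamma)$ for the four classes. For $\mathcal{R}_{\mathrm{cq}}$, property $(\gamma)$ is exactly Theorem \ref{iness}: if $J$ and $A/J$ are compactly quasinilpotent, then $\rho(M)=\rho(M/J)=0$ for every precompact $M\subseteq A$, so $A$ is compactly quasinilpotent; properties $(\alpha)$ and $(\beta)$ for compact quasinilpotence, and chain-stability, are the content of \cite{ShT05r}. For $\mathcal{R}_{\mathrm{hc}}$ and $\mathcal{R}_{\mathrm{hf}}$, property $(\beta)$ is essentially the definition --- a quotient of a quotient is a quotient, and ``possessing a nonzero compact (resp.\ finite-rank) element'' is intrinsic --- while $(\alpha)$ and $(\gamma)$ rest on the bicompact-block decomposition of Proposition \ref{phc1}, on Vala's characterization of compact elements, and on Theorem \ref{inf1}. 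For $\mathcal{R}_{\mathrm{sc}}$ one invokes the implication ``hypocompact $\Rightarrow$ scattered'' together with the structure theory of scattered Banach algebras of \cite{ShT14}, where $(\alpha)$--$(\gamma)$ for the scattered class are proved.

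I expect the genuine obstacle to be extension-stability $(\gamma)$ for the two ``soft'' classes. For compact quasinilpotence it is no less than the inessentiality Theorem \ref{iness}, whose proof is the real work. For scatteredness the difficulty is spectral: one must control $\sigma_A(a)$ in terms of $\sigma_{A/J}(a+J)$ plus a countable ``boundary'' contributed by $J$, and the spectrum is not well behaved under extensions of general Banach algebras --- this is where the bulk of \cite{ShT14} is spent. A secondary subtlety, already present for $(\beta)$ in the compactly quasinilpotent case, is that the image of a precompact set under a quotient map need not possess a precompact preimage, so quotient-stability of compact quasinilpotence is itself not a formality; it is handled by the joint spectral radius estimates of \cite{ShT05r}.
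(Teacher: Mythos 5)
The paper does not actually present a proof of this statement at all: it simply sends the reader to \cite[Theorem 4.25]{ShT05r}, \cite[Theorems 3.58, 3.59]{ShT10} and \cite[Section 8]{ShT14}. Your reconstruction via a radical-class template --- $\mathcal{R}_{\mathcal{Q}}(A)$ is the largest closed ideal lying in a class $\mathcal{Q}$, and $(\mathrm{H1})$--$(\mathrm{H3})$ are deduced from closure properties $(\alpha)$--$(\gamma)$ of $\mathcal{Q}$ --- is the standard organizing scheme and is indeed how those references proceed, so the overall route is right. One point where your sketch is imprecise is the hard inclusion in $(\mathrm{H3})$. You assert that the closed ideal $\widetilde{R}$ of $A$ generated by $R:=\mathcal{R}_{\mathcal{Q}}(J)$ ``is itself seen to lie in $\mathcal{Q}$'' by $(\alpha)$ and $(\gamma)$ jointly, but $(\alpha)$ plays no role there and $(\gamma)$ alone does not suffice. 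What is actually used is the ADS-type computation $\widetilde{R}^{3}\subseteq J\widetilde{R}J\subseteq R$, so that $\widetilde{R}/R$ is nilpotent, together with the further closure property that $\mathcal{Q}$ \emph{contains all nilpotent Banach algebras}; only then does extension-stability $(\gamma)$ give $\widetilde{R}\in\mathcal{Q}$ and hence $\widetilde{R}=R$ by maximality. That additional property does hold for all four classes here (a nonzero nilpotent algebra has a nonzero annihilator element $a$, for which $\mathrm{L}_a\mathrm{R}_a=0$ is finite-rank, so nilpotent algebras are hypofinite; they are trivially compactly quasinilpotent and scattered), but it is a genuine ingredient of the ADS lemma and should be stated explicitly rather than folded into $(\alpha)$. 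Apart from this, and given that you defer the substantive verifications of $(\alpha)$--$(\gamma)$ to the same sources the paper cites, the proposal is sound and commensurate in rigor with what the paper itself provides.
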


The maps $\mathcal{R}_{\text{hc}}$, $\mathcal{R}_{\text{h}\mathrm{f}}$,
$\mathcal{R}_{\text{cq}}$ and $\mathcal{R}_{\text{sc}}$ are called the
\textit{hypocompact, hypofinite, compactly quasinilpotent} and
\textit{scattered radical} respectively.

It follows immediately from Axiom (H3) that hereditary radicals satisfy the
conditions:\smallskip

\begin{itemize}
\item[$\left(  \mathrm{I1}\right)  $] $\mathcal{P}(\mathcal{P}(A))=\mathcal{P}%
(A)$;

\item[$\left(  \mathrm{I2}\right)  $] $\mathcal{P}(J)$ of an ideal $J$ of $A$
is an ideal of $A$ which is contained in the radical $\mathcal{P}%
(A)$.\smallskip
\end{itemize}

If a closed ideal map $\mathcal{P}$ on the class of all Banach algebras
satisfies (H1), (H2) and, instead of (H3), also (I1) and (I2) then
$\mathcal{P}$ is called a \textit{topological radical} (see \cite{D97}).

If an ideal map [a closed ideal map] $\mathcal{P}$ satisfies (H1), it is
called a \textit{preradical }[a \textit{topological preradical}].

A closed ideal map $\mathcal{P}$ is called an \textit{under topological
radical} (UTR) if it satisfies all axioms of topological radicals, besides
possibly of $\left(  \mathrm{H2}\right)  $, and an \textit{over topological
radical} (OTR) if it satisfies all axioms, apart from possibly of $\left(
\mathrm{I1}\right)  $ (see \cite[Definition 6.2]{D97})).

Given a preradical $\mathcal{P},$ an algebra $A$ is called $\mathcal{P}%
$\textit{-radical} if {\large $A=\mathcal{P}(A)$,} and $\mathcal{P}%
$\textit{-semisimple} if {\large $\mathcal{P}$}$(A)=0$. It follows from
$\left(  \mathrm{H1}\right)  $ for a topological preradical $\mathcal{P}$ that
$\mathcal{P}$-radical and $\mathcal{P}$-semisimple algebras are invariant with
respect to topological isomorphisms.

Let $\mathcal{P}$ be a topological radical. It follows easily from the
definition that quotients of $\mathcal{P}$-radical algebras are $\mathcal{P}%
$-radical, and ideals of {\large $\mathcal{P}$}-semisimple algebras are
{\large $\mathcal{P}$-}semisimple. Moreover, the class of all $\mathcal{P}%
$-radical ($\mathcal{P}$-semisimple) algebras is stable with respect to
extensions: \textit{If }$J$ is a $\mathcal{P}$\textit{-radical }($\mathcal{P}%
$-semisimple)\textit{ ideal} of $A$ \textit{and the quotient} $A/J$\textit{ is
}$\mathcal{P}$\textit{-radical }($\mathcal{P}$-semisimple)\textit{ then }%
$A$\textit{ itself is also }$\mathcal{P}$\textit{-radical }($\mathcal{P}%
$-semisimple)\textit{.}

The proof of the following properties \textit{of transfinite stability} can be
found in \cite[Theorem 4.18]{ShT14}.

\begin{proposition}
Let $\mathcal{P}$ be a topological radical, $A$ a Banach algebra, and let
$(I_{\alpha})_{\alpha\leq\gamma}$ and $(J_{\alpha})_{\alpha\leq\gamma}$ be
decreasing and increasing transfinite\textbf{ }chains of closed ideals of $A$. Then

\begin{enumerate}
\item If $A/I_{1}$ and all quotients $I_{\alpha}/I_{\alpha+1}$ are
$\mathcal{P}$-semisimple then $A/I_{\gamma}$ is $\mathcal{P}$-semisimple;

\item If $J_{1}$ and all quotients $J_{\alpha+1}/J_{\alpha}$ are $\mathcal{P}%
$-radical then $J_{\gamma}$ is $\mathcal{P}$-radical.
\end{enumerate}
\end{proposition}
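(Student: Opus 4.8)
The plan is to prove both clauses by transfinite induction on $\gamma$, exploiting the fact that they are dual to one another and that the one-step case --- the extension-stability of $\mathcal{P}$-radicality and of $\mathcal{P}$-semisimplicity --- has already been recorded just before the statement.

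For clause (2) I would show by induction that $J_{\alpha}$ is $\mathcal{P}$-radical for every $\alpha\le\gamma$. The base case $\alpha=1$ is the hypothesis. At a successor ordinal, $J_{\alpha}$ is a closed ideal of $J_{\alpha+1}$ that is $\mathcal{P}$-radical by the inductive hypothesis, while the quotient $J_{\alpha+1}/J_{\alpha}$ is $\mathcal{P}$-radical by assumption; hence $J_{\alpha+1}$ is $\mathcal{P}$-radical by extension-stability. At a limit ordinal $\beta$ I would invoke axiom $(\mathrm{I2})$: each $J_{\alpha}$ with $\alpha<\beta$ is an ideal of $J_{\beta}$ with $\mathcal{P}(J_{\alpha})=J_{\alpha}$, so $J_{\alpha}=\mathcal{P}(J_{\alpha})\subseteq\mathcal{P}(J_{\beta})$. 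Since $\mathcal{P}(J_{\beta})$ is closed and $J_{\beta}=\overline{\cup_{\alpha<\beta}J_{\alpha}}$, this forces $J_{\beta}\subseteq\mathcal{P}(J_{\beta})\subseteq J_{\beta}$, i.e. $J_{\beta}$ is $\mathcal{P}$-radical. Taking $\alpha=\gamma$ completes clause (2).

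For clause (1) I would dually show that $A/I_{\alpha}$ is $\mathcal{P}$-semisimple for every $\alpha\le\gamma$. The base case is the hypothesis. At a successor ordinal, $I_{\alpha}/I_{\alpha+1}$ is a closed ideal of $A/I_{\alpha+1}$ which is $\mathcal{P}$-semisimple by assumption, and $(A/I_{\alpha+1})/(I_{\alpha}/I_{\alpha+1})\cong A/I_{\alpha}$ is $\mathcal{P}$-semisimple by the inductive hypothesis, so $A/I_{\alpha+1}$ is $\mathcal{P}$-semisimple by extension-stability (using that $\mathcal{P}$-semisimplicity is a topological-isomorphism invariant). At a limit ordinal $\beta$, for each $\alpha<\beta$ the canonical continuous surjection $A/I_{\beta}\to A/I_{\alpha}$ sends $\mathcal{P}(A/I_{\beta})$ into $\mathcal{P}(A/I_{\alpha})=(0)$ by axiom $(\mathrm{H1})$, whence $\mathcal{P}(A/I_{\beta})\subseteq I_{\alpha}/I_{\beta}$; intersecting over $\alpha<\beta$ and using $\cap_{\alpha<\beta}I_{\alpha}=I_{\beta}$ yields $\mathcal{P}(A/I_{\beta})=(0)$. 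Taking $\alpha=\gamma$ completes clause (1).

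The routine ingredients --- the third-isomorphism-theorem identifications, the preservation of $\mathcal{P}$-radicality and $\mathcal{P}$-semisimplicity under quotients, sub-ideals and topological isomorphisms --- are all already stated in the text. The only genuinely delicate points are the two limit steps, and there is no serious obstacle beyond organizing the transfinite bookkeeping: the passage to the limit succeeds precisely because $\mathcal{P}$ returns a \emph{closed} ideal (so the dense-union argument in clause (2) closes up) and because $\mathcal{P}$ is monotone under continuous surjections via $(\mathrm{H1})$ (so clause (1) collapses through the intersection).
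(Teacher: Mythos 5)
Your proof is correct. The paper itself does not reproduce a proof of this proposition but cites \cite[Theorem 4.18]{ShT14}; your argument is precisely the natural transfinite induction one would expect there: the base and successor cases reduce to the extension-stability statement recorded just above the proposition, the limit step for the increasing chain uses $(\mathrm{I2})$ together with the closedness of $\mathcal{P}(J_{\beta})$ to absorb the dense union, and the limit step for the decreasing chain uses $(\mathrm{H1})$ to push $\mathcal{P}(A/I_{\beta})$ into each $I_{\alpha}/I_{\beta}$ and then intersects. All the auxiliary facts you invoke (quotient and ideal stability, topological-isomorphism invariance, $\cap_{\alpha<\beta}(I_{\alpha}/I_{\beta})=(0)$ when $I_{\beta}=\cap_{\alpha<\beta}I_{\alpha}$) are routine and are indeed available from the surrounding text.
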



\section{Around joint spectral radius formulas and radicals}

\subsection{Comparison of joint spectral radius formulas}

It follows from Theorem \ref{hcf} that for any Banach algebra $A$ and
precompact set $M\subseteq A$, the equality%
\begin{equation}
\rho(M)=\max\{\rho(M/\mathcal{R}_{\mathrm{hc}}(A)),r(M)\} \label{aGBWF}%
\end{equation}
holds. Since $\mathcal{R}_{\mathrm{hf}}(A)\subseteq\mathcal{R}_{\mathrm{hc}%
}(A)$, we certainly have%
\begin{equation}
\rho(M)=\max\{\rho(M/\mathcal{R}_{\mathrm{hf}}(A)),r(M)\}, \label{rhff}%
\end{equation}
for any precompact set in $A.$ Obviously $\overline{F(X)}\subseteq
\mathcal{R}_{\mathrm{hf}}(B(X)))\subseteq\mathcal{R}_{\mathrm{hc}}(B(X))$, so
the inequalities
\begin{equation}
\rho(M/\mathcal{R}_{\mathrm{hc}}(B(X)))\leq\rho(M/\mathcal{R}_{\mathrm{hf}%
}(B(X)))\leq\rho_{f}(M)=\rho_{e}(M) \label{Kineq}%
\end{equation}
are always true for all precompact $M\subset B(X)$; recall that $\rho
_{f}(M)=\rho_{e}(M)$ by Theorem \ref{morr2}.

The inequality $\rho(M/\mathcal{R}_{\mathrm{hc}}(B(X)))\leq\rho_{f}(M)$ in
(\ref{Kineq}) can be strict. For example, if $X$ is an Argyros-Haydon space
then $\rho(M/\mathcal{R}_{\mathrm{hc}}(B(X)))=0$ for each precompact set
$M\subseteq B(X)$ while $\rho_{f}(M)$ can be non-zero by virtue of
semisimplicity $K\left(  X\right)  $). This shows that \textit{even in the
operator case the joint spectral radius formula }(\ref{aGBWF})\textit{ is
stronger than the generalized BW-formula} (\ref{GBWF}).

In general, the inequality $\rho(M/\mathcal{R}_{\mathrm{hc}}(A))\leq
\rho(M/\mathcal{R}_{\mathrm{hf}}(A))$ can be also strict. To see this, let $V$
be the radical compact Banach algebra $\ell_{1}\left(  w\right)  $ considered
in Proposition \ref{radNOTcomp}. As we saw, $V$ has no non-zero nilpotent
elements. Therefore the only finite-rank element $v$ in $V$ is $0$. Indeed,
the multiplication operator L$_{v}$R$_{v}=$ L$_{v^{2}}$ is quasinilpotent. So
if it has finite rank then it is nilpotent:
\[
\text{L}_{v^{2}}^{m}=0.
\]
Applying the operator L$_{v^{2}}^{m}$ to $v$ we have that $v^{2m+1}=0$, i.e.,
$v$ is nilpotent, whence $v=0$.

Let now $A$ be the unitization of $V$. Since all finite-rank elements of $A$
must lie in $V$, it follows from the above that
\[
\mathcal{R}_{\mathrm{hf}}(V)=\mathcal{R}_{\mathrm{hf}}(A)=\left(  0\right)  .
\]
On the other hand, $A$ is hypocompact, whence $A=\mathcal{R}_{\mathrm{hc}}%
(A)$. For $M=\left\{  1\right\}  $ we have that
\[
\rho(M/\mathcal{R}_{\mathrm{hc}}(A))=0\neq1=\rho(M/\mathcal{R}_{\mathrm{hf}%
}(A)).
\]

As usual, in the class of all C*-algebras the situation is simpler.

\begin{theorem}
If $A$ is a C*-algebra then $\mathcal{R}_{\mathrm{hf}}(A)=\mathcal{R}%
_{\mathrm{hc}}(A)$.
\end{theorem}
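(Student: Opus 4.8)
The plan is to show $\mathcal{R}_{\mathrm{hf}}(A)=\mathcal{R}_{\mathrm{hc}}(A)$ for a C*-algebra $A$ by proving the seemingly stronger statement that in a C*-algebra every non-zero compact element generates a non-zero \emph{finite-rank} element; once we know this, every hypocompact C*-algebra is hypofinite (since any non-zero quotient $A/J$ is again a C*-algebra and has a non-zero compact element, hence a non-zero finite-rank element), and so $\mathcal{R}_{\mathrm{hc}}(A)\subseteq\mathcal{R}_{\mathrm{hf}}(A)$; the reverse inclusion is trivial. By the hereditary radical axiom (H3) it is harmless to pass to the closed ideal $I=\mathcal{R}_{\mathrm{hc}}(A)$ and work there, and to replace $I$ by the closed subalgebra generated by a single compact element.

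The key structural input is Vala's characterization together with the good behaviour of compactness under the C*-structure. First I would recall that if $a\in A$ is compact then $\mathrm{L}_a\mathrm{R}_a$ is a compact operator on $A$; applying this with $a$ replaced by $a^*a$ (note $a^*a$ is again compact, since the involution is isometric and $\mathrm{L}_{a^*a}\mathrm{R}_{a^*a}$ factors through $\mathrm{L}_a\mathrm{R}_a$ up to bounded maps) we may assume $a=h$ is a \emph{positive} compact element. Then I would examine the operator $x\mapsto hxh$ on $A$: its compactness forces strong finiteness on the spectral theory of $h$. Concretely, consider the hereditary C*-subalgebra $\overline{hAh}$; compactness of $\mathrm{L}_h\mathrm{R}_h$ means the unit ball of $A$ is mapped to a precompact subset of $\overline{hAh}$, and in particular $h^3 = h\cdot h\cdot h$ sits inside a set where the continuous functional calculus of $h$ must have finite-dimensional ``spectral tails''. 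The cleanest route: show that the spectrum of $h$ is finite, or has $0$ as its only accumulation point with each non-zero spectral projection $p_\lambda$ (which lies in $A$ by holomorphic functional calculus, since it is an isolated point of the spectrum) being itself a compact element; then $p_\lambda$ is a compact \emph{idempotent}, and a compact idempotent $p$ satisfies that $\mathrm{L}_p\mathrm{R}_p$ is a compact \emph{projection} on $A$, forcing $pAp$ to be finite-dimensional, so $p$ (being a self-adjoint projection, hence of norm one in the finite-dimensional C*-algebra $pAp$) is a non-zero finite-rank element.

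The main obstacle I expect is the step showing that a non-zero positive compact element $h$ produces a non-zero isolated point in its spectrum (equivalently, a non-zero compact idempotent in $A$). If the spectrum of $h$ were, say, an interval $[0,1]$, then no non-zero spectral projection would lie in $A$, and one would need to rule this out from compactness of $\mathrm{L}_h\mathrm{R}_h$. Here is how I would close it: for $\varepsilon>0$ let $g_\varepsilon$ be the continuous function that is $0$ on $[0,\varepsilon]$, $1$ on $[2\varepsilon,\infty)$, linear in between, and set $p_\varepsilon=g_\varepsilon(h)\in A$; then $\{p_\varepsilon hx h p_\varepsilon : \|x\|\le 1\}$ is precompact (being a continuous image of the precompact set $hX_\odot h$), but it also contains, via functional calculus, a copy of the unit ball of the C*-algebra $C_0((\varepsilon,\|h\|]\cap\mathrm{sp}(h))$ acting by multiplication — and a multiplication algebra on an infinite compact-metrizable-plus-point space is \emph{not} norm-precompact unless that portion of the spectrum is finite. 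Hence $\mathrm{sp}(h)\cap(\varepsilon,\infty)$ is finite for every $\varepsilon>0$, so each non-zero point of $\mathrm{sp}(h)$ is isolated, its Riesz projection lies in $A$, and we are done as above. I would also remark that this argument in fact reproves, for C*-algebras, that hypocompact and hypofinite coincide, consistent with the earlier observation that every hypocompact algebra is scattered.
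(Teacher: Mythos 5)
Your proposal is correct and reaches the theorem by a route that is organized rather differently from the paper's. The paper argues element by element: it first shows $\mathcal{R}_{\mathrm{hf}}(A)$ contains every compact element $a$ (spectral projections of $a^{\ast}a$ are finite-rank, $a^{\ast}a$ is in their closed span, and the closed ideal generated by $a^{\ast}a$ contains $a$ --- the last step cites Pedersen, Prop.\ 1.4.5), and then climbs the transfinite chain of Proposition \ref{phc1} by induction. You instead argue at the level of the class: a hypocompact C*-algebra is hypofinite, because each non-zero quotient is again a hypocompact C*-algebra, hence has a non-zero compact element, hence a non-zero positive compact element $h$, hence a non-zero finite-rank spectral projection; so $\mathcal{R}_{\mathrm{hc}}(A)$, being a hypofinite ideal, is contained in $\mathcal{R}_{\mathrm{hf}}(A)$ by the very definition of $\mathcal{R}_{\mathrm{hf}}$ as the largest hypofinite ideal. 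This dispenses with Pedersen's lemma and with the explicit transfinite induction. Both proofs turn on the same fact --- a positive compact element of a C*-algebra has $0$ as its only accumulation point in the spectrum, and each non-zero spectral projection is a finite-rank element --- which the paper treats as known; your functional-calculus argument with the cut-offs $g_{\varepsilon}$ and the ``multiplication algebra is not norm-precompact'' observation actually supplies a proof of it, which is a genuine plus.

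Two small cautions. First, your parenthetical justification that $\mathrm{L}_{a^{\ast}a}\mathrm{R}_{a^{\ast}a}$ ``factors through $\mathrm{L}_{a}\mathrm{R}_{a}$ up to bounded maps'' is not literally accurate: what one obtains is $\mathrm{L}_{a^{\ast}a}\mathrm{R}_{a^{\ast}a}=\mathrm{L}_{a^{\ast}}\mathrm{R}_{a}\circ\mathrm{L}_{a}\mathrm{R}_{a^{\ast}}$, and $\mathrm{L}_{a}\mathrm{R}_{a^{\ast}}$ is not $\mathrm{L}_{a}\mathrm{R}_{a}$; that $a$ compact implies $a^{\ast}a$ compact in a C*-algebra is true and is used without proof by the paper too, but your one-line argument for it should be replaced by a reference or a correct proof. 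Second, the opening remark that one may ``replace $I$ by the closed subalgebra generated by a single compact element'' would by itself not give finite rank relative to the ambient algebra (finite-rank is defined via $\mathrm{L}_{a}\mathrm{R}_{a}$ on $A$, and $pAp$ could a priori be larger than $pC^{\ast}(h)p$); happily your actual argument works in the ambient algebra by showing $pAp$ is finite-dimensional, so that reduction is not in fact used.
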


\begin{proof}
Indeed, if an element $a$ of $A$ is compact then $a^{\ast}a$ is compact and
therefore its spectral projections are finite-rank elements and therefore
belong to $\mathcal{R}_{\text{hf}}(A)$. Since $a^{\ast}a$ is a limit of linear
combinations of its spectral projections we have that
\[
a^{\ast}a\in\mathcal{R}_{\text{hf}}(A).
\]
But it is known (see for example \cite[Proposition 1.4.5]{Ped}) that the
closed ideal generated by $a^{\ast}a$ contains $a$. Thus $\mathcal{R}%
_{\text{hf}}(A)$ contains all compact elements of $A$. Now let $(J_{\alpha
})_{\alpha\leq\gamma}$ be an increasing transfinite chain of closed ideals
with bicompact quotients, and $J_{\gamma}=\mathcal{R}_{\text{hc}}(A)$. Assume
by induction that $J_{\alpha}$ are contained in $\mathcal{R}_{\text{hf}}(A)$
for all $\alpha<\lambda$. If the ordinal $\lambda$ is limit then clearly
$J_{\lambda}$ is also contained in $\mathcal{R}_{\mathrm{hf}}(A)$. Otherwise,
we have that $\lambda=\beta+1$ for some $\beta$. If $a\in J_{\lambda}$ then
$a/J_{\beta}$ is a compact element of $A/J_{\beta}$ whence $a/J_{\beta}%
\in\mathcal{R}_{\mathrm{hf}}(A/J_{\beta})$ and $a\in\mathcal{R}_{\mathrm{hf}%
}(A)$. Therefore, by induction, $\mathcal{R}_{\mathrm{hc}}(A)=J_{\gamma
}=\mathcal{R}_{\mathrm{hf}}(A)$.
\end{proof}

The class of hypocompact C$^{\ast}$-algebras is contained in the class of all
GCR algebras (algebras of type I) and this inclusion is strict: it suffices to
note that even the algebra $C([0,1])$ is not hypocompact. Moreover, there is
an analogue of (\ref{GBWF}) that holds for all C$^{\ast}$-algebras $A$
satisfying some natural restrictions on the space $\text{Prim}(A)$ of all
primitive ideals of $A$:
\begin{equation}
\rho(M)=\max\{\rho(M/\mathcal{R}_{\mathrm{gcr}}(A)),r(M)\}, \label{GBWFCstar}%
\end{equation}
where $\mathcal{R}_{\mathrm{gcr}}(A)$ is the largest GCR ideal of $A$. The map
$A\longmapsto\mathcal{R}_{\mathrm{gcr}}(A)$ is a hereditary topological
radical on the class of all C*-algebras. It follows from (\ref{GBWFCstar})
that any GCR-algebra is a Berger-Wang algebra. The proof and more information
can be found in \cite[Section 10]{ShT14}.

Apart from (\ref{aGBWF}), another version of the joint spectral radius formula
was established in \cite{ShT08}:
\begin{equation}
\rho(M)=\max\{\rho^{\chi}(M),r(M)\} \label{rxf}%
\end{equation}
holds for every precompact set $M$ in $A$, where $\rho^{\chi}(M)$ is defined
as $\rho_{\chi}(\mathrm{L}_{M}\mathrm{R}_{M})^{1/2}$. Unlike $\rho
(M/\mathcal{R}_{\mathrm{hc}}(A))$ and $\rho(M/\mathcal{R}_{\mathrm{hf}}(A)),$
the value $\rho_{\chi}(M)$ is not of the form $\rho(M/J)$, but it deserves
some\textbf{ } interest because it is natural to regard $\rho^{\chi}(M)$ as a
Banach algebraic analogue of $\rho_{\chi}(M)$. By Theorem \ref{morr} and
\cite[Lemma 4.7]{ShT12},
\begin{equation}
\rho^{\chi}(M)=\rho_{\chi}(\mathrm{L}_{M}\mathrm{R}_{M})^{1/2}=\rho
_{e}(\mathrm{L}_{M}\mathrm{R}_{M})^{1/2}\leq\rho\left(  M/J\right)  ^{1/2}%
\rho\left(  M\right)  ^{1/2} \label{x}%
\end{equation}
for every precompact set $M$ in $A$ and every bicompact ideal $J$ of $A$.

In general, $\rho^{\chi}(M)\neq\rho(M/\mathcal{R}_{\mathrm{hc}}(A))$. Indeed,
if $X$ is an Argyros-Haydon space \cite{ArH11} then $B(X)$ is a
one-dimensional extension of $K(X)$. So the algebra $B(X)$ is hypocompact and
$\rho(M/\mathcal{R}_{\mathrm{hc}}(B(X)))=0$ for each precompact set
$M\subseteq B(X)$. On the other hand, for $M=\{1\}$, we see that
$\mathrm{L}_{M}\mathrm{R}_{M}$ is the identity operator on the
infinite-dimensional space $B(X)$, whence $\rho_{\chi}(\mathrm{L}%
_{M}\mathrm{R}_{M})=1$ and $\rho^{\chi}(M)=1$.

\subsection{$BW$-radicals}

In line\textbf{ }with\textbf{ }the above discussion we are looking for such
radicals $\mathcal{P}$ that $\mathcal{P}(A)$ is a $BW$-ideal for each $A$; it
is natural to call them $BW$-\textit{radicals}. Clearly, we are interested in
\textquotedblleft large\textquotedblright\ $BW$-radicals, so that we have to
compare them.

The order for ideal maps, in particular for topological radicals, is
introduced in the usual way: $\mathcal{P}$ $\leq$ $\mathcal{R}$ means that
$\mathcal{P}\left(  A\right)  \subseteq$ $\mathcal{R}\left(  A\right)  $ for
every algebra $A$. For instance, it is obvious that
\[
\mathcal{R}_{\text{hf}}\text{ }\mathcal{\leq R}_{\text{hc}}\text{ and
}\mathcal{R}_{\text{cq}}\leq\text{ Rad},
\]
where $\mathrm{Rad}$ is the \textit{Jacobson radical} $A\longmapsto
\mathrm{Rad}(A)$ (recall that for a Banach algebra $A$, $\mathrm{Rad}(A)$ can
be defined as the largest ideal of $A$ consisting of quasinilpotents). It is
well known that $\mathrm{Rad}$ is hereditary.

As usual, we write $\mathcal{P}$ $<$ $\mathcal{R}$ if $\mathcal{P}$ $\leq$
$\mathcal{R}$ and there is an algebra $A$ such that $\mathcal{P}\left(
A\right)  \neq$ $\mathcal{R}\left(  A\right)  $. For example,
\[
\mathcal{R}_{\text{hf}}<\mathcal{R}_{\text{hc}}<\mathcal{R}_{\text{sc}}\text{
and Rad }<\mathcal{R}_{\text{sc}}.
\]

It is known that, for any family $\mathcal{F}$ of topological radicals, there
exists the smallest upper bound $\vee\mathcal{F}$ and the largest lower bound
$\wedge\mathcal{F}$ of $\mathcal{F}$ in the class of all topological radicals;
clearly, $\vee\mathcal{F}$ and $\wedge\mathcal{F}$ need not belong to
$\mathcal{F}$ itself. If $\mathcal{F}=\left\{  \mathcal{P},\mathcal{R}%
\right\}  $, we write $\mathcal{P}\vee\mathcal{R}$ for $\vee\mathcal{F}$ and
$\mathcal{P}\wedge\mathcal{R}$ for $\wedge\mathcal{F}$. We will describe later
a constructive way for obtaining the radicals $\vee\mathcal{F}$ and
$\wedge\mathcal{F}$.

The following theorem establishes that there is the largest $BW$-radical.

\begin{theorem}
\label{bw}\cite[Theorem 5.9]{ShT12} Let $\mathcal{F}$ be the family of all
$BW$-radicals and $\mathcal{R}_{\mathrm{bw}}=\vee\mathcal{F}$. Then
$\mathcal{R}_{\mathrm{bw}}$ is a $BW$-radical\emph{;} any topological radical
$\mathcal{P}\leq\mathcal{R}_{\mathrm{bw}}$ is a $BW$-radical.
\end{theorem}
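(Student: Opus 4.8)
The plan is to realize $\mathcal{R}_{\mathrm{bw}} = \vee\mathcal{F}$ as the union of a suitable transfinite chain and to apply Proposition~\ref{ESBW} (transfinite stability of $BW$-ideals) to deduce that $\mathcal{R}_{\mathrm{bw}}(A)$ is a $BW$-ideal for every $A$. Recall the standard construction of the join of topological radicals: one iterates the operation $(\mathcal{P},\mathcal{R}) \mapsto$ ``the radical whose value on $A$ is the preimage in $A$ of $\mathcal{P}(A/\mathcal{R}(A))$'' (together with the closure-under-increasing-unions step at limit ordinals), starting from any radical in $\mathcal{F}$; this stabilizes at some ordinal and produces $\vee\mathcal{F}$. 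Concretely, for a fixed Banach algebra $A$ one obtains an increasing transfinite chain $(J_\alpha)_{\alpha\le\gamma}$ of closed ideals with $J_\gamma = \mathcal{R}_{\mathrm{bw}}(A)$, where $J_0 = \mathcal{Q}_0(A)$ for some $\mathcal{Q}_0 \in \mathcal{F}$, each successor quotient $J_{\alpha+1}/J_\alpha$ equals $\mathcal{Q}_\alpha(A/J_\alpha)$ for some $\mathcal{Q}_\alpha \in \mathcal{F}$, and $J_\beta = \overline{\cup_{\alpha<\beta}J_\alpha}$ at limit ordinals.

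First I would assemble the ingredients. By hypothesis each $\mathcal{Q}\in\mathcal{F}$ is a $BW$-radical, so $\mathcal{Q}(B) \in BW(B)$ for every Banach algebra $B$; applying this with $B = A$ gives $J_0 \in BW(A)$, and applying it with $B = A/J_\alpha$ gives $J_{\alpha+1}/J_\alpha = \mathcal{Q}_\alpha(A/J_\alpha) \in BW(A/J_\alpha)$ for each $\alpha$. These are precisely the hypotheses of Proposition~\ref{ESBW}, which then yields $J_\gamma = \mathcal{R}_{\mathrm{bw}}(A) \in BW(A)$. Since $A$ was arbitrary, $\mathcal{R}_{\mathrm{bw}}$ is a $BW$-radical. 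The second assertion is then immediate: if $\mathcal{P}$ is any topological radical with $\mathcal{P}\le\mathcal{R}_{\mathrm{bw}}$, then for every $A$ we have $\mathcal{P}(A)\subseteq\mathcal{R}_{\mathrm{bw}}(A)$, and since a closed sub-ideal of a $BW$-ideal is again a $BW$-ideal (noted in the excerpt: if $I\subset J$, $J\in BW(A)$ then $I\in BW(A)$), it follows that $\mathcal{P}(A)\in BW(A)$, i.e. $\mathcal{P}$ is a $BW$-radical.

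The main obstacle, and the only place requiring genuine care, is the construction of $\vee\mathcal{F}$ as a transfinite chain with the stated successor and limit structure --- that is, checking that the iterated extension procedure is well-defined on the (possibly proper) class $\mathcal{F}$, that it produces a topological radical, and that this radical is in fact the least upper bound of $\mathcal{F}$. This is a known fact about the lattice of topological radicals (the existence of $\vee\mathcal{F}$ and $\wedge\mathcal{F}$ was already cited in the excerpt), and its proof is the transfinite-iteration argument sketched above; the key technical point is that at successor stages one stays inside the class of topological radicals (axioms (H1), (I1), (I2)) and that the closure operation at limit stages, combined with a cardinality bound, forces stabilization. Once the chain is in hand, the rest is the routine application of Proposition~\ref{ESBW} described above. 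I would therefore structure the write-up as: (i) recall/construct the transfinite chain realizing $\mathcal{R}_{\mathrm{bw}}(A)$; (ii) invoke $BW$-ness of each $\mathcal{Q}_\alpha$ to verify the hypotheses of Proposition~\ref{ESBW}; (iii) conclude $\mathcal{R}_{\mathrm{bw}}(A)\in BW(A)$; (iv) deduce the statement about dominated radicals from hereditarity of $BW$ under passing to sub-ideals.
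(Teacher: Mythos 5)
Your proposal is correct and follows essentially the same strategy that the paper indicates for its proof: realize $\mathcal{R}_{\mathrm{bw}}(A)=\vee\mathcal{F}(A)$ via a transfinite chain of closed ideals whose successor quotients are values of individual $BW$-radicals from $\mathcal{F}$ (rather than the sum $\mathtt{H}_{\mathcal{F}}$, which would run into the open problem about sums of $BW$-ideals), apply the transfinite stability of Proposition~\ref{ESBW}, and deduce the second assertion from the downward heredity of $BW$-ideals under passage to closed sub-ideals. You have also correctly isolated the one nontrivial technical point --- that the interleaved chain with single-radical successor steps indeed terminates at $\vee\mathcal{F}(A)=(\mathtt{H}_{\mathcal{F}})^{\ast}(A)$, which holds because both constructions produce the smallest closed ideal $J$ with $\mathcal{Q}(A/J)=(0)$ for every $\mathcal{Q}\in\mathcal{F}$.
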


The proof uses the structure of radical ideals in\textbf{ }$\vee\mathcal{F}$,
and transfinite stability of the class of $BW$-ideals (see Proposition
\ref{ESBW}).

To show the utility of $\mathcal{R}_{\mathrm{bw}}$, consider the following
example. It follows from Theorems \ref{hcf} and \ref{iness} that
$\mathcal{R}_{\mathrm{hc}}$ and $\mathcal{R}_{\mathrm{cq}}$ are $BW$-radicals.
So, for any Banach algebra $A$, $\mathcal{R}_{\mathrm{hc}}\left(  A\right)  $
and $\mathcal{R}_{\mathrm{cq}}\left(  A\right)  $ are $BW$-ideals. They can
differ; moreover, it can be deduced from Proposition \ref{radNOTcomp} that
there is a Banach algebra $A$ such that $\mathcal{R}_{\mathrm{hc}}\left(
A\right)  $ and $\mathcal{R}_{\mathrm{cq}}\left(  A\right)  $ are both
non-zero, but have zero intersection. The existence of $\mathcal{R}%
_{\mathrm{bw}}$ implies that $\overline{\mathcal{R}_{\mathrm{hc}}\left(
A\right)  +\mathcal{R}_{\mathrm{cq}}\left(  A\right)  }$ is a $BW$-ideal,
because both summands are contained in $\mathcal{R}_{\mathrm{bw}}(A)$. Now one
can further extend this BW-ideal by building an increasing transfinite chain
$\left(  J_{\alpha}\right)  $ of closed ideals such that

\begin{itemize}
\item $J_{0}=\left(  0\right)  $ and $J_{\alpha+1}/J_{\alpha}=$ $\overline
{\mathcal{R}_{\mathrm{hc}}\left(  A/J_{\alpha}\right)  +\mathcal{R}%
_{\mathrm{cq}}\left(  A/_{\alpha}\right)  }$ for all $\alpha$.
\end{itemize}

\noindent In the correspondence with Proposition \ref{ESBW} we conclude that
all $J_{\alpha}$ are $BW$-ideals. It is obvious that there is an ordinal
$\gamma$ such that $J_{\gamma+1}=J_{\gamma}$. It turns out\textbf{ }that
$J_{\gamma}=\left(  \mathcal{R}_{\mathrm{hc}}\vee\mathcal{R}_{\mathrm{cq}%
}\right)  \left(  A\right)  $. To see it and much more, we consider the
details of a construction of radicals\textbf{ }$\vee\mathcal{F}$ and
$\wedge\mathcal{F}$ in the following subsection. Of course, we have that
$\mathcal{R}_{\mathrm{hc}}\vee\mathcal{R}_{\mathrm{cq}}\leq\mathcal{R}%
_{\mathrm{bw}}$, so that the formula%
\begin{equation}
\rho\left(  M\right)  =\max\left\{  \rho\left(  M/\left(  \mathcal{R}%
_{\mathrm{hc}}\vee\mathcal{R}_{\mathrm{cq}}\right)  \left(  A\right)  \right)
,r\left(  M\right)  \right\}  \label{rm}%
\end{equation}
is valid, for any precompact set $M$ in $A$.

It seems that in the Banach algebra context the best candidate for the joint
spectral radius formula is {\large
\begin{equation}
\rho(M)=\max\{\rho(M/\mathcal{R}_{\mathrm{bw}}(A)),r(M)\}. \label{rbwf}%
\end{equation}
} But a priori there can exist a Banach algebra $A$ with non-trivial
$BW$-ideals and with $\mathcal{R}_{\mathrm{bw}}(A))=0$ --- the disadvantage of
formula (\ref{rbwf}) is that the largest $BW$-radical is defined not directly,
since the family of $BW$-radicals is not completely described. However, in
radical context the formula (\ref{rbwf}) is certainly optimal. In particular,
it is stronger\textbf{ } than formula (\ref{aGBWF}) because the largest
$BW$-radical contains the hypocompact radical for any Banach algebra, and the
inclusion can be strict as the above example shows.

In what follows we gather some facts for the better understanding of the
nature of the radical $\mathcal{R}_{\mathrm{bw}}$.

\subsection{Procedures and operations}

Here we describe some ways to construct radicals from preradicals that only
partially satisfy the axioms.

\textit{Procedures} are mappings from one class of ideal maps to another class
of ideal maps. The important examples are the following. If $\mathcal{P}$ and
$\mathcal{R}$ are topological preradicals satisfying $\left(  \mathrm{I1}%
\right)  $ and $\left(  \mathrm{I2}\right)  $, for any algebra $A$, let
$\left(  I_{\alpha}\right)  _{\alpha\leq\gamma}$ and $\left(  J_{\alpha
}\right)  _{\alpha\leq\delta}$ be transfinite chains such that
\begin{equation}
J_{\alpha}=A,\;J_{\alpha+1}=\mathcal{P}\left(  J_{\alpha}\right)  ;\text{
\ }I_{0}=\left(  0\right)  ,\;I_{\alpha+1}=q_{I_{\alpha}}^{-1}\left(
\mathcal{R}\left(  A/I_{\alpha}\right)  \right)  , \label{ja}%
\end{equation}
where $q_{I_{\alpha}}^{{}}:$ $A\longrightarrow A/I_{\alpha}$ is the standard
quotient map. \textbf{ }Then the maps $\mathcal{P}_{\left(  \alpha\right)
^{\circ}}$: $A\longmapsto J_{\alpha}$ and $\mathcal{R}_{\left(  \alpha\right)
^{\ast}}$: $A\longmapsto I_{\alpha}$ are topological preradicals satisfying
$\left(  \mathrm{I1}\right)  $ and $\left(  \mathrm{I2}\right)  $. So
$\mathcal{P}\longmapsto\mathcal{P}_{\left(  \alpha\right)  ^{\circ}}$ and
$\mathcal{R}\longmapsto\mathcal{R}_{\left(  \alpha\right)  ^{\ast}}$ are
procedures ($\alpha$-\textit{superposition} and $\alpha$-\textit{convoluton
procedures}\texttt{)}. The transfinite chains of ideals in (\ref{ja})
stabilize at some steps $\gamma=\gamma\left(  A\right)  $ and $\delta
=\delta\left(  A\right)  $, that is,
\[
I_{\gamma}=I_{\gamma+1}\text{ \ and \ }J_{\delta+1}=J_{\delta}\text{.}%
\]
Set $\mathcal{P}^{\circ}$: $A\longmapsto J_{\delta}$ and $\mathcal{R}^{\ast}$:
$A\longmapsto I_{\gamma}$. Then $\mathcal{P}\longmapsto\mathcal{P}^{\circ}$
and $\mathcal{R}\longmapsto\mathcal{R}^{\ast}$ are called
\textit{superposition} and \textit{convolution} \textit{procedures},\textit{
}respectively;\textit{ }$\mathcal{P}^{\circ}$ \textit{satisfies} $\left(
\mathrm{I1}\right)  $ and $\mathcal{R}^{\ast}$ \textit{satisfies} $\left(
\mathrm{H2}\right)  $ (see \cite[Theorems 6.6 and 6.10]{D97}).

The following two ways of getting new ideal maps are very useful in the
theory. If $\mathcal{F}$ is a family of UTRs then
\[
\mathtt{H}_{\mathcal{F}}\text{: }A\longmapsto\mathtt{H}_{\mathcal{F}}\left(
A\right)  :=\overline{\sum_{\mathcal{R}\in\mathcal{F}}\mathcal{R}\left(
A\right)  }%
\]
is a UTR; if $\mathcal{F}$ consists of OTRs then
\[
\mathtt{B}_{\mathcal{F}}\text{: }A\longmapsto\mathtt{B}_{\mathcal{F}}\left(
A\right)  :=\bigcap\limits_{\mathcal{R}\in\mathcal{F}}\mathcal{R}\left(
A\right)
\]
is an OTR (see \cite[Theorem 4.1]{ShT14}).

Now we extend the action of operations $\vee$ and $\wedge$ introduced in the
preceding subsection. Let $\mathcal{F}$ be a family of topological preradicals
satisfying $\left(  \mathrm{I1}\right)  $ and $\left(  \mathrm{I2}\right)  $.
Set
\begin{equation}
\vee\mathcal{F}=\left(  \mathtt{H}_{\mathcal{F}}\right)  ^{\ast}\text{ and
}\wedge\mathcal{F}=\left(  \mathtt{B}_{\mathcal{F}}\right)  ^{\circ}.
\label{fh}%
\end{equation}
Then $\vee\mathcal{F}$ is the smallest OTR larger than or equal to each
$\mathcal{P}\in\mathcal{F}$; and $\wedge\mathcal{F}$ is the largest UTR
smaller than or equal to each $\mathcal{P}\in\mathcal{F}$. In particular, if
$\mathcal{F}$ consists of UTRs then $\vee\mathcal{F}$ is the smallest
topological radical that is no less than each $\mathcal{P}\in\mathcal{F}$; if
$\mathcal{F}$ consists of OTRs then $\wedge\mathcal{F}$ is the largest
topological radical that does not exceed each $\mathcal{P}\in\mathcal{F}$ (see
\cite[Remark 4.2 and Corollary 4.3]{ShT14}).

\begin{theorem}
\cite[Theorem 8.15]{ShT14} \label{sc} $\mathrm{Rad}\vee\mathcal{R}%
_{\mathrm{hc}}=\mathcal{R}_{\mathrm{sc}}.$
\end{theorem}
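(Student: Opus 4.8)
\textbf{Proof plan for $\mathrm{Rad}\vee\mathcal{R}_{\mathrm{hc}}=\mathcal{R}_{\mathrm{sc}}$.}

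The plan is to prove the two inclusions separately. For $\mathrm{Rad}\vee\mathcal{R}_{\mathrm{hc}}\leq\mathcal{R}_{\mathrm{sc}}$, I would recall that $\vee\mathcal{F}$ for $\mathcal{F}=\{\mathrm{Rad},\mathcal{R}_{\mathrm{hc}}\}$ is, by \eqref{fh}, the smallest topological radical dominating both $\mathrm{Rad}$ and $\mathcal{R}_{\mathrm{hc}}$. Since $\mathcal{R}_{\mathrm{sc}}$ is itself a (hereditary, hence topological) radical, it suffices to check that $\mathrm{Rad}\leq\mathcal{R}_{\mathrm{sc}}$ and $\mathcal{R}_{\mathrm{hc}}\leq\mathcal{R}_{\mathrm{sc}}$. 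The second is already recorded in the excerpt ($\mathcal{R}_{\mathrm{hc}}<\mathcal{R}_{\mathrm{sc}}$, since every hypocompact algebra is scattered by \cite[Theorem 8.15]{ShT14}). For the first, one uses that $\mathrm{Rad}(A)$ consists of quasinilpotents, so every element of $\mathrm{Rad}(A)$ has spectrum $\{0\}$, which is certainly countable; hence $\mathrm{Rad}(A)$ is a scattered ideal of $A$ and therefore $\mathrm{Rad}(A)\subseteq\mathcal{R}_{\mathrm{sc}}(A)$. Combining, $\mathrm{Rad}\vee\mathcal{R}_{\mathrm{hc}}\leq\mathcal{R}_{\mathrm{sc}}$.

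For the reverse inclusion $\mathcal{R}_{\mathrm{sc}}\leq\mathrm{Rad}\vee\mathcal{R}_{\mathrm{hc}}$, the idea is to show that $\mathcal{R}_{\mathrm{sc}}(A)$ is $\bigl(\mathrm{Rad}\vee\mathcal{R}_{\mathrm{hc}}\bigr)$-radical for every $A$, equivalently that a scattered Banach algebra $B$ (apply with $B=\mathcal{R}_{\mathrm{sc}}(A)$, which is scattered by definition) satisfies $\bigl(\mathrm{Rad}\vee\mathcal{R}_{\mathrm{hc}}\bigr)(B)=B$. The structural input is that a scattered algebra admits an increasing transfinite chain of closed ideals $(K_\alpha)_{\alpha\leq\gamma}$ with $K_0=(0)$, $K_\gamma=B$ and each quotient $K_{\alpha+1}/K_\alpha$ equal to the sum (or an extension) of $\mathrm{Rad}(B/K_\alpha)$ and a hypocompact piece of $B/K_\alpha$ — this is exactly how the scattered radical is built up in \cite[Section 8]{ShT14}: one peels off $\mathcal{R}_{\mathrm{hc}}$ and $\mathrm{Rad}$ alternately until nothing is left, which is possible precisely because a scattered algebra that is both hypocompact-semisimple and Jacobson-semisimple must be zero. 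Since $\mathrm{Rad}(B/K_\alpha)$ and $\mathcal{R}_{\mathrm{hc}}(B/K_\alpha)$ both lie in $\bigl(\mathrm{Rad}\vee\mathcal{R}_{\mathrm{hc}}\bigr)(B/K_\alpha)$, each $K_{\alpha+1}/K_\alpha$ is $\bigl(\mathrm{Rad}\vee\mathcal{R}_{\mathrm{hc}}\bigr)$-radical. By transfinite stability of radical ideals (the Proposition on transfinite stability in the excerpt, part 2), $K_\gamma=B$ is $\bigl(\mathrm{Rad}\vee\mathcal{R}_{\mathrm{hc}}\bigr)$-radical, i.e. $B\subseteq\bigl(\mathrm{Rad}\vee\mathcal{R}_{\mathrm{hc}}\bigr)(B)$. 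Applying this to $B=\mathcal{R}_{\mathrm{sc}}(A)$ and using axiom $(\mathrm{I2})$ gives $\mathcal{R}_{\mathrm{sc}}(A)\subseteq\bigl(\mathrm{Rad}\vee\mathcal{R}_{\mathrm{hc}}\bigr)(A)$.

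The main obstacle is the second inclusion, and specifically justifying the transfinite decomposition of a scattered algebra into Jacobson-radical and hypocompact layers. One must argue that if $C$ is a nonzero scattered Banach algebra with $\mathrm{Rad}(C)=(0)$, then $C$ has a nonzero compact element, so that $\mathcal{R}_{\mathrm{hc}}(C)\neq(0)$; this is the scattered analogue of the statement that a semisimple scattered algebra has a nonzero socle-type element, and it rests on the spectral theory of scattered (semisimple) Banach algebras — isolated points of spectra give nontrivial idempotents, hence compact elements — established in \cite{ShT14}. Granting that lemma, the alternating peeling terminates by an ordinal-cardinality argument and the transfinite chain has the required quotients; the rest is a routine application of $(\mathrm{H2})$ for $\vee\mathcal{F}$ together with transfinite stability. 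I would present the argument in this order: (i) the easy inclusion via minimality of $\vee$ and $\mathrm{Rad},\mathcal{R}_{\mathrm{hc}}\leq\mathcal{R}_{\mathrm{sc}}$; (ii) the decomposition lemma for scattered algebras quoted from \cite{ShT14}; (iii) the transfinite-stability conclusion giving the reverse inclusion.
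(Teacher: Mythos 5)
Since the paper imports this theorem from \cite[Theorem 8.15]{ShT14} without presenting a proof of its own, the natural comparison is to the techniques the paper deploys for the closely related Theorems \ref{scbw} and \ref{herh}, and your plan agrees with them: minimality of $\vee$ for the easy inclusion, and Barnes' theorem (a nonzero semisimple scattered Banach algebra has nonzero socle, a hypofinite hence hypocompact ideal) for the reverse. Those are the right ingredients and the overall structure of your argument is sound.

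Two points need repair. First, your justification of $\mathcal{R}_{\mathrm{hc}}\leq\mathcal{R}_{\mathrm{sc}}$ quotes the very theorem being proved: the paper's remark that ``every hypocompact algebra is scattered'' is stated as a \emph{consequence} of \cite[Theorem 8.15]{ShT14}, so invoking it inside a proof of that theorem is circular. A self-contained argument must establish the containment independently, e.g.\ by showing that a compact element has at most countable spectrum (so that every bicompact algebra is scattered) and then propagating this to all hypocompact algebras via Proposition \ref{phc1} together with the transfinite stability of the radical class of scattered algebras. Second, the transfinite peeling you describe for the reverse inclusion is not wrong, but it re-derives work already encoded in the procedure $\vee\mathcal{F}=(\mathtt{H}_{\mathcal{F}})^{\ast}$. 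It is cleaner to set $\mathcal{Q}=\mathrm{Rad}\vee\mathcal{R}_{\mathrm{hc}}$ and $B=\mathcal{R}_{\mathrm{sc}}(A)$ and argue in one step from axiom $(\mathrm{H2})$: the quotient $B/\mathcal{Q}(B)$ is $\mathcal{Q}$-semisimple, hence both Jacobson-semisimple and $\mathcal{R}_{\mathrm{hc}}$-semisimple; if it were nonzero it would be a semisimple scattered algebra, so by Barnes' theorem it would have nonzero socle lying inside $\mathcal{R}_{\mathrm{hc}}(B/\mathcal{Q}(B))=(0)$, a contradiction. Hence $\mathcal{Q}(B)=B$, and $(\mathrm{I2})$ gives $\mathcal{R}_{\mathrm{sc}}(A)=B\subseteq\mathcal{Q}(A)$, completing the reverse inclusion without having to justify the termination of an explicit alternating chain.
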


If a family $\mathcal{F}$ consists of hereditary topological radicals then%
\[
\wedge\mathcal{F}=\mathtt{B}_{\mathcal{F}}%
\]
is the largest hereditary topological radical that does not exceed each
$P\in\mathcal{F}$ (see \cite[Lemma 3.2]{ShT12}).

As $\mathcal{R}_{\mathrm{hc}}$ and $\mathrm{Rad}$ are hereditary topological
radicals then it follows from Theorem \ref{inf1} that $\mathtt{B}_{\left\{
\mathcal{R}_{\mathrm{hc}},\mathrm{Rad}\right\}  }$ is a hereditary topological
radical $\mathcal{R}_{\mathrm{hc}}\wedge\mathrm{Rad}$ and $\mathcal{R}%
_{\mathrm{hc}}\wedge\mathrm{Rad}\leq\mathcal{R}_{\mathrm{cq}}$. It follows
from Proposition \ref{radNOTcomp} that
\begin{equation}
\mathtt{B}_{\left\{  \mathcal{R}_{\mathrm{hc}},\mathrm{Rad}\right\}
}=\mathcal{R}_{\mathrm{hc}}\wedge\mathrm{Rad}<\mathcal{R}_{\mathrm{cq}}.
\label{hcr}%
\end{equation}

\subsection{Convolution and superposition operations}

In this subsection we prove two useful\textbf{ }lemmas.

For an ideal map $\mathcal{P}$ and a closed ideal $I$ of a Banach algebra $A,$
it is convenient to define an ideal $\mathcal{P}\ast I$ of $A$ by setting
\[
\mathcal{P}\ast I=q_{_{I}}^{-1}\left(  \mathcal{P}\left(  A/I\right)  \right)
\]
where $q_{I}^{{}}$: $A\longrightarrow A/I$ is the standard quotient map.
Clearly, $I\subseteq\mathcal{P}\ast I$. If $\mathcal{P}$ and $\mathcal{R}$ are
topological preradicals satisfying $\left(  \mathrm{I1}\right)  $ and $\left(
\mathrm{I2}\right)  $, define the \textit{convolution} $\mathcal{P}%
\ast\mathcal{R}$ and \textit{superposition} $\mathcal{P}\circ\mathcal{R}$ by
\begin{equation}
\mathcal{P}\ast\mathcal{R}\left(  A\right)  =q_{\mathcal{R}}^{-1}\left(
\mathcal{P}\left(  A/\mathcal{R}\left(  A\right)  \right)  \right)  \text{ and
}\mathcal{P}\circ\mathcal{R}\left(  A\right)  =\mathcal{P}\left(
\mathcal{R}\left(  A\right)  \right)  \label{pr}%
\end{equation}
for every algebra $A,$ where $q_{\mathcal{R}}^{{}}$: $A\longrightarrow
A/\mathcal{R}\left(  A\right)  $ is the standard quotient map. Then
$\mathcal{P}\ast\mathcal{R}$ and $\mathcal{P}\circ\mathcal{R}$ are topological
preradicals satisfying $\left(  \mathrm{I1}\right)  $ and $\left(
\mathrm{I2}\right)  $ (see \cite[Subsection 4.2]{ShT14}); the convolution
operation for preradicals is associative (see \cite[Lemma 4.10]{ShT14}). If
$\mathcal{P}$ and $\mathcal{R}$ are UTRs then so is $\mathcal{P}%
\ast\mathcal{R}$; if $\mathcal{P}$ and $\mathcal{R}$ are OTRs then so is
$\mathcal{P}\circ\mathcal{R}$ (see \cite[Corollary 4.11]{ShT14}).

We underline that one may define the convolution $\mathcal{P}\ast\mathcal{R}$
as above if $\mathcal{P}$ is an ideal map and $\mathcal{R}$ is a closed ideal map.

\begin{lemma}
\label{ifp} If $\mathcal{P}$ is a preradical\emph{,} $\mathcal{R}$ and
$\mathcal{S}$ are closed ideal maps and $\mathcal{R}\leq$ $\mathcal{S},$ then
$\mathcal{P}\ast\mathcal{R}\leq\mathcal{P}\ast$ $\mathcal{S}$ and
$\mathtt{H}_{\left\{  \mathcal{P},\mathcal{R}\right\}  }\leq\mathcal{P}\ast$
$\mathcal{S}$.
\end{lemma}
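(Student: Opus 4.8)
The plan is to unwind the definitions and use only the elementary monotonicity properties of the preimage operation under quotient maps together with axiom $(\mathrm{H1})$. First I would fix a Banach algebra $A$ and write $I=\mathcal{R}(A)$ and $I'=\mathcal{S}(A)$, so that $I\subseteq I'$ by hypothesis $\mathcal{R}\le\mathcal{S}$. Let $q_I:A\to A/I$ and $q_{I'}:A\to A/I'$ be the quotient maps, and let $\pi:A/I\to A/I'$ be the canonical (continuous, surjective) homomorphism with $\pi\circ q_I=q_{I'}$. By definition $\mathcal{P}\ast\mathcal{R}(A)=q_I^{-1}(\mathcal{P}(A/I))$ and $\mathcal{P}\ast\mathcal{S}(A)=q_{I'}^{-1}(\mathcal{P}(A/I'))$. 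Applying $(\mathrm{H1})$ to the surjection $\pi$ gives $\pi(\mathcal{P}(A/I))\subseteq\mathcal{P}(A/I')$, hence $\mathcal{P}(A/I)\subseteq\pi^{-1}(\mathcal{P}(A/I'))$. Taking preimages under $q_I$ and using $q_I^{-1}\circ\pi^{-1}=(\pi\circ q_I)^{-1}=q_{I'}^{-1}$ yields
\[
\mathcal{P}\ast\mathcal{R}(A)=q_I^{-1}(\mathcal{P}(A/I))\subseteq q_I^{-1}\bigl(\pi^{-1}(\mathcal{P}(A/I'))\bigr)=q_{I'}^{-1}(\mathcal{P}(A/I'))=\mathcal{P}\ast\mathcal{S}(A),
\]
which is the first assertion.

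For the second inequality I would argue that $\mathtt{H}_{\{\mathcal{P},\mathcal{R}\}}(A)=\overline{\mathcal{P}(A)+\mathcal{R}(A)}$, so it suffices to show that each of $\mathcal{P}(A)$ and $\mathcal{R}(A)$ is contained in the closed ideal $\mathcal{P}\ast\mathcal{S}(A)$. That $\mathcal{R}(A)=I\subseteq I'\subseteq\mathcal{P}\ast\mathcal{S}(A)$ is immediate since always $I'\subseteq q_{I'}^{-1}(\mathcal{P}(A/I'))$. For the other summand, I would note $\mathcal{P}(A)=\mathcal{P}\ast(0)(A)$ (taking the closed ideal $(0)$ in place of $\mathcal{R}$), and since $(0)\le\mathcal{S}$ trivially, the first part of the lemma — applied with the closed ideal maps $(0)$ and $\mathcal{S}$ — gives $\mathcal{P}(A)=\mathcal{P}\ast(0)(A)\subseteq\mathcal{P}\ast\mathcal{S}(A)$. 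Because $\mathcal{P}\ast\mathcal{S}(A)$ is a closed ideal containing both summands, it contains their closed sum $\mathtt{H}_{\{\mathcal{P},\mathcal{R}\}}(A)$.

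The only genuinely delicate point is verifying the set-theoretic identity $q_I^{-1}\circ\pi^{-1}=q_{I'}^{-1}$ and that $\pi$ is a well-defined continuous surjective homomorphism to which $(\mathrm{H1})$ applies; both are routine once one observes $I\subseteq I'$ so that $\pi(a+I):=a+I'$ is well defined. Everything else is formal manipulation of preimages, so I expect no real obstacle — the value of the lemma is purely as a bookkeeping device for the convolution constructions in the next subsection.
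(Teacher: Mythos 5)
Your proof is correct and follows essentially the same route as the paper: both rely on applying axiom $(\mathrm{H1})$ to the canonical surjection between quotients and then unwinding preimages. The only cosmetic difference is that where the paper shows $\mathcal{P}(A)\subseteq q_I^{-1}(\mathcal{P}(A/I))$ directly from $q_I(\mathcal{P}(A))\subseteq\mathcal{P}(A/I)$, you obtain it by specializing the first part to the zero ideal map via $\mathcal{P}=\mathcal{P}\ast(0)$ --- a neat formal reformulation of the same $(\mathrm{H1})$ application.
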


\begin{proof}
Let $A$ be a Banach algebra, $J=\mathcal{R}\left(  A\right)  $ and
$I=\mathcal{S}\left(  A\right)  $. Let $q_{J}^{{}}:$ $A\longrightarrow A/J$,
$q_{I}^{{}}:$ $A\longrightarrow A/I$ and $q:$ $A/J\longrightarrow A/I$ be the
standard quotient maps. Then $q\circ q_{J}^{{}}=q_{I}^{{}}$ and $q\left(
\mathcal{P}\left(  A/J\right)  \right)  \subseteq\left(  \mathcal{P}\left(
A/I\right)  \right)  $. Therefore
\[
\mathcal{P}\ast\mathcal{R}\left(  A\right)  =q_{J}^{-1}\left(  \mathcal{P}%
\left(  A/J\right)  \right)  \subseteq q_{J}^{-1}q^{-1}q\left(  \mathcal{P}%
\left(  A/J\right)  \right)  \subseteq q_{I}^{-1}\left(  \mathcal{P}\left(
A/I\right)  \right)  =\mathcal{P}\ast\mathcal{S}\left(  A\right)  .
\]
Hence $\mathcal{P}\ast\mathcal{R}\leq\mathcal{P}\ast$ $\mathcal{S}$.

Further, $\mathcal{R}\left(  A\right)  =J\subseteq I$ and $q_{I}^{{}}\left(
\mathcal{P}\left(  A\right)  \right)  \subseteq\mathcal{P}\left(  A/I\right)
$ whence $\mathcal{P}\left(  A\right)  \subseteq q_{I}^{-1}\left(
\mathcal{P}\left(  A/I\right)  \right)  $. Hence%
\[
\mathtt{H}_{\left\{  \mathcal{P},\mathcal{R}\right\}  }\left(  A\right)
=\overline{\mathcal{P}\left(  A\right)  +\mathcal{R}\left(  A\right)
}\subseteq q_{I}^{-1}\left(  \mathcal{P}\left(  A/I\right)  \right)
=\mathcal{P}\ast\mathcal{S}\left(  A\right)  ,
\]
i.e., $\mathtt{H}_{\left\{  \mathcal{P},\mathcal{R}\right\}  }\leq
\mathcal{P}\ast\mathcal{S}$.
\end{proof}

The implication $\mathcal{P}\leq\mathcal{S}\Longrightarrow\mathcal{P}%
\ast\mathcal{R}\leq\mathcal{S}\ast\mathcal{R}$ is obvious.

\begin{lemma}
\label{ip} If $\mathcal{P}$ and $\mathcal{R}$ are UTRs then the radical
$\mathcal{P}\vee\mathcal{R}$ is equal to $\left(  \mathcal{P}\ast
\mathcal{R}\right)  ^{\ast}$\emph{;} if $\mathcal{P}$ and $\mathcal{R}$ are
OTRs then the radical $\mathcal{P}\wedge\mathcal{R}$ is equal to $\left(
\mathcal{P}\circ\mathcal{R}\right)  ^{\circ}$.
\end{lemma}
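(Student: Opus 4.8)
The plan is to prove both equalities by the same device. Write $\mathtt{H}=\mathtt{H}_{\{\mathcal{P},\mathcal{R}\}}$, so that $\mathtt{H}(A)=\overline{\mathcal{P}(A)+\mathcal{R}(A)}$ and, by (\ref{fh}), $\mathcal{P}\vee\mathcal{R}=\mathtt{H}^{\ast}$; and write $\mathtt{B}=\mathtt{B}_{\{\mathcal{P},\mathcal{R}\}}$, so that $\mathtt{B}(A)=\mathcal{P}(A)\cap\mathcal{R}(A)$ and $\mathcal{P}\wedge\mathcal{R}=\mathtt{B}^{\circ}$. I will sandwich $\mathcal{P}\ast\mathcal{R}$ between $\mathtt{H}$ and $\mathtt{H}^{\ast}$ (resp.\ $\mathcal{P}\circ\mathcal{R}$ between $\mathtt{B}^{\circ}$ and $\mathtt{B}$) and then let the procedure $(\,\cdot\,)^{\ast}$ (resp.\ $(\,\cdot\,)^{\circ}$) collapse the sandwich. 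The structural facts I rely on are: for any UTR $\mathcal{W}$, the map $\mathcal{W}^{\ast}$ is the \emph{smallest} topological radical with $\mathcal{W}\leq\mathcal{W}^{\ast}$ (this is the instance $\mathcal{F}=\{\mathcal{W}\}$ of the construction of $\vee\mathcal{F}$, since then $\mathtt{H}_{\mathcal{F}}=\mathcal{W}$) and it satisfies $(\mathrm{H2})$; dually, for any OTR $\mathcal{W}$, the map $\mathcal{W}^{\circ}$ is the \emph{largest} topological radical with $\mathcal{W}^{\circ}\leq\mathcal{W}$ and it satisfies $(\mathrm{I1})$.

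For the first assertion, $\mathcal{P}$ and $\mathcal{R}$ being UTRs, so are $\mathtt{H}$ and $\mathcal{P}\ast\mathcal{R}$; hence $\mathtt{H}^{\ast}$ and $(\mathcal{P}\ast\mathcal{R})^{\ast}$ are topological radicals, and it is enough to prove
\[
\mathtt{H}\ \leq\ \mathcal{P}\ast\mathcal{R}\ \leq\ \mathtt{H}^{\ast}.
\]
Indeed, the first inequality exhibits $(\mathcal{P}\ast\mathcal{R})^{\ast}$ as a topological radical above $\mathtt{H}$, so $\mathtt{H}^{\ast}\leq(\mathcal{P}\ast\mathcal{R})^{\ast}$; the second exhibits $\mathtt{H}^{\ast}$ as a topological radical above $\mathcal{P}\ast\mathcal{R}$, so $(\mathcal{P}\ast\mathcal{R})^{\ast}\leq\mathtt{H}^{\ast}$; together this gives $(\mathcal{P}\ast\mathcal{R})^{\ast}=\mathtt{H}^{\ast}=\mathcal{P}\vee\mathcal{R}$. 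The left inequality is Lemma~\ref{ifp} with $\mathcal{S}=\mathcal{R}$. For the right one, from $\mathcal{R}\leq\mathtt{H}\leq\mathtt{H}^{\ast}$ and $\mathcal{P}\leq\mathtt{H}\leq\mathtt{H}^{\ast}$, Lemma~\ref{ifp} (monotonicity in the second slot of $\ast$) together with the obvious monotonicity in the first slot gives
\[
\mathcal{P}\ast\mathcal{R}\ \leq\ \mathcal{P}\ast\mathtt{H}^{\ast}\ \leq\ \mathtt{H}^{\ast}\ast\mathtt{H}^{\ast},
\]
while $\mathtt{H}^{\ast}\ast\mathtt{H}^{\ast}(A)=q_{\mathtt{H}^{\ast}(A)}^{-1}\bigl(\mathtt{H}^{\ast}(A/\mathtt{H}^{\ast}(A))\bigr)=q_{\mathtt{H}^{\ast}(A)}^{-1}(0)=\mathtt{H}^{\ast}(A)$ by $(\mathrm{H2})$ for $\mathtt{H}^{\ast}$, i.e.\ $\mathtt{H}^{\ast}\ast\mathtt{H}^{\ast}=\mathtt{H}^{\ast}$.

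For the second assertion, $\mathcal{P}$ and $\mathcal{R}$ being OTRs, so are $\mathtt{B}$ and $\mathcal{P}\circ\mathcal{R}$; hence $\mathtt{B}^{\circ}$ and $(\mathcal{P}\circ\mathcal{R})^{\circ}$ are topological radicals, and dually it is enough to prove $\mathtt{B}^{\circ}\leq\mathcal{P}\circ\mathcal{R}\leq\mathtt{B}$. The upper estimate is immediate from $(\mathrm{I2})$: $\mathcal{P}\circ\mathcal{R}(A)=\mathcal{P}(\mathcal{R}(A))\subseteq\mathcal{R}(A)$ and $\mathcal{P}(\mathcal{R}(A))\subseteq\mathcal{P}(A)$, so $\mathcal{P}\circ\mathcal{R}(A)\subseteq\mathcal{P}(A)\cap\mathcal{R}(A)=\mathtt{B}(A)$. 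For the lower estimate, fix $A$ and put $K=\mathtt{B}^{\circ}(A)$; since $\mathtt{B}^{\circ}\leq\mathtt{B}$ we have $K\subseteq\mathtt{B}(A)\subseteq\mathcal{R}(A)$, so $K$ is an ideal of $\mathcal{R}(A)$, and $K=\mathtt{B}^{\circ}(K)$ by $(\mathrm{I1})$. Hence
\[
K=\mathtt{B}^{\circ}(K)\ \subseteq\ \mathtt{B}(K)=\mathcal{P}(K)\cap\mathcal{R}(K)\ \subseteq\ \mathcal{P}(K)\ \subseteq\ \mathcal{P}(\mathcal{R}(A))=\mathcal{P}\circ\mathcal{R}(A),
\]
the last inclusion being $(\mathrm{I2})$ for $\mathcal{P}$ relative to the ideal $K$ of $\mathcal{R}(A)$. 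As in the first case, $\mathtt{B}^{\circ}\leq\mathcal{P}\circ\mathcal{R}\leq\mathtt{B}$ forces $(\mathcal{P}\circ\mathcal{R})^{\circ}=\mathtt{B}^{\circ}=\mathcal{P}\wedge\mathcal{R}$.

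All the computations are routine inclusion-chasing; the step requiring care is the coupling of the universal characterisations of $\mathcal{W}^{\ast}$ and $\mathcal{W}^{\circ}$ with the idempotence identities $\mathtt{H}^{\ast}\ast\mathtt{H}^{\ast}=\mathtt{H}^{\ast}$ and $\mathtt{B}^{\circ}(K)=K$. These rest, respectively, on $(\mathrm{H2})$ for $\mathtt{H}^{\ast}$ and on $(\mathrm{I1})$ for $\mathtt{B}^{\circ}$, and they are exactly what turns the two-sided estimates into the claimed equalities; were either of those properties of the outputs of the procedures unavailable, the argument would not close. I do not anticipate any substantive obstacle beyond organising this bookkeeping correctly.
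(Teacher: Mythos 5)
Your proof is correct and follows essentially the same sandwiching strategy as the paper, but you execute the two halves slightly differently in a way worth noting. The paper shows $\mathtt{H}\leq\mathcal{P}\ast\mathcal{R}\leq\mathtt{H}\ast\mathtt{H}$ and then applies the $(\cdot)^{\ast}$ procedure to all three, invoking monotonicity of $(\cdot)^{\ast}$ together with the iteration identity $(\mathtt{H}\ast\mathtt{H})^{\ast}=(\mathtt{H}_{(2)^{\ast}})^{\ast}=\mathtt{H}^{\ast}$; you instead push the upper bound all the way to $\mathtt{H}^{\ast}$ (using $\mathtt{H}^{\ast}\ast\mathtt{H}^{\ast}=\mathtt{H}^{\ast}$, which follows from $(\mathrm{H2})$ for $\mathtt{H}^{\ast}$) and then close via the universal characterisation of $(\cdot)^{\ast}$ as the smallest topological radical above a UTR. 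Dually, for the OTR case the paper uses $\mathtt{B}\circ\mathtt{B}\leq\mathcal{P}\circ\mathtt{B}\leq\mathcal{P}\circ\mathcal{R}$ and $(\mathtt{B}\circ\mathtt{B})^{\circ}=\mathtt{B}^{\circ}$, whereas you prove $\mathtt{B}^{\circ}\leq\mathcal{P}\circ\mathcal{R}$ directly, using $(\mathrm{I1})$ to get $\mathtt{B}^{\circ}(K)=K$ for $K=\mathtt{B}^{\circ}(A)$ and then $(\mathrm{I2})$ for $\mathcal{P}$ on the ideal $K$ of $\mathcal{R}(A)$. Both routes are sound; yours leans more systematically on the universal properties of $(\cdot)^{\ast}$ and $(\cdot)^{\circ}$ and hence reads as somewhat more conceptual, while the paper's is a more computational manipulation of the iteration steps. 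Nothing is missing, and your bookkeeping with $(\mathrm{I1})$, $(\mathrm{I2})$, $(\mathrm{H2})$ is accurate.
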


\begin{proof}
Let $\mathcal{P}$ and $\mathcal{R}$ be UTRs. By Lemma \ref{ifp},
$\mathrm{H}_{\left\{  \mathcal{P},\mathcal{R}\right\}  }\leq\mathcal{P}%
\ast\mathcal{R}\leq$ $\left(  \mathcal{P}\ast\mathcal{R}\right)  ^{\ast}$
whence
\[
\mathcal{P}\vee\mathcal{R}=\left(  \mathtt{H}_{\left\{  \mathcal{P}%
,\mathcal{R}\right\}  }\right)  ^{\ast}\leq\left(  \mathcal{P}\ast
\mathcal{R}\right)  ^{\ast\ast}=\left(  \mathcal{P}\ast\mathcal{R}\right)
^{\ast}.
\]
On the other hand, $\mathcal{P}\ast\mathcal{R}\leq\mathtt{H}_{\left\{
\mathcal{P},\mathcal{R}\right\}  }\ast\mathcal{R}\leq\mathtt{H}_{\left\{
\mathcal{P},\mathcal{R}\right\}  }\ast\mathtt{H}_{\left\{  \mathcal{P}%
,\mathcal{R}\right\}  }$ by Lemma \ref{ifp}. Therefore
\[
\left(  \mathcal{P}\ast\mathcal{R}\right)  ^{\ast}\leq\left(  \mathtt{H}%
_{\left\{  \mathcal{P},\mathcal{R}\right\}  }\ast\mathtt{H}_{\left\{
\mathcal{P},\mathcal{R}\right\}  }\right)  ^{\ast}=\left(  \left(
\mathtt{H}_{\left\{  \mathcal{P},\mathcal{R}\right\}  }\right)  _{\left(
2\right)  ^{\ast}}\right)  ^{\ast}=\left(  \mathtt{H}_{\left\{  \mathcal{P}%
,\mathcal{R}\right\}  }\right)  ^{\ast}=\mathcal{P}\vee\mathcal{R}\text{.}%
\]

Let $\mathcal{P}$ and $\mathcal{R}$ be OTRs. Then $\left(  \mathcal{P}%
\circ\mathcal{R}\right)  ^{\circ}\leq\mathcal{P}\circ\mathcal{R}\leq
\mathtt{B}_{\left\{  \mathcal{P},\mathcal{R}\right\}  }$ whence
\[
\left(  \mathcal{P}\circ\mathcal{R}\right)  ^{\circ}\leq\left(  \mathtt{B}%
_{\left\{  \mathcal{P},\mathcal{R}\right\}  }\right)  ^{\circ}=\mathcal{P}%
\wedge\mathcal{R}\text{.}%
\]
On the other hand, $\mathtt{B}_{\left\{  \mathcal{P},\mathcal{R}\right\}
}\circ\mathtt{B}_{\left\{  \mathcal{P},\mathcal{R}\right\}  }\leq
\mathcal{P}\circ\mathtt{B}_{\left\{  \mathcal{P},\mathcal{R}\right\}  }%
\leq\mathcal{P}\circ\mathcal{R}$. Therefore%
\[
\mathcal{P}\wedge\mathcal{R}=\left(  \mathtt{B}_{\left\{  \mathcal{P}%
,\mathcal{R}\right\}  }\right)  ^{\circ}=\left(  \mathtt{B}_{\left\{
\mathcal{P},\mathcal{R}\right\}  }\circ\mathtt{B}_{\left\{  \mathcal{P}%
,\mathcal{R}\right\}  }\right)  ^{\circ}\leq\left(  \mathcal{P}\circ
\mathcal{R}\right)  ^{\circ}.
\]

\end{proof}

\subsection{Scattered $BW$-radical}

Here we will show that the restriction of $\mathcal{R}_{\mathrm{bw}}$ to the
class of scattered algebras is closely related to radicals of somewhat less
mysterious nature. Namely it coincides with the topological radical
$\mathcal{R}_{\mathrm{hc}}\vee\mathcal{R}_{\mathrm{cq}}$ constructed earlier.

\begin{theorem}
\label{scbw} Let $A$ be a scattered Banach algebra. Then $\mathcal{R}%
_{\mathrm{bw}}\left(  A\right)  =\left(  \mathcal{R}_{\mathrm{hc}}%
\vee\mathcal{R}_{\mathrm{cq}}\right)  \left(  A\right)  $.
\end{theorem}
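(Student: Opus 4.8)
The goal is to prove that on a scattered Banach algebra $A$ the largest $BW$-radical $\mathcal{R}_{\mathrm{bw}}$ agrees with the constructed radical $\mathcal{R}_{\mathrm{hc}}\vee\mathcal{R}_{\mathrm{cq}}$. One inclusion is essentially free: since both $\mathcal{R}_{\mathrm{hc}}$ and $\mathcal{R}_{\mathrm{cq}}$ are $BW$-radicals (by Theorems \ref{hcf} and \ref{iness}), they are each $\leq\mathcal{R}_{\mathrm{bw}}$ by Theorem \ref{bw}; and $\mathcal{R}_{\mathrm{bw}}$ is itself a topological radical, so it dominates the smallest topological radical above both, i.e. $\mathcal{R}_{\mathrm{hc}}\vee\mathcal{R}_{\mathrm{cq}}\leq\mathcal{R}_{\mathrm{bw}}$. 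This holds for every Banach algebra, scattered or not; the scatteredness hypothesis must therefore be used only for the reverse inclusion $\mathcal{R}_{\mathrm{bw}}(A)\subseteq(\mathcal{R}_{\mathrm{hc}}\vee\mathcal{R}_{\mathrm{cq}})(A)$.

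For the reverse inclusion the natural strategy is a transfinite reduction argument. Write $\mathcal{Q}=\mathcal{R}_{\mathrm{hc}}\vee\mathcal{R}_{\mathrm{cq}}$. By Lemma \ref{ip}, $\mathcal{Q}=(\mathcal{R}_{\mathrm{hc}}\ast\mathcal{R}_{\mathrm{cq}})^{\ast}$, so $\mathcal{Q}(A)$ is obtained by iterating the convolution step: pass to $A/\mathcal{Q}(A)$ and observe that this quotient has no nonzero hypocompact ideal and no nonzero compactly quasinilpotent ideal. I would set $B=A/\mathcal{Q}(A)$ and aim to show $\mathcal{R}_{\mathrm{bw}}(B)=(0)$; since $\mathcal{R}_{\mathrm{bw}}$ is hereditary-like enough to respect quotients (it is a topological radical, so $q(\mathcal{R}_{\mathrm{bw}}(A))\subseteq\mathcal{R}_{\mathrm{bw}}(B)$), and more importantly by axiom (H2)-type behaviour of $\mathcal{R}_{\mathrm{bw}}$, vanishing of $\mathcal{R}_{\mathrm{bw}}(B)$ forces $\mathcal{R}_{\mathrm{bw}}(A)\subseteq\mathcal{Q}(A)$. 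Now $B$ is still scattered (quotients of scattered algebras are scattered, since spectra only shrink), $\mathcal{R}_{\mathrm{hc}}(B)=(0)$ and $\mathcal{R}_{\mathrm{cq}}(B)=(0)$. The heart of the matter is: \emph{a scattered Banach algebra with no nonzero hypocompact ideal and no nonzero compactly quasinilpotent ideal has no nonzero $BW$-ideal.}

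To establish that, suppose $J\neq(0)$ is a $BW$-ideal of $B$. Being a $BW$-ideal, $J$ is a Berger-Wang algebra, so $\rho(M)=r(M)$ for every precompact $M\subseteq J$; in particular $\mathrm{Rad}(J)$ is compactly quasinilpotent, as noted after \eqref{bwa}. Since $\mathrm{Rad}(J)\subseteq\mathcal{R}_{\mathrm{cq}}(J)\subseteq\mathcal{R}_{\mathrm{cq}}(B)=(0)$ (using $\mathcal{R}_{\mathrm{cq}}$ hereditary), $J$ is semisimple. A nonzero semisimple scattered Banach algebra has a nonzero element with an isolated nonzero point of spectrum, hence a nonzero idempotent $p$ whose corner $pJp$ is again semisimple and scattered with $p$ as identity; pushing further one wants to produce inside $J$ a nonzero compact element, contradicting $\mathcal{R}_{\mathrm{hc}}(B)=(0)$. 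This is exactly where the scatteredness and the structure theory for scattered Banach algebras (Theorem 8.15 of \cite{ShT14} and surrounding results) must be invoked: in a scattered algebra the minimal nonzero "building blocks" carry enough finite-dimensionality or compactness that a nonzero semisimple scattered $BW$-ideal cannot avoid containing a nonzero hypocompact, indeed bicompact, ideal. I would extract from \cite{ShT14} the fact that every nonzero scattered Banach algebra has a nonzero element $a$ such that $\mathrm{L}_a\mathrm{R}_a$ has a nonzero isolated spectral point, and combine it with the $BW$-equality on $J$ to produce a nonzero bicompact ideal.

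The main obstacle I anticipate is precisely this last structural step — showing that a nonzero semisimple scattered $BW$-ideal must contain a nonzero hypocompact (equivalently bicompact) ideal. The $BW$-property only gives the spectral-radius identity $\rho=r$ on precompact sets; turning that into the existence of a genuine compact element requires leveraging the scattered hypothesis in an essential way, presumably via the theory in \cite{ShT14} that characterizes scattered algebras through transfinite chains with bicompact-or-radical-plus-scattered quotients, or via a Lomonosov-type argument on the corner $pJp$. Once that step is in hand, the contradiction is immediate and the transfinite-induction wrapper (stabilizing the convolution chain, using Proposition \ref{ESBW} for the $BW$-side) is routine; I would close the argument by the usual "if every nonzero quotient has a nonzero $\mathcal{Q}$-ideal then the $\mathcal{R}_{\mathrm{bw}}$-radical is absorbed" reasoning, i.e. the convolution-procedure machinery of \cite{D97} quoted before Lemma \ref{ifp}.
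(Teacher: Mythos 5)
Your setup follows the paper's own strategy exactly: the easy inclusion $\mathcal{R}_{\mathrm{hc}}\vee\mathcal{R}_{\mathrm{cq}}\leq\mathcal{R}_{\mathrm{bw}}$, reduction to the quotient $B=A/(\mathcal{R}_{\mathrm{hc}}\vee\mathcal{R}_{\mathrm{cq}})(A)$, and showing that a hypothetical nonzero $BW$-ideal $J$ of $B$ is semisimple (via $\rho=r$ on $J$, so $\mathrm{Rad}(J)$ is compactly quasinilpotent, hence lands in $\mathcal{R}_{\mathrm{cq}}(B)=(0)$ by heredity of $\mathcal{R}_{\mathrm{cq}}$). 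That part is correct and mirrors the paper.

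The gap is precisely where you flag it, and the tool you reach for is not the right one. Once you know $J$ is a nonzero \emph{semisimple} scattered Banach algebra, what closes the argument is \textbf{Barnes' theorem} \cite{Ba68} (see also \cite[Theorem 5.7.8]{Au91}): a semisimple Banach algebra whose elements have countable spectra has nonzero socle. The socle is generated by minimal (hence finite-rank) idempotents, so it is a hypofinite (a fortiori hypocompact) ideal of $J$; by heredity of $\mathcal{R}_{\mathrm{hc}}$ this forces $(0)\neq\mathcal{R}_{\mathrm{hc}}(J)=J\cap\mathcal{R}_{\mathrm{hc}}(B)$, contradicting $\mathcal{R}_{\mathrm{hc}}(B)=(0)$. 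Note that the $BW$-property of $J$ is not needed at this final step at all — it was only used earlier to get semisimplicity — and neither \cite[Theorem 8.15]{ShT14} nor a Lomonosov-type argument on a corner $pJp$ enters: the characterization you propose to extract (a nonzero element $a$ with an isolated nonzero spectral point of $\mathrm{L}_a\mathrm{R}_a$) neither appears in the proof nor would it directly yield a compact element without additional work. Replace that speculative step with the socle/Barnes argument and your proof becomes the paper's proof.
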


\begin{proof}
Clearly,\textbf{ } $\mathcal{R}_{\mathrm{hc}}\vee\mathcal{R}_{\mathrm{cq}}%
\leq\mathcal{R}_{\mathrm{bw}}$. Let $I=\left(  \mathcal{R}_{\mathrm{hc}}%
\vee\mathcal{R}_{\mathrm{cq}}\right)  \left(  A\right)  $, $J=\mathcal{R}%
_{\mathrm{bw}}\left(  A\right)  $, $B=A/I$ and $K=J/I$. Then $B$ is a
scattered, $\mathcal{R}_{\mathrm{hc}}\vee\mathcal{R}_{\mathrm{cq}}$-semisimple
algebra and $K\subseteq\mathcal{R}_{\mathrm{bw}}\left(  B\right)  $ is a
closed ideal of $B$. Assume to the contrary that $K\neq\left(  0\right)  $.

As $K$ is a BW-ideal, it is a Berger-Wang algebra. So
\[
\mathrm{Rad}\left(  K\right)  =\mathcal{R}_{\mathrm{cq}}(K)=K\cap
\mathcal{R}_{\mathrm{cq}}(B)\text{ }(\text{we used heredity of }%
\mathcal{R}_{\mathrm{cq}}).
\]
But $\mathcal{R}_{\mathrm{cq}}(B)$ $\subseteq\left(  \mathcal{R}_{\mathrm{hc}%
}\vee\mathcal{R}_{\mathrm{cq}}\right)  \left(  B\right)  =\left(  0\right)  $.
Therefore $\mathrm{Rad}(K)=\left(  0\right)  $, whence $K$ is a semisimple algebra.

Since $B$ is scattered, $K$ is also scattered. By Barnes' Theorem \cite{Ba68}
(see also \cite[Theorem 5.7.8]{Au91} with another proof) $K$ has a non-zero
socle. Since the socle is generated by finite-rank projections, it is a
hypocompact (even hypofinite) ideal and, therefore, is contained in
$\mathcal{R}_{\mathrm{hc}}(K)$. Since $\mathcal{R}_{\mathrm{hc}}$ is a
hereditary radical then
\[
\left(  0\right)  \neq\mathcal{R}_{\mathrm{hc}}\left(  K\right)
=K\cap\mathcal{R}_{\mathrm{hc}}\left(  B\right)  .
\]
But $\mathcal{R}_{\mathrm{hc}}\left(  B\right)  =\left(  0\right)  $, a
contradiction. Hence $K=\left(  0\right)  $, i.e., $J=I$.
\end{proof}

\begin{theorem}
\label{herh} The radical $\mathcal{R}_{\mathrm{hc}}\vee\mathcal{R}%
_{\mathrm{cq}}$ is hereditary.
\end{theorem}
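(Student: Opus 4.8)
The plan is to exploit the description of the join obtained in Lemma \ref{ip}: since $\mathcal{R}_{\mathrm{hc}}$ and $\mathcal{R}_{\mathrm{cq}}$ are hereditary topological radicals (in particular UTRs), we have $\mathcal{R}_{\mathrm{hc}}\vee\mathcal{R}_{\mathrm{cq}}=(\mathcal{R}_{\mathrm{hc}}\ast\mathcal{R}_{\mathrm{cq}})^{\ast}$. Thus it suffices to prove two things: first, that the convolution $\mathcal{R}_{\mathrm{hc}}\ast\mathcal{R}_{\mathrm{cq}}$ of two hereditary topological radicals is again hereditary (i.e. satisfies $(\mathrm{H3})$ — it already satisfies $(\mathrm{H1})$ by being a topological preradical, and one checks $(\mathrm{H2})$ is not needed for the claim); and second, that the convolution procedure $\mathcal{P}\longmapsto\mathcal{P}^{\ast}$ preserves heredity. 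Combining these, $\mathcal{R}_{\mathrm{hc}}\vee\mathcal{R}_{\mathrm{cq}}$ is hereditary. The whole argument reduces to a transfinite-induction bookkeeping on the chain $I_0=(0)$, $I_{\alpha+1}=q_{I_\alpha}^{-1}(\mathcal{R}(A/I_\alpha))$ defining $\mathcal{R}^{\ast}$, where $\mathcal{R}=\mathcal{R}_{\mathrm{hc}}\ast\mathcal{R}_{\mathrm{cq}}$.

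First I would establish the heredity of the convolution. Let $\mathcal{P},\mathcal{Q}$ be hereditary topological radicals, $A$ a Banach algebra, and $J$ a closed ideal of $A$; put $\mathcal{S}=\mathcal{P}\ast\mathcal{Q}$, so $\mathcal{S}(A)=q_{\mathcal{Q}(A)}^{-1}(\mathcal{P}(A/\mathcal{Q}(A)))$. We must show $\mathcal{S}(J)=J\cap\mathcal{S}(A)$. Write $Q=\mathcal{Q}(A)$. By heredity of $\mathcal{Q}$, $\mathcal{Q}(J)=J\cap Q$, so $J/\mathcal{Q}(J)=J/(J\cap Q)\cong (J+Q)/Q$, which is a closed ideal of $A/Q$. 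Now apply heredity of $\mathcal{P}$ inside $A/Q$ to the ideal $(J+Q)/Q$: we get $\mathcal{P}((J+Q)/Q)=\bigl((J+Q)/Q\bigr)\cap\mathcal{P}(A/Q)$. Pulling back through $A\to A/Q$, a diagram chase identifies $\mathcal{S}(J)$ — which equals the preimage in $J$ of $\mathcal{P}(J/\mathcal{Q}(J))$ — with $J\cap\mathcal{S}(A)$. The one point needing care is the identification of quotients and preimages under the three maps $A\to A/Q$, $J\to J/(J\cap Q)$, $(J+Q)/Q\hookrightarrow A/Q$; this is routine but must be written carefully since one is juggling an ideal of $A$, its image in a quotient, and the $\mathcal{P}$-radical of that image.

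Next I would handle the convolution procedure. Suppose $\mathcal{R}$ is a hereditary topological preradical satisfying $(\mathrm{I1})$ and $(\mathrm{I2})$ — by the previous step $\mathcal{R}=\mathcal{R}_{\mathrm{hc}}\ast\mathcal{R}_{\mathrm{cq}}$ qualifies. I claim $\mathcal{R}^{\ast}$ is hereditary. Fix $A$ and a closed ideal $J$, and run the two chains $(I_\alpha)$ for $A$ and $(I'_\alpha)$ for $J$ given by $I_0=I'_0=(0)$, $I_{\alpha+1}=q_{I_\alpha}^{-1}(\mathcal{R}(A/I_\alpha))$, $I'_{\alpha+1}=q_{I'_\alpha}^{-1}(\mathcal{R}(J/I'_\alpha))$, with unions at limit stages. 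By transfinite induction I would prove $I'_\alpha=J\cap I_\alpha$ for all $\alpha$: at successor steps, $J\cap I_{\alpha+1}$ is the preimage in $J$ of $(J/(J\cap I_\alpha))\cap\mathcal{R}(A/I_\alpha)$, which by heredity of $\mathcal{R}$ applied to the ideal $(J+I_\alpha)/I_\alpha\cong J/(J\cap I_\alpha)$ of $A/I_\alpha$ equals $\mathcal{R}(J/(J\cap I_\alpha))=\mathcal{R}(J/I'_\alpha)$, i.e. $I'_{\alpha+1}$; limit stages follow since intersection commutes with the relevant closures of increasing unions. Passing to the stabilization point gives $\mathcal{R}^{\ast}(J)=J\cap\mathcal{R}^{\ast}(A)$, i.e. $(\mathrm{H3})$ for $\mathcal{R}^{\ast}$. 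Since $\mathcal{R}^{\ast}$ also satisfies $(\mathrm{H1})$ and $(\mathrm{H2})$ (the latter by the cited \cite[Theorems 6.6 and 6.10]{D97}), and $(\mathrm{H3})$ implies $(\mathrm{I1}),(\mathrm{I2})$, it is a hereditary topological radical. Applying this with $\mathcal{R}=\mathcal{R}_{\mathrm{hc}}\ast\mathcal{R}_{\mathrm{cq}}$ and invoking $\mathcal{R}_{\mathrm{hc}}\vee\mathcal{R}_{\mathrm{cq}}=(\mathcal{R}_{\mathrm{hc}}\ast\mathcal{R}_{\mathrm{cq}})^{\ast}$ from Lemma \ref{ip} completes the proof.

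The main obstacle is the transfinite induction step: one has to be sure that heredity is stable along the convolution chain, and in particular that at a limit ordinal $\beta$ the identity $J\cap\overline{\bigcup_{\alpha<\beta}I_\alpha}=\overline{\bigcup_{\alpha<\beta}(J\cap I_\alpha)}$ holds. This is true because $J$ is closed and the union is increasing, so $J\cap\bigcup I_\alpha=\bigcup(J\cap I_\alpha)$ and intersecting a closed set with a closure behaves well here; but it is the place where one genuinely uses that $J$ is closed, and it is worth spelling out rather than waving through. Everything else is diagram-chasing with quotient maps.
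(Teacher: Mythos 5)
Your plan reduces heredity of $\mathcal{R}_{\mathrm{hc}}\vee\mathcal{R}_{\mathrm{cq}}$ to two general claims: that the convolution of hereditary topological radicals is hereditary, and that $\mathcal{R}\mapsto\mathcal{R}^{\ast}$ preserves heredity. The gap lies at the very first diagram chase, and it propagates through both claims. You identify $J/\mathcal{Q}(J)$ with $(J+Q)/Q$ (where $Q=\mathcal{Q}(A)$) and treat the latter as a \emph{closed} ideal of $A/Q$, so that $(\mathrm{H3})$ of $\mathcal{P}$ can be applied to it. But for closed ideals $J,Q$ of a Banach algebra $A$, the sum $J+Q$ need not be closed, so $(J+Q)/Q=q_{Q}^{{}}(J)$ need not be a closed ideal of $A/Q$; it is then not a Banach algebra and is not topologically isomorphic to the Banach algebra $J/(J\cap Q)$ (the algebraic bijection is a topological isomorphism exactly when the image is closed, by the open mapping theorem). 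Axiom $(\mathrm{H3})$ is only available for closed ideals, and $(\mathrm{H1})$ does not rescue the step because the inclusion $J/\mathcal{Q}(J)\rightarrow A/Q$ is not surjective. The same unjustified closedness assertion recurs at every successor step in your transfinite chain for $\mathcal{R}^{\ast}$, where you apply heredity to $(J+I_{\alpha})/I_{\alpha}$. Note also that the paper explicitly records that \emph{meets} of hereditary radicals are hereditary (and equal the pointwise intersection), while conspicuously making no parallel claim for joins --- a strong hint that no such soft general fact is available, and that Theorem \ref{herh} genuinely needs a specific argument.

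The paper's proof is of a different shape and carefully avoids taking images of ideals that might fail to be closed: it always passes to quotients by $\mathcal{P}(I)$ and then looks at quotient images of ideals \emph{containing} the kernel, which are automatically closed. Concretely, assuming $\mathcal{P}(I)\subsetneq I\cap\mathcal{P}(A)$ for $\mathcal{P}=\mathcal{R}_{\mathrm{hc}}\vee\mathcal{R}_{\mathrm{cq}}$, one sets $B=A/\mathcal{P}(I)$ and considers the non-zero closed ideal $L=(I\cap\mathcal{P}(A))/\mathcal{P}(I)$. On one hand $L$ is a closed ideal of the $\mathcal{P}$-semisimple algebra $I/\mathcal{P}(I)$, so heredity of $\mathcal{R}_{\mathrm{hc}}$ and $\mathcal{R}_{\mathrm{cq}}$ \emph{separately} gives $\mathcal{R}_{\mathrm{hc}}(L)=\mathcal{R}_{\mathrm{cq}}(L)=(0)$. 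On the other hand $L\subseteq\mathcal{P}(B)\subseteq\mathcal{R}_{\mathrm{sc}}(B)$, so $L$ is a scattered $BW$-ideal, hence a Berger--Wang algebra with $\mathrm{Rad}(L)\subseteq\mathcal{R}_{\mathrm{cq}}(L)=(0)$; therefore $L$ is a non-zero semisimple scattered Banach algebra, which by Barnes' theorem has non-zero socle and thus non-zero hypocompact radical --- a contradiction. This hinges on $\mathcal{P}$ being a $BW$-radical $\leq\mathcal{R}_{\mathrm{sc}}$, on the equality $\mathrm{Rad}=\mathcal{R}_{\mathrm{cq}}$ for Berger--Wang algebras, and on Barnes' theorem, none of which your plan uses. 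To salvage your approach you would at minimum have to justify closedness of the images $q_{Q}^{{}}(J)$ that appear, and I do not see how to do that in general.
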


\begin{proof}
Set $\mathcal{P}=\mathcal{R}_{\mathrm{hc}}\vee\mathcal{R}_{\mathrm{cq}}$. Let
$A$ be a Banach algebra and $I$ its closed ideal. If $\mathcal{P}\left(
I\right)  \neq I\cap\mathcal{P}\left(  A\right)  $, let $B=A/\mathcal{P}%
\left(  I\right)  $, $J=I/\mathcal{P}\left(  I\right)  $ and $K=\mathcal{P}%
\left(  A\right)  /\mathcal{P}\left(  I\right)  $. Then $J$ is a $\mathcal{P}%
$-semisimple ideal of $B$. Therefore
\begin{equation}
\mathcal{R}_{\mathrm{hc}}\left(  J\right)  =\mathcal{R}_{\mathrm{cq}}\left(
J\right)  =\left(  0\right)  \label{semh}%
\end{equation}
and $K$ is an ideal of $B$ contained in $\mathcal{P}\left(  B\right)  $. Hence
$K\in BW\left(  B\right)  $. As $\mathcal{P}\left(  B\right)  \subseteq
\mathcal{R}_{\mathrm{sc}}\left(  B\right)  $ and $\mathcal{R}_{\mathrm{sc}}$
is hereditary, $K$ is a scattered algebra.

Let $L=J\cap K$. Then $L$ is a non-zero ideal of $B$. As $L$ is an ideal of
$K$,\textbf{ } $L\in BW\left(  B\right)  $ and $L$ is scattered. As $L$ is an
ideal of $J,$ it follows from (\ref{semh}) that
\begin{equation}
\mathcal{R}_{\mathrm{hc}}\left(  L\right)  =\mathcal{R}_{\mathrm{cq}}\left(
L\right)  =\left(  0\right)  . \label{semc}%
\end{equation}

As $L$ is a Berger-Wang algebra, it follows from (\ref{bwa}) that
$\operatorname*{Rad}\left(  L\right)  \subseteq\mathcal{R}_{\mathrm{cq}%
}\left(  L\right)  \mathrm{.}$ Therefore $L$ is a semisimple non-zero Banach
algebra by (\ref{semc}). By Barnes' Theorem \cite{Ba68}, $L$ has non-zero
socle $\mathrm{soc}\left(  L\right)  $, i.e., $\mathcal{R}_{\mathrm{hc}%
}\left(  L\right)  \neq\left(  0\right)  $, a contradiction.
\end{proof}

\begin{theorem}
\label{hc} $\mathcal{R}_{\mathrm{hc}}\vee\mathcal{R}_{\mathrm{cq}}%
=\mathcal{R}_{\mathrm{bw}}\circ\mathcal{R}_{\mathrm{sc}}=\mathcal{R}%
_{\mathrm{bw}}\wedge\mathcal{R}_{\mathrm{sc}}$.
\end{theorem}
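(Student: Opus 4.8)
The plan is to prove a chain of two equalities, $\mathcal{R}_{\mathrm{hc}}\vee\mathcal{R}_{\mathrm{cq}} = \mathcal{R}_{\mathrm{bw}}\circ\mathcal{R}_{\mathrm{sc}}$ and $\mathcal{R}_{\mathrm{bw}}\circ\mathcal{R}_{\mathrm{sc}} = \mathcal{R}_{\mathrm{bw}}\wedge\mathcal{R}_{\mathrm{sc}}$, using the machinery assembled in the preceding subsections. For the second equality, the key observation is that $\mathcal{R}_{\mathrm{bw}}$ and $\mathcal{R}_{\mathrm{sc}}$ should both be OTRs (indeed hereditary radicals are OTRs), so Lemma~\ref{ip} gives $\mathcal{R}_{\mathrm{bw}}\wedge\mathcal{R}_{\mathrm{sc}} = (\mathcal{R}_{\mathrm{bw}}\circ\mathcal{R}_{\mathrm{sc}})^{\circ}$. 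Thus it suffices to show that $\mathcal{R}_{\mathrm{bw}}\circ\mathcal{R}_{\mathrm{sc}}$ already satisfies (I1), i.e.\ equals its own superposition power, which will follow once we have identified it with the hereditary radical $\mathcal{R}_{\mathrm{hc}}\vee\mathcal{R}_{\mathrm{cq}}$ from the first equality (a hereditary radical is a genuine topological radical, hence a fixed point of $(\cdot)^{\circ}$).

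The heart of the matter is the first equality $\mathcal{R}_{\mathrm{hc}}\vee\mathcal{R}_{\mathrm{cq}} = \mathcal{R}_{\mathrm{bw}}\circ\mathcal{R}_{\mathrm{sc}}$. For the inclusion $\leq$: for any $A$, the ideal $\mathcal{R}_{\mathrm{sc}}(A)$ is a scattered algebra, and by Theorem~\ref{scbw} applied to the scattered algebra $\mathcal{R}_{\mathrm{sc}}(A)$ we get $\mathcal{R}_{\mathrm{bw}}(\mathcal{R}_{\mathrm{sc}}(A)) = (\mathcal{R}_{\mathrm{hc}}\vee\mathcal{R}_{\mathrm{cq}})(\mathcal{R}_{\mathrm{sc}}(A))$; since $\mathcal{R}_{\mathrm{hc}}\vee\mathcal{R}_{\mathrm{cq}}$ is hereditary by Theorem~\ref{herh} and $\mathcal{R}_{\mathrm{hc}}\vee\mathcal{R}_{\mathrm{cq}}\leq\mathcal{R}_{\mathrm{sc}}$ (because $\mathcal{R}_{\mathrm{hc}}\leq\mathcal{R}_{\mathrm{sc}}$, $\mathcal{R}_{\mathrm{cq}}\leq\mathrm{Rad}\leq\mathcal{R}_{\mathrm{sc}}$, and $\mathcal{R}_{\mathrm{sc}}$ is a radical), heredity gives $(\mathcal{R}_{\mathrm{hc}}\vee\mathcal{R}_{\mathrm{cq}})(\mathcal{R}_{\mathrm{sc}}(A)) = \mathcal{R}_{\mathrm{sc}}(A)\cap(\mathcal{R}_{\mathrm{hc}}\vee\mathcal{R}_{\mathrm{cq}})(A) = (\mathcal{R}_{\mathrm{hc}}\vee\mathcal{R}_{\mathrm{cq}})(A)$. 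Hence $(\mathcal{R}_{\mathrm{hc}}\vee\mathcal{R}_{\mathrm{cq}})(A)\subseteq\mathcal{R}_{\mathrm{bw}}(\mathcal{R}_{\mathrm{sc}}(A)) = (\mathcal{R}_{\mathrm{bw}}\circ\mathcal{R}_{\mathrm{sc}})(A)$. For the reverse inclusion $\geq$: the ideal $(\mathcal{R}_{\mathrm{bw}}\circ\mathcal{R}_{\mathrm{sc}})(A) = \mathcal{R}_{\mathrm{bw}}(\mathcal{R}_{\mathrm{sc}}(A))$ is a $BW$-ideal sitting inside the scattered algebra $\mathcal{R}_{\mathrm{sc}}(A)$; applying Theorem~\ref{scbw} to that scattered algebra shows this ideal equals $(\mathcal{R}_{\mathrm{hc}}\vee\mathcal{R}_{\mathrm{cq}})(\mathcal{R}_{\mathrm{sc}}(A))$, which by the heredity computation just performed is $(\mathcal{R}_{\mathrm{hc}}\vee\mathcal{R}_{\mathrm{cq}})(A)$. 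So in fact the two inclusions collapse to the single identity supplied by Theorem~\ref{scbw} together with Theorem~\ref{herh}.

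The main obstacle I anticipate is bookkeeping rather than conceptual: one must be careful that $\mathcal{R}_{\mathrm{bw}}$ really is an OTR (so that Lemma~\ref{ip} applies) and that $\mathcal{R}_{\mathrm{sc}}$ is one as well — these follow because hereditary topological radicals satisfy all the OTR axioms, but the paper should cite this cleanly. A second subtlety is verifying $\mathcal{R}_{\mathrm{hc}}\vee\mathcal{R}_{\mathrm{cq}}\leq\mathcal{R}_{\mathrm{sc}}$ in the strong sense needed for the heredity manipulation: since $\mathcal{R}_{\mathrm{sc}}$ is a radical and both $\mathcal{R}_{\mathrm{hc}}$ and $\mathcal{R}_{\mathrm{cq}}$ lie below it (the latter because $\mathcal{R}_{\mathrm{cq}}\leq\mathrm{Rad}\leq\mathcal{R}_{\mathrm{sc}}$ by Theorem~\ref{sc}), the smallest radical above both, namely $\mathcal{R}_{\mathrm{hc}}\vee\mathcal{R}_{\mathrm{cq}}$, is also $\leq\mathcal{R}_{\mathrm{sc}}$. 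Once these points are nailed down, the theorem is essentially a formal consequence of Theorems~\ref{scbw}, \ref{herh} and Lemma~\ref{ip}, with no further calculation required.
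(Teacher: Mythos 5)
Your argument is correct and follows essentially the same route as the paper: Theorem~\ref{scbw} applied to the scattered algebra $\mathcal{R}_{\mathrm{sc}}(A)$ plus heredity of $\mathcal{R}_{\mathrm{hc}}\vee\mathcal{R}_{\mathrm{cq}}$ (Theorem~\ref{herh}) give the first equality, and Lemma~\ref{ip} together with the fact that the result of the first equality is a topological radical (hence a fixed point of $(\cdot)^{\circ}$) gives the second. One small imprecision: your parenthetical justification ``indeed hereditary radicals are OTRs'' does not apply to $\mathcal{R}_{\mathrm{bw}}$, since the paper never establishes that $\mathcal{R}_{\mathrm{bw}}$ is hereditary; the correct (and simpler) reason is that $\mathcal{R}_{\mathrm{bw}}$ is a \emph{topological radical} by Theorem~\ref{bw}, and every topological radical automatically satisfies the OTR axioms. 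This is harmless, as is the slight difference in bookkeeping for the first equality: the paper chains inclusions through axioms (I1) and (I2) rather than invoking (H3) directly, but these are equivalent for a hereditary radical, so the two presentations coincide in substance.
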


\begin{proof}
Let $A$ be a Banach algebra and $I=\mathcal{R}_{\mathrm{sc}}\left(  A\right)
$. Then $I$ is a scattered algebra and $\mathcal{R}_{\mathrm{bw}}\left(
I\right)  =\left(  \mathcal{R}_{\mathrm{hc}}\vee\mathcal{R}_{\mathrm{cq}%
}\right)  \left(  I\right)  $ by Theorem \ref{scbw}. As $\mathcal{R}%
_{\mathrm{hc}}\vee\mathcal{R}_{\mathrm{cq}}\leq\mathcal{R}_{\mathrm{sc}}$ and
$\mathcal{R}_{\mathrm{hc}}\vee\mathcal{R}_{\mathrm{cq}}\leq$ $\mathcal{R}%
_{\mathrm{bw}}$ , we obtain that
\begin{align*}
\left(  \mathcal{R}_{\mathrm{hc}}\vee\mathcal{R}_{\mathrm{cq}}\right)  \left(
A\right)   &  =\left(  \mathcal{R}_{\mathrm{hc}}\vee\mathcal{R}_{\mathrm{cq}%
}\right)  \left(  \left(  \mathcal{R}_{\mathrm{hc}}\vee\mathcal{R}%
_{\mathrm{cq}}\right)  \left(  A\right)  \right) \\
&  \subseteq\left(  \mathcal{R}_{\mathrm{hc}}\vee\mathcal{R}_{\mathrm{cq}%
}\right)  \left(  \mathcal{R}_{\mathrm{sc}}\left(  A\right)  \right)
\subseteq\mathcal{R}_{\mathrm{bw}}\left(  \mathcal{R}_{\mathrm{sc}}\left(
A\right)  \right) \\
&  =\mathcal{R}_{\mathrm{bw}}\left(  I\right)  =\left(  \mathcal{R}%
_{\mathrm{hc}}\vee\mathcal{R}_{\mathrm{cq}}\right)  \left(  I\right) \\
&  \subseteq\left(  \mathcal{R}_{\mathrm{hc}}\vee\mathcal{R}_{\mathrm{cq}%
}\right)  \left(  A\right)  ,
\end{align*}
i.e., $\mathcal{R}_{\mathrm{hc}}\vee\mathcal{R}_{\mathrm{cq}}=\mathcal{R}%
_{\mathrm{bw}}\circ\mathcal{R}_{\mathrm{sc}}$.

It is clear that $\mathcal{R}_{\mathrm{bw}}\circ\mathcal{R}_{\mathrm{sc}}%
\leq\mathcal{R}_{\mathrm{bw}}$ and $\mathcal{R}_{\mathrm{bw}}\circ
\mathcal{R}_{\mathrm{sc}}\leq\mathcal{R}_{\mathrm{sc}}$. As $\mathcal{R}%
_{\mathrm{bw}}\circ\mathcal{R}_{\mathrm{sc}}$ is a topological radical, then
$\mathcal{R}_{\mathrm{bw}}\circ\mathcal{R}_{\mathrm{sc}}\leq\mathcal{R}%
_{\mathrm{bw}}\wedge\mathcal{R}_{\mathrm{sc}}$. By Lemma \ref{ip},
\[
\mathcal{R}_{\mathrm{bw}}\wedge\mathcal{R}_{\mathrm{sc}}=\left(
\mathcal{R}_{\mathrm{bw}}\circ\mathcal{R}_{\mathrm{sc}}\right)  ^{\circ}%
\leq\mathcal{R}_{\mathrm{bw}}\circ\mathcal{R}_{\mathrm{sc}}\leq\mathcal{R}%
_{\mathrm{bw}}\wedge\mathcal{R}_{\mathrm{sc}}.
\]
Therefore $\mathcal{R}_{\mathrm{bw}}\circ\mathcal{R}_{\mathrm{sc}}%
=\mathcal{R}_{\mathrm{bw}}\wedge\mathcal{R}_{\mathrm{sc}}$.
\end{proof}

Let us call $\mathcal{R}_{\mathrm{sbw}}:=\mathcal{R}_{\mathrm{sc}}%
\wedge\mathcal{R}_{\mathrm{bw}}$ the \textit{scattered }$BW$\textit{-radical}.

\subsection{ The centralization procedure}

Our next aim is to remove the frame of the class of scattered algebras by
adding commutative algebras and forming transfinite extensions. For this in
the theory of topological radicals there exists a special procedure.

Let ${\sum\nolimits_{a}}\left(  A\right)  $ be the sum of all commutative
ideals of $A$, and let ${\sum\nolimits_{\beta}}\left(  A\right)  $ be the sum
of all nilpotent ideals. The maps ${\sum\nolimits_{a}}$ and ${\sum
\nolimits_{\beta}}$ are preradicals on the class of Banach algebras.

Note that the ideals ${\sum\nolimits_{a}}\left(  A\right)  $ and
${\sum\nolimits_{\beta}}\left(  A\right)  $ can be non-closed.

If $A$ is semiprime then ${\sum\nolimits_{a}}\left(  A\right)  $ is the
largest central ideal of $A$ (see \cite[Lemma 5.1]{ShT14}). Let $\mathcal{P}$
be a closed ideal map on the class of Banach algebras. Define an ideal map
$\mathcal{P}^{a}$ by setting
\[
\mathcal{P}^{a}={\sum\nolimits_{a}}\ast\mathcal{P}.
\]
Let ${\sum\nolimits_{\beta}\leq}$ $\mathcal{P}.$ Then $\mathcal{P}^{a}\left(
A\right)  $ is the largest ideal of $A$ commutative modulo $\mathcal{P}\left(
A\right)  $, and if $\mathcal{P}$ is a topological radical then, by
\cite[Theorem 5.3]{ShT14}, $\mathcal{P}^{a}$ is a\textit{ }UTR.

\begin{proposition}
Let $\mathcal{F}$ be a family of topological radicals\emph{,} let
${\sum\nolimits_{\beta}}\leq\mathcal{P}\in\mathcal{F}$ and $\mathcal{G}%
=\mathcal{F}\backslash\left\{  \mathcal{P}\right\}  $. Then $\left(
\mathtt{H}_{\mathcal{F}}\right)  ^{a}\leq\mathcal{P}^{a}\ast\mathtt{H}%
_{\mathcal{G}}$ and $\left(  \mathtt{H}_{\mathcal{F}}\right)  ^{a\ast}=\left(
\mathcal{P}^{a}\ast\mathtt{H}_{\mathcal{G}}\right)  ^{\ast}$.
\end{proposition}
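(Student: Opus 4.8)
The plan is to derive both parts from purely formal manipulations of the operations $\ast$, $\mathtt{H}_{\{\cdot,\cdot\}}$, the convolution procedure $\mathcal{R}\mapsto\mathcal{R}^{\ast}$ and its second stage $\mathcal{R}\mapsto\mathcal{R}_{(2)^{\ast}}$, in close analogy with the proof of Lemma \ref{ip}. Two preliminary facts are used. First, since $\mathtt{H}_{\mathcal{G}}(A)=\overline{\sum_{\mathcal{R}\in\mathcal{G}}\mathcal{R}(A)}$ and $\mathcal{F}=\{\mathcal{P}\}\cup\mathcal{G}$, we have $\mathtt{H}_{\mathcal{F}}=\mathtt{H}_{\{\mathcal{P},\,\mathtt{H}_{\mathcal{G}}\}}$: an outer closure of a sum absorbs an inner closure. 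Second, $\sum\nolimits_{\beta}\leq\mathcal{P}\leq\mathtt{H}_{\mathcal{F}}$, whence $\mathcal{P}^{a}$ and $(\mathtt{H}_{\mathcal{F}})^{a}$ are UTRs and $\mathcal{P}^{a}\ast\mathtt{H}_{\mathcal{G}}$ is a UTR; consequently $(\mathtt{H}_{\mathcal{F}})^{a\ast}$ and $(\mathcal{P}^{a}\ast\mathtt{H}_{\mathcal{G}})^{\ast}$ are well-defined topological radicals, and the monotonicity, associativity and idempotency properties of these operations recorded above are all applicable.

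For the inequality $(\mathtt{H}_{\mathcal{F}})^{a}\leq\mathcal{P}^{a}\ast\mathtt{H}_{\mathcal{G}}$: Lemma \ref{ifp}, applied with the preradical $\mathcal{P}$ and $\mathcal{R}=\mathcal{S}=\mathtt{H}_{\mathcal{G}}$, yields $\mathtt{H}_{\mathcal{F}}=\mathtt{H}_{\{\mathcal{P},\mathtt{H}_{\mathcal{G}}\}}\leq\mathcal{P}\ast\mathtt{H}_{\mathcal{G}}$. Applying $\sum\nolimits_{a}\ast(-)$, which is monotone by the first assertion of Lemma \ref{ifp}, and then associativity of convolution, we obtain $(\mathtt{H}_{\mathcal{F}})^{a}=\sum\nolimits_{a}\ast\mathtt{H}_{\mathcal{F}}\leq\sum\nolimits_{a}\ast(\mathcal{P}\ast\mathtt{H}_{\mathcal{G}})=(\sum\nolimits_{a}\ast\mathcal{P})\ast\mathtt{H}_{\mathcal{G}}=\mathcal{P}^{a}\ast\mathtt{H}_{\mathcal{G}}$.

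For the equality, one inclusion follows by applying the monotone procedure $\mathcal{R}\mapsto\mathcal{R}^{\ast}$ to the inequality just proved. For the reverse inclusion, set $\mathcal{Q}=(\mathtt{H}_{\mathcal{F}})^{a}$. From $\mathcal{P}\leq\mathtt{H}_{\mathcal{F}}$ and monotonicity of $\sum\nolimits_{a}\ast(-)$ one gets $\mathcal{P}^{a}\leq\mathcal{Q}$, and $\mathtt{H}_{\mathcal{G}}\leq\mathtt{H}_{\mathcal{F}}\leq\sum\nolimits_{a}\ast\mathtt{H}_{\mathcal{F}}=\mathcal{Q}$ since $I\subseteq\mathcal{S}\ast I$ for every closed ideal $I$. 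Monotonicity of $\ast$ in each argument then gives $\mathcal{P}^{a}\ast\mathtt{H}_{\mathcal{G}}\leq\mathcal{Q}\ast\mathcal{Q}=\mathcal{Q}_{(2)^{\ast}}$, hence $(\mathcal{P}^{a}\ast\mathtt{H}_{\mathcal{G}})^{\ast}\leq(\mathcal{Q}_{(2)^{\ast}})^{\ast}=\mathcal{Q}^{\ast}=(\mathtt{H}_{\mathcal{F}})^{a\ast}$, exactly as in the corresponding computation in the proof of Lemma \ref{ip}. Together the two inclusions give $(\mathtt{H}_{\mathcal{F}})^{a\ast}=(\mathcal{P}^{a}\ast\mathtt{H}_{\mathcal{G}})^{\ast}$.

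The chain of inequalities is routine; the delicate part is the bookkeeping of the first paragraph. The points to secure are that $\mathtt{H}_{\mathcal{F}}$ and, especially, its centralization $(\mathtt{H}_{\mathcal{F}})^{a}$ are genuine UTRs, so that $\mathcal{R}\mapsto\mathcal{R}^{\ast}$ applies to them and the identities $\mathcal{Q}\ast\mathcal{Q}=\mathcal{Q}_{(2)^{\ast}}$ and $(\mathcal{Q}_{(2)^{\ast}})^{\ast}=\mathcal{Q}^{\ast}$ are available for $\mathcal{Q}=(\mathtt{H}_{\mathcal{F}})^{a}$; and that associativity of $\ast$ persists when one factor is the preradical $\sum\nolimits_{a}$, whose values need not be closed ideals. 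The latter follows from the $q_{I}^{-1}$-description of the convolution together with the isomorphism theorems, so it is harmless but should be stated; I expect these to be the only steps requiring genuine care.
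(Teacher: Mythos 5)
Your argument is correct and follows essentially the same route as the paper's: you use Lemma~\ref{ifp} together with associativity of $\ast$ to obtain $(\mathtt{H}_{\mathcal{F}})^{a}\leq\mathcal{P}^{a}\ast\mathtt{H}_{\mathcal{G}}=(\mathcal{P}\ast\mathtt{H}_{\mathcal{G}})^{a}$, then apply $(-)^{\ast}$ for one inclusion, and for the reverse use $\mathcal{P}^{a}\ast\mathtt{H}_{\mathcal{G}}\leq\mathcal{Q}\ast\mathcal{Q}$ and the stabilization of the convolution chain. The only cosmetic difference is that the paper phrases the reverse inclusion in terms of $\mathcal{R}=(\mathtt{H}_{\mathcal{F}})^{a\ast}$, using $\mathcal{R}\ast\mathcal{R}=\mathcal{R}$ and $\mathcal{R}^{a}=\mathcal{R}$, while you work with $\mathcal{Q}=(\mathtt{H}_{\mathcal{F}})^{a}$ and invoke $(\mathcal{Q}_{(2)^{\ast}})^{\ast}=\mathcal{Q}^{\ast}$, exactly as in Lemma~\ref{ip}; the side remarks you flag (absorption of inner closures, associativity with the preradical $\sum\nolimits_{a}$) are indeed needed and are covered by the cited lemmas.
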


\begin{proof}
Let $\mathcal{T}=\mathtt{H}_{\mathcal{G}}$. Then $\mathcal{T}$ is a UTR. As
the convolution operation is associative then
\begin{equation}
\mathcal{P}^{a}\ast\mathcal{T}=\left(  {\sum\nolimits_{a}}\ast\mathcal{P}%
\right)  \ast\mathcal{T}={\sum\nolimits_{a}}\ast\left(  \mathcal{P}%
\ast\mathcal{T}\right)  =\left(  \mathcal{P}\ast\mathcal{T}\right)  ^{a}.
\label{pt}%
\end{equation}
By Lemma \ref{ifp}, $\mathtt{H}_{\mathcal{F}}\leq\mathcal{P}\ast\mathcal{T}$.
Then
\begin{equation}
\left(  \mathtt{H}_{\mathcal{F}}\right)  ^{a}\leq\left(  \mathcal{P}%
\ast\mathcal{T}\right)  ^{a}. \label{hf}%
\end{equation}

Let $\mathcal{R}=\left(  \mathtt{H}_{\mathcal{F}}\right)  ^{a\ast}$ and
$\mathcal{S}=\left(  \mathcal{P}\ast\mathcal{T}\right)  ^{a\ast}$. It follows
from (\ref{hf}) that $\left(  \mathtt{H}_{\mathcal{F}}\right)  ^{a}\leq\left(
\mathcal{P}\ast\mathcal{T}\right)  ^{a}\leq\mathcal{S}$ and, therefore,
\[
\mathcal{R}=\left(  \mathtt{H}_{\mathcal{F}}\right)  ^{a\ast}\leq
\mathcal{S}^{\ast}=\mathcal{S}.
\]

On the other hand, $\mathtt{H}_{\mathcal{F}}\leq\mathcal{R}$ whence
$\mathcal{P}\ast\mathcal{T}\leq\mathcal{R}\ast\mathcal{R}=\mathcal{R}$,
$\left(  \mathcal{P}\ast\mathcal{T}\right)  ^{a}\leq\mathcal{R}^{a}%
=\mathcal{R}$ and
\[
\mathcal{S}=\left(  \mathcal{P}\ast\mathcal{T}\right)  ^{a\ast}\leq
\mathcal{R}^{\ast}=\mathcal{R}.
\]

\end{proof}

Sometimes $\mathcal{P}^{a}$ is a topological radical if $\mathcal{P}$ is a
topological radical. We have the following

\begin{theorem}
\cite[Theorem 5.13]{ShT12} $\mathcal{R}_{\mathrm{cq}}^{a}$ is a hereditary
$BW$-radical.
\end{theorem}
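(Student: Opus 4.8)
The plan is to establish, for $\mathcal{P}:=\mathcal{R}_{\mathrm{cq}}^{a}$, the three facts that together constitute the statement: that $\mathcal{P}$ is a topological radical, that it is hereditary, and that $\mathcal{P}(A)$ is a $BW$-ideal of every Banach algebra $A$. Since every nilpotent ideal is compactly quasinilpotent, $\sum\nolimits_{\beta}\leq\mathcal{R}_{\mathrm{cq}}$, so by \cite[Theorem 5.3]{ShT14} the map $\mathcal{P}$ is a UTR; thus $(\mathrm{H1})$, $(\mathrm{I1})$, $(\mathrm{I2})$ hold automatically, and $\mathcal{P}(A)$ is the largest closed ideal $J$ of $A$ with $[J,J]\subseteq\mathcal{R}_{\mathrm{cq}}(A)$. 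Moreover $A/\mathcal{R}_{\mathrm{cq}}(A)$ is $\mathcal{R}_{\mathrm{cq}}$-semisimple, hence (nilpotent ideals being compactly quasinilpotent) semiprime, so $\mathcal{P}(A)/\mathcal{R}_{\mathrm{cq}}(A)=\sum\nolimits_{a}(A/\mathcal{R}_{\mathrm{cq}}(A))$ is the largest central ideal of $A/\mathcal{R}_{\mathrm{cq}}(A)$ by \cite[Lemma 5.1]{ShT14}. Only $(\mathrm{H3})$, $(\mathrm{H2})$ and the $BW$-property remain.

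Heredity $(\mathrm{H3})$ is quick: for a closed ideal $I$ of $A$ the inclusion $\mathcal{P}(I)\subseteq I\cap\mathcal{P}(A)$ is $(\mathrm{I2})$, while $L:=I\cap\mathcal{P}(A)$ is a closed ideal of $I$ with
\[
[L,L]\subseteq[\mathcal{P}(A),\mathcal{P}(A)]\cap I\subseteq\mathcal{R}_{\mathrm{cq}}(A)\cap I=\mathcal{R}_{\mathrm{cq}}(I)
\]
(using that $\mathcal{R}_{\mathrm{cq}}$ is hereditary), so $L\subseteq\mathcal{P}(I)$ by maximality; hence $\mathcal{P}(I)=I\cap\mathcal{P}(A)$. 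For the $BW$-property, fix $A$ and put $J=\mathcal{P}(A)$, $I=\mathcal{R}_{\mathrm{cq}}(A)\subseteq J$. By Theorem \ref{iness}, $I\in BW(A)$, and $J/I$ is a commutative closed ideal of $A/I$. Using the known fact that commutative Banach algebras are Berger--Wang algebras (the Jacobson radical of a commutative Banach algebra is compactly quasinilpotent, and the semisimple quotient embeds into a commutative $C^{\ast}$-algebra, where $\rho=r$), one checks that every commutative closed ideal of a Banach algebra is a $BW$-ideal, so $J/I\in BW(A/I)$; by transitivity of $BW$-ideals (\cite{ShT12}) we get $J=\mathcal{P}(A)\in BW(A)$. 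Consequently, once $(\mathrm{H2})$ is proved, $\mathcal{P}$ is a topological radical all of whose values are $BW$-ideals --- a $BW$-radical --- and, by the preceding lines, hereditary, which is exactly the assertion.

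It remains to prove $(\mathrm{H2})$: $\mathcal{P}(B)=0$ for $B=A/\mathcal{P}(A)$. Assume $K:=\mathcal{P}(B)\neq 0$. The key step is that $\mathcal{R}_{\mathrm{cq}}(B)=0$. Indeed, if $\mathcal{R}_{\mathrm{cq}}(B)\neq 0$, let $\widehat{Q}$ be its preimage in $A$ (so $\widehat{Q}\supsetneq\mathcal{P}(A)$); then $\widehat{Q}/\mathcal{P}(A)$ is compactly quasinilpotent, hence in $BW(A/\mathcal{P}(A))$ by Theorem \ref{iness}, whence $\widehat{Q}\in BW(A)$ by transitivity together with $\mathcal{P}(A)\in BW(A)$. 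Thus $\widehat{Q}$ is a Berger--Wang algebra, so $\mathrm{Rad}(\widehat{Q})$ is compactly quasinilpotent and, by heredity of $\mathcal{R}_{\mathrm{cq}}$, $\mathrm{Rad}(\widehat{Q})=\mathcal{R}_{\mathrm{cq}}(\widehat{Q})=\widehat{Q}\cap\mathcal{R}_{\mathrm{cq}}(A)=\mathcal{R}_{\mathrm{cq}}(A)$. Hence in the semiprime algebra $C=A/\mathcal{R}_{\mathrm{cq}}(A)$ the ideal $S:=\widehat{Q}/\mathcal{R}_{\mathrm{cq}}(A)$ is semisimple, contains the largest central ideal $D:=\mathcal{P}(A)/\mathcal{R}_{\mathrm{cq}}(A)$, and $S/D=\mathcal{R}_{\mathrm{cq}}(B)$ is a radical algebra (being compactly quasinilpotent). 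Any irreducible representation $\pi$ of $S$ either vanishes on $D$, and then factors through $S/D$, which, being radical, has no non-zero irreducible representations, so $\pi=0$; or $\pi(D)=\mathbb{C}I$ by Schur's lemma, which forces $\mathbb{C}I$ to be an ideal of $\pi(S)$ and hence $\pi(S)=\mathbb{C}I$, i.e. $\dim\pi=1$. So $S$, being semisimple, is separated by its characters, hence commutative; as a commutative ideal of the semiprime algebra $C$ it is then central, so $S\subseteq Z(C)$, whence $S=D$ by maximality of $D$ and $\mathcal{R}_{\mathrm{cq}}(B)=0$ --- a contradiction. Granting $\mathcal{R}_{\mathrm{cq}}(B)=0$, we have $[K,K]\subseteq\mathcal{R}_{\mathrm{cq}}(B)=0$, so $K$ is a commutative ideal of $B$; lifting, the preimage $\widetilde{K}$ of $K$ in $A$ satisfies $[\widetilde{K},\widetilde{K}]\subseteq\mathcal{P}(A)$, so its image $E$ in $C$ is an ideal with $[E,E]\subseteq D$ and hence $[[E,E],E]\subseteq[D,E]=0$, i.e. $E$ is Lie-nilpotent. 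By the classical fact that a Lie-nilpotent ideal of a semiprime algebra is central (Herstein), $E\subseteq Z(C)$, hence $E\subseteq D$, hence $\widetilde{K}\subseteq\mathcal{P}(A)$, i.e. $K=0$, a contradiction. This proves $(\mathrm{H2})$ and completes the argument. I expect $(\mathrm{H2})$, and within it the identity $\mathcal{R}_{\mathrm{cq}}(A/\mathcal{P}(A))=0$ --- where one genuinely uses the algebra structure via irreducible representations and the classification of Lie-nilpotent ideals of semiprime algebras --- to be the main point; the identification of commutative closed ideals as $BW$-ideals, needed for the $BW$-property, is the other non-routine input.
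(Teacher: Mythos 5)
Your high-level architecture (UTR from \cite[Theorem 5.3]{ShT14}, heredity, BW-property, then $(\mathrm{H2})$) is sensible, and the heredity step $(\mathrm{H3})$ is clean and correct. But there are two real problems.

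The serious gap is the claim ``one checks that every commutative closed ideal of a Banach algebra is a $BW$-ideal.'' That assertion is not a consequence of commutative Banach algebras being Berger--Wang algebras. Being a Berger--Wang algebra concerns precompact sets \emph{inside} $J$ ($\rho(M)=r(M)$ for $M\subseteq J$), whereas $J\in BW(A)$ is a statement about all precompact $M\subseteq A$: $\rho(M)=\max\{\rho(M/J),r(M)\}$. The passage from the first to the second is exactly the hard content of the Berger--Wang-type theorems, and embedding the semisimple quotient of $J$ into a commutative $C^{\ast}$-algebra gives you nothing about sets $M\not\subseteq J$. What is actually true --- and what the paper itself uses in the proof of Lemma~\ref{bwha} --- is \cite[Lemma 5.5]{ShT12}: for a \emph{semiprime} Banach algebra $B$, the largest central ideal $\sum\nolimits_a(B)$ satisfies $\rho(N)=\max\{\rho(N/\sum\nolimits_a(B)),r_1(N)\}$ for all precompact $N\subseteq B$. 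Centrality (not mere commutativity of the ideal) and semiprimeness of $B$ are both used in that lemma. Fortuitously, your application is exactly in this case ($J/I=\sum\nolimits_a(A/\mathcal{R}_{\mathrm{cq}}(A))$ with $A/\mathcal{R}_{\mathrm{cq}}(A)$ semiprime), so the \emph{conclusion} of this paragraph is correct, but the justification you offer does not prove it; you need to invoke \cite[Lemma 5.5]{ShT12} (or re-derive it), not the $C^{\ast}$-embedding remark.

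A lesser issue is the appeal to ``a Lie-nilpotent ideal of a semiprime algebra is central (Herstein)'' to finish $(\mathrm{H2})$. That is a delicate result to cite bare, and you also discard information you actually have: once $\mathcal{R}_{\mathrm{cq}}(B)=0$, $B$ is semiprime, so $K=\mathcal{P}(B)=\sum\nolimits_a(B)$ is \emph{central} in $B$, not merely commutative. Hence the preimage $\widetilde{K}$ satisfies $[\widetilde{K},A]\subseteq\mathcal{P}(A)$, so that in $C=A/\mathcal{R}_{\mathrm{cq}}(A)$ one gets $[E,C]\subseteq D\subseteq Z(C)$. From $[e,c]\in Z(C)$ for all $c\in C$ one obtains, using $[e,c^2]=2c[e,c]\in Z(C)$ and semiprimeness, that $[e,c]^2=0$ and hence $[e,c]=0$; this gives $E\subseteq Z(C)$ directly and avoids the Lie-nilpotence theorem entirely. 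The representation-theoretic step proving $\mathcal{R}_{\mathrm{cq}}(B)=0$ is a nice observation and appears sound, but note that it depends on $\mathcal{P}(A)\in BW(A)$, i.e.\ on the BW-property step, which is the one with the gap --- so the gap propagates into $(\mathrm{H2})$ as well.

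\end{document}
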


\begin{corollary}
\cite[Corollary 5.15]{ShT12} $\mathcal{R}_{\mathrm{hc}}\vee\mathcal{R}%
_{\mathrm{cq}}^{a}$ is a $BW$-radical.
\end{corollary}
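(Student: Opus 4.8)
The plan is to deduce the corollary from Theorem~\ref{bw}, which produces the largest $BW$-radical $\mathcal{R}_{\mathrm{bw}}$ and, more importantly, asserts that \emph{every} topological radical $\mathcal{P}$ with $\mathcal{P}\leq\mathcal{R}_{\mathrm{bw}}$ is itself a $BW$-radical. Thus it suffices to check two things: that $\mathcal{R}_{\mathrm{hc}}\vee\mathcal{R}_{\mathrm{cq}}^{a}$ is a topological radical, and that it is dominated by $\mathcal{R}_{\mathrm{bw}}$.

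First I would record that both constituents are $BW$-radicals. For $\mathcal{R}_{\mathrm{hc}}$ this is immediate from Theorem~\ref{hcf} applied to the hypocompact closed ideal $\mathcal{R}_{\mathrm{hc}}(A)$; for $\mathcal{R}_{\mathrm{cq}}^{a}$ it is exactly the content of the preceding theorem, \cite[Theorem~5.13]{ShT12}, which moreover tells us that $\mathcal{R}_{\mathrm{cq}}^{a}$ is a (hereditary) topological radical, not merely a UTR. Since by definition $\mathcal{R}_{\mathrm{bw}}=\vee\mathcal{F}$, where $\mathcal{F}$ is the family of all $BW$-radicals, and both $\mathcal{R}_{\mathrm{hc}}$ and $\mathcal{R}_{\mathrm{cq}}^{a}$ belong to $\mathcal{F}$, we obtain $\mathcal{R}_{\mathrm{hc}}\leq\mathcal{R}_{\mathrm{bw}}$ and $\mathcal{R}_{\mathrm{cq}}^{a}\leq\mathcal{R}_{\mathrm{bw}}$.

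Now $\mathcal{R}_{\mathrm{hc}}$ and $\mathcal{R}_{\mathrm{cq}}^{a}$ are topological radicals, hence UTRs, so by the construction~(\ref{fh}) and the discussion following it $\mathcal{R}_{\mathrm{hc}}\vee\mathcal{R}_{\mathrm{cq}}^{a}$ is a genuine topological radical --- indeed the smallest one that is no less than each of the two. Since $\mathcal{R}_{\mathrm{bw}}$ is itself a topological radical lying above both of them, minimality forces $\mathcal{R}_{\mathrm{hc}}\vee\mathcal{R}_{\mathrm{cq}}^{a}\leq\mathcal{R}_{\mathrm{bw}}$. A final application of Theorem~\ref{bw} then gives that $\mathcal{R}_{\mathrm{hc}}\vee\mathcal{R}_{\mathrm{cq}}^{a}$ is a $BW$-radical, as claimed.

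There is essentially no obstacle along this route: all the difficulty has been absorbed into Theorem~\ref{bw} and the theorem on $\mathcal{R}_{\mathrm{cq}}^{a}$, and the only point needing care is that one really must know $\mathcal{R}_{\mathrm{cq}}^{a}$ is a topological radical (so that the join lands in the class of topological radicals and the comparison with $\mathcal{R}_{\mathrm{bw}}$ is meaningful). If one preferred a proof not invoking $\mathcal{R}_{\mathrm{bw}}$, there is a more hands-on alternative: by Lemma~\ref{ip} one has $\mathcal{R}_{\mathrm{hc}}\vee\mathcal{R}_{\mathrm{cq}}^{a}=(\mathcal{R}_{\mathrm{hc}}\ast\mathcal{R}_{\mathrm{cq}}^{a})^{\ast}$, so $(\mathcal{R}_{\mathrm{hc}}\vee\mathcal{R}_{\mathrm{cq}}^{a})(A)$ is the top of an increasing transfinite chain of closed ideals whose successive quotients are alternately $\mathcal{R}_{\mathrm{cq}}^{a}$-radical and $\mathcal{R}_{\mathrm{hc}}$-radical, hence $BW$-ideals of the corresponding quotient algebras; then Proposition~\ref{ESBW} (transfinite stability of $BW$-ideals) finishes the argument. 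I would present the first, shorter route.
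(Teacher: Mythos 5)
Your proof is correct, and it is the route the paper sets up: the corollary is placed immediately after the statement that $\mathcal{R}_{\mathrm{cq}}^{a}$ is a hereditary $BW$-radical, and the paper has already recorded both that $\mathcal{R}_{\mathrm{hc}}$ is a $BW$-radical (via Theorem \ref{hcf}) and that any topological radical dominated by $\mathcal{R}_{\mathrm{bw}}$ is again a $BW$-radical (Theorem \ref{bw}). Your two checks --- that $\mathcal{R}_{\mathrm{hc}}\vee\mathcal{R}_{\mathrm{cq}}^{a}$ is a topological radical, and that it lies below $\mathcal{R}_{\mathrm{bw}}$ by the minimality of $\vee$ --- are exactly what is needed, and your alternative via Lemma \ref{ip} and Proposition \ref{ESBW} is a legitimate, slightly more self-contained substitute that avoids invoking $\mathcal{R}_{\mathrm{bw}}$ at all.
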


\subsection{Centralization of $BW$-radicals and continuity of the joint
spectral radius}

\begin{lemma}
\label{bwha} $\mathcal{R}_{\mathrm{bw}}^{a}\left(  A\right)  $ and
$\mathcal{R}_{\mathrm{sbw}}^{a}\left(  A\right)  $ are $BW$-ideals for every
Banach algebra $A$.
\end{lemma}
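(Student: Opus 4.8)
The plan is to handle the two ideals in one stroke: write $\mathcal{P}$ for either $\mathcal{R}_{\mathrm{bw}}$ or $\mathcal{R}_{\mathrm{sbw}}$ and split the assertion ``$\mathcal{P}^{a}(A)\in BW(A)$'' along the quotient by $\mathcal{P}(A)$. Concretely, I would invoke the extension property of $BW$-ideals recalled just before Proposition~\ref{ESBW}: if $I\subset J$ are closed ideals of $A$ with $I\in BW(A)$ and $J/I\in BW(A/I)$, then $J\in BW(A)$. So it suffices to show (i) $\mathcal{P}(A)\in BW(A)$ and (ii) $\mathcal{P}^{a}(A)/\mathcal{P}(A)\in BW(A/\mathcal{P}(A))$, and the whole point is that the ``new'' piece in (ii) is a commutative closed ideal, for which $BW$-ness is already known.

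First I would record the preliminaries. The radical $\mathcal{R}_{\mathrm{bw}}$ is a $BW$-radical by Theorem~\ref{bw}, and $\mathcal{R}_{\mathrm{sbw}}=\mathcal{R}_{\mathrm{sc}}\wedge\mathcal{R}_{\mathrm{bw}}$ is a topological radical with $\mathcal{R}_{\mathrm{sbw}}\leq\mathcal{R}_{\mathrm{bw}}$, hence a $BW$-radical by the second assertion of Theorem~\ref{bw}; thus in both cases $\mathcal{P}$ is a $BW$-radical, which already gives (i). Next, $\sum\nolimits_{\beta}\leq\mathcal{R}_{\mathrm{cq}}$ (nilpotent ideals have nilpotent closures and so are compactly quasinilpotent), while $\mathcal{R}_{\mathrm{cq}}\leq\mathcal{R}_{\mathrm{bw}}$ (Theorem~\ref{iness}) and $\mathcal{R}_{\mathrm{cq}}\leq\mathrm{Rad}\leq\mathcal{R}_{\mathrm{sc}}$, so $\mathcal{R}_{\mathrm{cq}}\leq\mathcal{R}_{\mathrm{sc}}\wedge\mathcal{R}_{\mathrm{bw}}=\mathcal{R}_{\mathrm{sbw}}$; hence $\sum\nolimits_{\beta}\leq\mathcal{P}$ in either case. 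By the centralization construction this means that $\mathcal{P}^{a}={\sum\nolimits_{a}}\ast\mathcal{P}$ is a UTR, so $\mathcal{P}^{a}(A)$ is a closed ideal of $A$ containing $\mathcal{P}(A)$, and $C:=\mathcal{P}^{a}(A)/\mathcal{P}(A)$ is a commutative closed ideal of $B:=A/\mathcal{P}(A)$, since $\mathcal{P}^{a}(A)$ is by construction the largest ideal of $A$ commutative modulo $\mathcal{P}(A)$.

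The crux is step (ii): the commutative closed ideal $C$ of $B$ is a $BW$-ideal of $B$. Here I would appeal to \cite[Theorem~5.13]{ShT12}, that $\mathcal{R}_{\mathrm{cq}}^{a}$ is a hereditary $BW$-radical. Since $\sum\nolimits_{\beta}\leq\mathcal{R}_{\mathrm{cq}}$, the ideal $\mathcal{R}_{\mathrm{cq}}^{a}(B)$ is the largest ideal of $B$ commutative modulo $\mathcal{R}_{\mathrm{cq}}(B)$; as $C$ is a commutative ideal of $B$ its image in $B/\mathcal{R}_{\mathrm{cq}}(B)$ is commutative, so $C\subseteq\mathcal{R}_{\mathrm{cq}}^{a}(B)$. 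Because $\mathcal{R}_{\mathrm{cq}}^{a}(B)\in BW(B)$ and a closed subideal of a $BW$-ideal is again a $BW$-ideal, we get $C\in BW(B)$. Combining (i) and (ii) via the extension property yields $\mathcal{P}^{a}(A)\in BW(A)$, and specializing $\mathcal{P}$ to $\mathcal{R}_{\mathrm{bw}}$ and to $\mathcal{R}_{\mathrm{sbw}}$ proves the lemma.

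I do not expect a genuine obstacle: the argument only reassembles facts already in hand. The one delicate point is the bookkeeping that guarantees $\mathcal{P}^{a}(A)$ really is a \emph{closed} ideal and $\mathcal{P}^{a}(A)/\mathcal{P}(A)$ really is commutative, which is exactly why $\sum\nolimits_{\beta}\leq\mathcal{P}$ must be verified before the centralization procedure is applied. I would also note that for $\mathcal{P}=\mathcal{R}_{\mathrm{bw}}$ the ideal $C$ is in fact $(0)$, since $\mathcal{R}_{\mathrm{cq}}^{a}\leq\mathcal{R}_{\mathrm{bw}}$ forces $C\subseteq\mathcal{R}_{\mathrm{cq}}^{a}(B)\subseteq\mathcal{R}_{\mathrm{bw}}(B)=(0)$, so that $\mathcal{R}_{\mathrm{bw}}^{a}=\mathcal{R}_{\mathrm{bw}}$ and that half of the statement is automatic; the actual content sits in the $\mathcal{R}_{\mathrm{sbw}}^{a}$ part, where $C$ may well be non-zero.
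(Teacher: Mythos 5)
Your proof is correct, and it takes a genuinely different route from the paper's. The paper first shows $\mathcal{R}_{\mathrm{bw}}^{a}(A)\in BW(A)$ by passing to $B=A/\mathcal{R}_{\mathrm{bw}}(A)$, noting $B$ is semiprime (so ${\sum\nolimits_{a}}(B)$ is its largest central ideal and is closed), and then invoking \cite[Lemma~5.5]{ShT12} directly: this gives $\rho(N)=\max\{\rho(N/{\sum\nolimits_{a}}(B)),r_{1}(N)\}$ for precompact $N\subset B$, hence ${\sum\nolimits_{a}}(B)\in BW(B)$, and extension stability finishes the $\mathcal{R}_{\mathrm{bw}}^{a}$ case; the $\mathcal{R}_{\mathrm{sbw}}^{a}$ case then drops out because $\mathcal{R}_{\mathrm{sbw}}^{a}(A)\subseteq\mathcal{R}_{\mathrm{bw}}^{a}(A)$ and $BW$-ideals are closed under passage to sub-ideals. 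You instead treat both cases uniformly: you set $\mathcal{P}\in\{\mathcal{R}_{\mathrm{bw}},\mathcal{R}_{\mathrm{sbw}}\}$, verify ${\sum\nolimits_{\beta}}\leq\mathcal{P}$ so that $\mathcal{P}^{a}(A)$ is a closed ideal and $C:=\mathcal{P}^{a}(A)/\mathcal{P}(A)$ is a commutative closed ideal of $B=A/\mathcal{P}(A)$, and then you replace \cite[Lemma~5.5]{ShT12} by \cite[Theorem~5.13]{ShT12}: since $C$ is commutative, $C\subseteq\mathcal{R}_{\mathrm{cq}}^{a}(B)$, and $\mathcal{R}_{\mathrm{cq}}^{a}(B)$ is a $BW$-ideal, so $C\in BW(B)$ by downward closure, and extension stability concludes. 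The gain in your version is uniformity: both halves of the lemma come out of one argument, and you do not need the intermediate containment $\mathcal{R}_{\mathrm{sbw}}^{a}\leq\mathcal{R}_{\mathrm{bw}}^{a}$; as a bonus you observe along the way that for $\mathcal{P}=\mathcal{R}_{\mathrm{bw}}$ the layer $C$ is already $(0)$, anticipating the equality $\mathcal{R}_{\mathrm{bw}}^{a}=\mathcal{R}_{\mathrm{bw}}$ proved in the following theorem. The trade-off is that the paper's route exhibits the sharper formula with $r_{1}$ explicitly, which is informative in its own right. Both arguments ultimately lean on structural results from \cite{ShT12}, just different ones.
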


\begin{proof}
Indeed, $\rho\left(  M\right)  =\max\left\{  \rho\left(  M/\mathcal{R}%
_{\mathrm{bw}}\left(  A\right)  \right)  ,r\left(  M\right)  \right\}  $ for
every precompact set $M$ in $A$ by definition of $\mathcal{R}_{\mathrm{bw}}$.
Let $B=A/\mathcal{R}_{\mathrm{bw}}\left(  A\right)  $ and $N=M/\mathcal{R}%
_{\mathrm{bw}}\left(  A\right)  $. As $%
{\textstyle\sum\nolimits_{\beta}}
\leq\mathcal{R}_{\mathrm{cq}}\leq\mathcal{R}_{\mathrm{bw}}$, $B$ is semiprime
and $%
{\textstyle\sum\nolimits_{a}}
\left(  B\right)  $ is the largest central ideal of $B.$ It is clear that $%
{\textstyle\sum\nolimits_{a}}
\left(  B\right)  $ is closed. By \cite[Lemma 5.5]{ShT12},%
\[
\rho\left(  N\right)  =\max\left\{  \rho\left(  N/%
{\textstyle\sum\nolimits_{a}}
\left(  B\right)  \right)  ,r_{1}\left(  N\right)  \right\}
\]
where $r_{1}\left(  N\right)  =\sup\left\{  \rho\left(  a\right)  \text{:
}a\in N\right\}  \leq r\left(  N\right)  $. Hence $%
{\textstyle\sum\nolimits_{a}}
\left(  B\right)  =\mathcal{R}_{\mathrm{bw}}^{a}\left(  A\right)
/\mathcal{R}_{\mathrm{bw}}\left(  A\right)  $ and $\mathcal{R}_{\mathrm{bw}%
}\left(  A\right)  $ are $BW$-ideals. By Proposition \ref{ESBW}, $BW$-ideals
are stable with respect to extensions. So $\mathcal{R}_{\mathrm{bw}}%
^{a}\left(  A\right)  $ is a $BW$-ideal.

As $\mathcal{R}_{\mathrm{sbw}}^{a}\left(  A\right)  \subseteq\mathcal{R}%
_{\mathrm{bw}}^{a}\left(  A\right)  ,$ then $\mathcal{R}_{\mathrm{sbw}}%
^{a}\left(  A\right)  $ is also a $BW$-ideal.
\end{proof}

\begin{theorem}
$\mathcal{R}_{\mathrm{sbw}}^{a\ast}$ is a $BW$-radical and $\mathcal{R}%
_{\mathrm{bw}}^{a}=\mathcal{R}_{\mathrm{bw}}$.
\end{theorem}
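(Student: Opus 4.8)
The plan is to derive both assertions from a single observation: \emph{if $\mathcal{Q}$ is a $BW$-radical with $\sum_{\beta}\leq\mathcal{Q}$, then the convolution $\mathcal{Q}^{a\ast}$ is again a $BW$-radical}. Granting this, the first assertion is the case $\mathcal{Q}=\mathcal{R}_{\mathrm{sbw}}$, and the second follows from the case $\mathcal{Q}=\mathcal{R}_{\mathrm{bw}}$ together with the maximality of $\mathcal{R}_{\mathrm{bw}}$ among $BW$-radicals (Theorem \ref{bw}): one gets $\mathcal{R}_{\mathrm{bw}}^{a\ast}\leq\mathcal{R}_{\mathrm{bw}}$, while the inclusions $\mathcal{R}_{\mathrm{bw}}\leq\mathcal{R}_{\mathrm{bw}}^{a}\leq\mathcal{R}_{\mathrm{bw}}^{a\ast}$ are automatic. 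Indeed, for any closed ideal map $\mathcal{Q}$ one has $\mathcal{Q}\leq\mathcal{Q}^{a}$, since $\mathcal{Q}^{a}(A)=\sum_{a}\ast\,\mathcal{Q}(A)$ is the $q_{\mathcal{Q}(A)}$-preimage of a subset of $A/\mathcal{Q}(A)$ containing $0$, and $\mathcal{Q}^{a}\leq\mathcal{Q}^{a\ast}$ because the first nontrivial term $I_{1}$ of the convolution chain for $\mathcal{Q}^{a}$ already equals $\mathcal{Q}^{a}(A)$. Hence $\mathcal{R}_{\mathrm{bw}}^{a}=\mathcal{R}_{\mathrm{bw}}$.

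To verify the observation, I would first check the hypothesis $\sum_{\beta}\leq\mathcal{Q}$ for the two radicals involved. Every nilpotent ideal $N$ is compactly quasinilpotent (a precompact $M\subseteq N$ satisfies $M^{k}\subseteq N^{k}=0$ for some $k$, so $\rho(M)=0$), hence $\mathcal{R}_{\mathrm{cq}}(N)=N$, and heredity of $\mathcal{R}_{\mathrm{cq}}$ gives $N=\mathcal{R}_{\mathrm{cq}}(N)=N\cap\mathcal{R}_{\mathrm{cq}}(A)\subseteq\mathcal{R}_{\mathrm{cq}}(A)$; summing, $\sum_{\beta}\leq\mathcal{R}_{\mathrm{cq}}$. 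Since $\mathcal{R}_{\mathrm{cq}}\leq\mathcal{R}_{\mathrm{hc}}\vee\mathcal{R}_{\mathrm{cq}}=\mathcal{R}_{\mathrm{sbw}}\leq\mathcal{R}_{\mathrm{bw}}$ (using Theorem \ref{hc} for the equality and $\mathcal{R}_{\mathrm{sbw}}=\mathcal{R}_{\mathrm{sc}}\wedge\mathcal{R}_{\mathrm{bw}}$ for the last inclusion), the hypothesis holds for $\mathcal{Q}=\mathcal{R}_{\mathrm{sbw}}$ and $\mathcal{Q}=\mathcal{R}_{\mathrm{bw}}$. Then, by the result recalled in the preceding subsection (\cite[Theorem 5.3]{ShT14}), $\mathcal{Q}^{a}$ is a UTR; and applying the convolution procedure to a UTR yields a topological radical, so $\mathcal{Q}^{a\ast}$ is a topological radical.

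It remains to show $\mathcal{Q}^{a\ast}(A)$ is a $BW$-ideal for every Banach algebra $A$. By Lemma \ref{bwha}, $\mathcal{R}_{\mathrm{bw}}^{a}(A)$ and $\mathcal{R}_{\mathrm{sbw}}^{a}(A)$ are $BW$-ideals, i.e. $\mathcal{Q}^{a}(A)$ is a $BW$-ideal for both choices of $\mathcal{Q}$. I would then consider the increasing transfinite chain $(I_{\alpha})_{\alpha\leq\gamma}$ defined by $I_{0}=(0)$ and $I_{\alpha+1}=q_{I_{\alpha}}^{-1}(\mathcal{Q}^{a}(A/I_{\alpha}))$, which stabilizes at $I_{\gamma}=\mathcal{Q}^{a\ast}(A)$. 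Here $I_{0}=(0)\in BW(A)$ since $r(M)\leq\rho(M)$ always, and each quotient $I_{\alpha+1}/I_{\alpha}=\mathcal{Q}^{a}(A/I_{\alpha})$ is a $BW$-ideal of $A/I_{\alpha}$; so Proposition \ref{ESBW} gives $\mathcal{Q}^{a\ast}(A)=I_{\gamma}\in BW(A)$. This proves the observation, and hence both statements.

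The step requiring the most care is the passage from the UTR $\mathcal{Q}^{a}$ to the genuine topological radical $\mathcal{Q}^{a\ast}$: one must confirm that the convolution procedure supplies axiom $(\mathrm{H2})$ while preserving $(\mathrm{H1})$, $(\mathrm{I1})$, $(\mathrm{I2})$, and the parallel point that the convolution chain $(I_{\alpha})$ is an increasing transfinite chain of closed ideals in exactly the sense demanded by Proposition \ref{ESBW}, so that transfinite stability of $BW$-ideals applies verbatim. Everything else is formal, resting on Lemma \ref{bwha} and the maximality of $\mathcal{R}_{\mathrm{bw}}$.
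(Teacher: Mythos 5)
Your argument is correct and follows essentially the same route as the paper: apply Lemma \ref{bwha} together with Proposition \ref{ESBW} to the increasing transfinite convolution chain to show $\mathcal{P}^{a\ast}(A)$ is a $BW$-ideal for $\mathcal{P}\in\{\mathcal{R}_{\mathrm{sbw}},\mathcal{R}_{\mathrm{bw}}\}$, then invoke maximality of $\mathcal{R}_{\mathrm{bw}}$ to squeeze $\mathcal{R}_{\mathrm{bw}}\leq\mathcal{R}_{\mathrm{bw}}^{a}\leq\mathcal{R}_{\mathrm{bw}}^{a\ast}\leq\mathcal{R}_{\mathrm{bw}}$. The only cosmetic point is that your opening ``single observation'' is stated more generally than you actually prove it (its verification rests on Lemma \ref{bwha}, which is specific to the two radicals at hand), but you apply it only to those two cases, so the argument is sound.
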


\begin{proof}
Let $\mathcal{P}$ be $\mathcal{R}_{\mathrm{sbw}}$ or $\mathcal{R}%
_{\mathrm{bw}}$. Clearly, ${\sum\nolimits_{\beta}}\leq\mathcal{P}$. Let $A$ be
a Banach algebra, and let $\left(  J_{\alpha}\right)  _{\alpha\leq\gamma+1}$
be an increasing transfinite chain of closed ideals of $A$ such that
$J_{1}=\mathcal{P}^{a}\left(  A\right)  $ and $J_{\gamma+1}=J_{\gamma}$, and
$J_{\alpha+1}/J_{\alpha}=\mathcal{P}^{a}\left(  A/J_{\alpha}\right)  $ for all
$\alpha\leq\gamma$. By Lemma \ref{bwha}, ideals $J_{1}$ and $J_{\alpha
+1}/J_{\alpha}$ are $BW$-ideals. By Proposition \ref{ESBW}, $\mathcal{P}%
^{a\ast}\left(  A\right)  $ is a $BW$-ideal for every Banach algebra $A$, that
is, $\mathcal{P}^{a\ast}$ is a $BW$-radical.

As $\mathcal{R}_{\mathrm{bw}}$ is the largest $BW$-radical, then
$\mathcal{R}_{\mathrm{bw}}^{a\ast}\leq\mathcal{R}_{\mathrm{bw}}$. We obtain
that
\[
\mathcal{R}_{\mathrm{bw}}\leq\mathcal{R}_{\mathrm{bw}}^{a}\leq\mathcal{R}%
_{\mathrm{bw}}^{a\ast}\leq\mathcal{R}_{\mathrm{bw}}%
\]
whence $\mathcal{R}_{\mathrm{bw}}^{a}=\mathcal{R}_{\mathrm{bw}}$.
\end{proof}

This formally gives the following

\begin{corollary}
Any Banach algebra $A$ commutative modulo the radical $\mathcal{R}%
_{\mathrm{bw}}\left(  A\right)  $ is $\mathcal{R}_{\mathrm{bw}}$-radical.
\end{corollary}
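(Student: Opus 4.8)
The plan is to read this off directly from the two facts just established, together with the structural description of the centralization $\mathcal{P}^{a}$ recalled earlier in the paper: whenever $\sum\nolimits_{\beta}\leq\mathcal{P}$, the ideal $\mathcal{P}^{a}(A)$ is the \emph{largest} ideal of $A$ that is commutative modulo $\mathcal{P}(A)$. Applying this with $\mathcal{P}=\mathcal{R}_{\mathrm{bw}}$ translates the hypothesis ``$A$ is commutative modulo $\mathcal{R}_{\mathrm{bw}}(A)$'' into the equality $\mathcal{R}_{\mathrm{bw}}^{a}(A)=A$, after which the identity $\mathcal{R}_{\mathrm{bw}}^{a}=\mathcal{R}_{\mathrm{bw}}$ from the preceding theorem closes the argument.

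First I would check that the hypotheses of the quoted description of $\mathcal{P}^{a}$ hold for $\mathcal{P}=\mathcal{R}_{\mathrm{bw}}$. Since nilpotent ideals consist of quasinilpotents, $\sum\nolimits_{\beta}\leq\mathcal{R}_{\mathrm{cq}}\leq\mathcal{R}_{\mathrm{bw}}$, so $B=A/\mathcal{R}_{\mathrm{bw}}(A)$ is semiprime and $\sum\nolimits_{a}(B)$ is its largest central ideal; hence $\mathcal{R}_{\mathrm{bw}}^{a}(A)=q^{-1}\bigl(\sum\nolimits_{a}(B)\bigr)$, with $q\colon A\to B$ the quotient map, is the largest ideal of $A$ commutative modulo $\mathcal{R}_{\mathrm{bw}}(A)$. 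Next, if $A$ is commutative modulo $\mathcal{R}_{\mathrm{bw}}(A)$, then $A$ itself is an ideal of $A$ commutative modulo $\mathcal{R}_{\mathrm{bw}}(A)$, so $A\subseteq\mathcal{R}_{\mathrm{bw}}^{a}(A)\subseteq A$, i.e. $\mathcal{R}_{\mathrm{bw}}^{a}(A)=A$. Finally, invoking the preceding theorem, $A=\mathcal{R}_{\mathrm{bw}}^{a}(A)=\mathcal{R}_{\mathrm{bw}}(A)$, which is exactly the assertion that $A$ is $\mathcal{R}_{\mathrm{bw}}$-radical.

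There is essentially no obstacle at this stage: as the word ``formally'' signals, all the content has already been spent on the equality $\mathcal{R}_{\mathrm{bw}}^{a}=\mathcal{R}_{\mathrm{bw}}$, which rests on the transfinite stability of $BW$-ideals (Proposition \ref{ESBW}), on $\mathcal{R}_{\mathrm{bw}}$ being the largest $BW$-radical (Theorem \ref{bw}), and on the fact that $\mathcal{R}_{\mathrm{bw}}^{a}(A)$ is a $BW$-ideal (Lemma \ref{bwha}). The only step requiring a moment's care is the verification that $\mathcal{R}_{\mathrm{bw}}^{a}(A)$ really is the largest ideal commutative modulo $\mathcal{R}_{\mathrm{bw}}(A)$, and this is an immediate application of the properties of $\sum\nolimits_{a}$ recalled in the discussion of the centralization procedure.
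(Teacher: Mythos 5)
Your proof is correct and uses the same essential ingredient as the paper: the hypothesis is encoded as an equality involving $\mathcal{R}_{\mathrm{bw}}^{a}$, and then the identity $\mathcal{R}_{\mathrm{bw}}^{a}=\mathcal{R}_{\mathrm{bw}}$ from the preceding theorem finishes. The only (cosmetic) difference in organization: you work with $A$ itself, using that $\mathcal{R}_{\mathrm{bw}}^{a}(A)$ is the largest ideal of $A$ commutative modulo $\mathcal{R}_{\mathrm{bw}}(A)$ to get $\mathcal{R}_{\mathrm{bw}}^{a}(A)=A$ and then $A=\mathcal{R}_{\mathrm{bw}}(A)$; the paper works with the quotient $B=A/\mathcal{R}_{\mathrm{bw}}(A)$, observing that $B$ is commutative so $\mathcal{R}_{\mathrm{bw}}^{a}(B)=B$, and then combines the theorem with axiom $(\mathrm{H2})$ to deduce $B=(0)$. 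Both routes are valid and equally short; yours sidesteps the explicit appeal to $(\mathrm{H2})$.
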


\begin{proof}
Let $B=A/\mathcal{R}_{\mathrm{bw}}\left(  A\right)  $. Clearly $B$
$=\mathcal{R}^{a}\left(  B\right)  $ for every topological radical
$\mathcal{R}$. Therefore
\[
B=\mathcal{R}_{\mathrm{bw}}^{a}\left(  B\right)  =\mathcal{R}_{\mathrm{bw}%
}\left(  B\right)  =\mathcal{R}_{\mathrm{bw}}\left(  A/\mathcal{R}%
_{\mathrm{bw}}\left(  A\right)  \right)  =\left(  0\right)
\]
whence $A=\mathcal{R}_{\mathrm{bw}}\left(  A\right)  $.
\end{proof}

\begin{theorem}
\label{sbw}$\mathcal{R}_{\mathrm{sbw}}^{a\ast}=\mathcal{R}_{\mathrm{hc}}%
\vee\mathcal{R}_{\mathrm{cq}}^{a}$.
\end{theorem}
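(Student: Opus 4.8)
The plan is to recast the statement as a comparison of topological radicals and then settle it by two inclusions. Combining Theorem~\ref{hc} with the definition $\mathcal{R}_{\mathrm{sbw}}=\mathcal{R}_{\mathrm{sc}}\wedge\mathcal{R}_{\mathrm{bw}}$ gives $\mathcal{R}_{\mathrm{sbw}}=\mathcal{R}_{\mathrm{hc}}\vee\mathcal{R}_{\mathrm{cq}}$, so the goal becomes $(\mathcal{R}_{\mathrm{hc}}\vee\mathcal{R}_{\mathrm{cq}})^{a\ast}=\mathcal{R}_{\mathrm{hc}}\vee\mathcal{R}_{\mathrm{cq}}^{a}$. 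I would put $\mathcal{U}:=\mathcal{R}_{\mathrm{hc}}\vee\mathcal{R}_{\mathrm{cq}}^{a}$. Since $\mathcal{R}_{\mathrm{hc}}$ is a hereditary topological radical and $\mathcal{R}_{\mathrm{cq}}^{a}$ is a hereditary $BW$-radical, both are UTRs, so $\mathcal{U}$ is a topological radical, indeed the smallest topological radical exceeding both $\mathcal{R}_{\mathrm{hc}}$ and $\mathcal{R}_{\mathrm{cq}}^{a}$. I would also record that $\sum\nolimits_{a}\leq\mathcal{R}_{\mathrm{cq}}^{a}$: since the preradical $\sum\nolimits_{a}$ satisfies axiom $(\mathrm{H1})$ one has $\sum\nolimits_{a}\leq\sum\nolimits_{a}\ast\mathcal{R}$ for every closed ideal map $\mathcal{R}$, and $\mathcal{R}_{\mathrm{cq}}^{a}=\sum\nolimits_{a}\ast\mathcal{R}_{\mathrm{cq}}$ by definition.

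The inclusion $\mathcal{U}\leq\mathcal{R}_{\mathrm{sbw}}^{a\ast}$ is the easy half. On the one hand $\mathcal{R}_{\mathrm{hc}}\leq\mathcal{R}_{\mathrm{sbw}}\leq\mathcal{R}_{\mathrm{sbw}}^{a}\leq\mathcal{R}_{\mathrm{sbw}}^{a\ast}$; on the other, since $\mathcal{R}_{\mathrm{cq}}\leq\mathcal{R}_{\mathrm{sbw}}$, Lemma~\ref{ifp} gives $\mathcal{R}_{\mathrm{cq}}^{a}=\sum\nolimits_{a}\ast\mathcal{R}_{\mathrm{cq}}\leq\sum\nolimits_{a}\ast\mathcal{R}_{\mathrm{sbw}}=\mathcal{R}_{\mathrm{sbw}}^{a}\leq\mathcal{R}_{\mathrm{sbw}}^{a\ast}$. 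By the theorem immediately preceding this one, $\mathcal{R}_{\mathrm{sbw}}^{a\ast}$ is a $BW$-radical, in particular a topological radical; so minimality of $\mathcal{U}$ yields $\mathcal{U}\leq\mathcal{R}_{\mathrm{sbw}}^{a\ast}$.

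The reverse inclusion $\mathcal{R}_{\mathrm{sbw}}^{a\ast}\leq\mathcal{U}$ is the substance of the argument, and it hinges on the identity $\mathcal{U}^{a}=\mathcal{U}$. To prove this identity, fix a Banach algebra $A$ and set $B=A/\mathcal{U}(A)$; by axiom $(\mathrm{H2})$ for $\mathcal{U}$ we have $\mathcal{U}(B)=(0)$, hence $\mathcal{R}_{\mathrm{cq}}^{a}(B)=(0)$ because $\mathcal{R}_{\mathrm{cq}}^{a}\leq\mathcal{U}$, and combining with $\sum\nolimits_{a}\leq\mathcal{R}_{\mathrm{cq}}^{a}$ we obtain $\sum\nolimits_{a}(B)=(0)$; therefore $\mathcal{U}^{a}(A)=q_{\mathcal{U}(A)}^{-1}(\sum\nolimits_{a}(B))=\mathcal{U}(A)$. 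Next, $\mathcal{R}_{\mathrm{sbw}}=\mathcal{R}_{\mathrm{hc}}\vee\mathcal{R}_{\mathrm{cq}}\leq\mathcal{U}$, since $\mathcal{R}_{\mathrm{hc}}\leq\mathcal{U}$ and $\mathcal{R}_{\mathrm{cq}}\leq\mathcal{R}_{\mathrm{cq}}^{a}\leq\mathcal{U}$ while $\mathcal{R}_{\mathrm{sbw}}$ is the least topological radical above $\mathcal{R}_{\mathrm{hc}}$ and $\mathcal{R}_{\mathrm{cq}}$; hence, by Lemma~\ref{ifp}, $\mathcal{R}_{\mathrm{sbw}}^{a}=\sum\nolimits_{a}\ast\mathcal{R}_{\mathrm{sbw}}\leq\sum\nolimits_{a}\ast\mathcal{U}=\mathcal{U}^{a}=\mathcal{U}$. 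Since the convolution procedure $(\cdot)^{\ast}$ is monotone and fixes topological radicals, it follows that $\mathcal{R}_{\mathrm{sbw}}^{a\ast}\leq\mathcal{U}^{\ast}=\mathcal{U}$. Combined with the previous paragraph this gives $\mathcal{R}_{\mathrm{sbw}}^{a\ast}=\mathcal{U}=\mathcal{R}_{\mathrm{hc}}\vee\mathcal{R}_{\mathrm{cq}}^{a}$.

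I expect the identity $\mathcal{U}^{a}=\mathcal{U}$ to be the only real obstacle: it expresses that $\mathcal{R}_{\mathrm{hc}}\vee\mathcal{R}_{\mathrm{cq}}^{a}$ already absorbs every commutative ideal present modulo itself, so the transfinite centralization defining $\mathcal{R}_{\mathrm{sbw}}^{a\ast}$ stabilizes at its first step. An alternative route, closer to the tools just introduced and bypassing Theorem~\ref{hc}, would apply the Proposition computing $(\mathtt{H}_{\mathcal{F}})^{a\ast}$ with $\mathcal{F}=\{\mathcal{R}_{\mathrm{hc}},\mathcal{R}_{\mathrm{cq}}\}$ and $\mathcal{P}=\mathcal{R}_{\mathrm{cq}}$ (note $\sum\nolimits_{\beta}\leq\mathcal{R}_{\mathrm{cq}}$ and $\mathtt{H}_{\mathcal{G}}=\mathcal{R}_{\mathrm{hc}}$), giving $(\mathtt{H}_{\mathcal{F}})^{a\ast}=(\mathcal{R}_{\mathrm{cq}}^{a}\ast\mathcal{R}_{\mathrm{hc}})^{\ast}$; then Lemma~\ref{ip} identifies the right-hand side with $\mathcal{R}_{\mathrm{hc}}\vee\mathcal{R}_{\mathrm{cq}}^{a}$, and it remains to check $((\mathtt{H}_{\mathcal{F}})^{\ast})^{a\ast}=(\mathtt{H}_{\mathcal{F}})^{a\ast}$ — which once more reduces to $\mathcal{U}^{a}=\mathcal{U}$. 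So that identity is the load-bearing computation whichever way one proceeds.
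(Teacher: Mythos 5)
Your argument is correct, and the easy inclusion and the closing step (applying monotonicity of $(\cdot)^{\ast}$ and idempotence on topological radicals) mirror the paper's. Where you genuinely diverge is in how you establish the load-bearing identity $\mathcal{U}^{a}=\mathcal{U}$ for $\mathcal{U}=\mathcal{R}_{\mathrm{hc}}\vee\mathcal{R}_{\mathrm{cq}}^{a}$. The paper works through the explicit convolution presentation $\mathcal{S}=\bigl(\left(\mathcal{R}_{\mathrm{cq}}\ast\mathcal{R}_{\mathrm{hc}}\right)^{a}\bigr)^{\ast}$ furnished by Lemma~\ref{ip} and identity (\ref{pt}): it shows by contradiction that $\mathcal{R}_{\mathrm{hc}}(A/\mathcal{S}(A))=\mathcal{R}_{\mathrm{cq}}(A/\mathcal{S}(A))=(0)$, i.e.\ $\mathcal{R}_{\mathrm{hc}}\ast\mathcal{S}=\mathcal{S}$ and $\mathcal{R}_{\mathrm{cq}}\ast\mathcal{S}=\mathcal{S}$, and then uses associativity of $\ast$ to collapse $\mathcal{S}^{a}=\left(\mathcal{R}_{\mathrm{cq}}\ast\mathcal{R}_{\mathrm{hc}}\right)^{a}\ast\mathcal{S}=\mathcal{S}$. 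You instead note that $\sum\nolimits_{a}\leq\mathcal{R}_{\mathrm{cq}}^{a}\leq\mathcal{U}$ (the first inequality coming straight from axiom $(\mathrm{H1})$ for the preradical $\sum\nolimits_{a}$) and that $\mathcal{U}$, being a topological radical, satisfies $(\mathrm{H2})$; so $\sum\nolimits_{a}(A/\mathcal{U}(A))\subseteq\mathcal{U}(A/\mathcal{U}(A))=(0)$ and $\mathcal{U}^{a}=\mathcal{U}$ drops out in one line, with no appeal to the associativity machinery, to the explicit presentation of $\mathcal{S}$, or to the intermediate vanishing of $\mathcal{R}_{\mathrm{hc}}$ and $\mathcal{R}_{\mathrm{cq}}$ on the quotient. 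Your route is shorter and makes the conceptual content transparent (the centralization already stabilizes at step one because $\mathcal{U}$ already absorbs $\sum\nolimits_{a}$), at the cost of implicitly invoking the general facts that $(\cdot)^{\ast}$ is monotone and that a topological radical is $(\cdot)^{\ast}$-fixed; the paper's calculation avoids these general facts by staying inside the explicit convolution identities. Both are sound; yours is the cleaner of the two.
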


\begin{proof}
Let $\mathcal{S}=\left(  \left(  \mathcal{R}_{\mathrm{cq}}\ast\mathcal{R}%
_{\mathrm{hc}}\right)  ^{a}\right)  ^{\ast}$. By Lemma \ref{ip} and formula
(\ref{pt}) applied to $\mathcal{P}=\mathcal{R}_{\mathrm{cq}}$ and
$\mathcal{T}=\mathcal{R}_{\mathrm{hc}}$, we have that
\[
\mathcal{R}_{\mathrm{sbw}}=\mathcal{R}_{\mathrm{hc}}\vee\mathcal{R}%
_{\mathrm{cq}}\leq\mathcal{R}_{\mathrm{hc}}\vee\mathcal{R}_{\mathrm{cq}}%
^{a}=\left(  \mathcal{R}_{\mathrm{cq}}^{a}\ast\mathcal{R}_{\mathrm{hc}%
}\right)  ^{\ast}=\left(  \left(  \mathcal{R}_{\mathrm{cq}}\ast\mathcal{R}%
_{\mathrm{hc}}\right)  ^{a}\right)  ^{\ast}=\mathcal{S}.
\]
Let $A$ be a Banach algebra, and let $I=\mathcal{S}\left(  A\right)  $. By
Lemma \ref{ip},
\[
\mathcal{R}_{\mathrm{sbw}}^{a}(A)\subseteq\mathcal{S}^{a}\left(  A\right)
=q_{_{I}}^{-1}\left(  {\textstyle\sum\nolimits_{a}}\left(  A/I\right)
\right)  .
\]
As $\left(  \mathcal{R}_{\mathrm{cq}}\ast\mathcal{R}_{\mathrm{hc}}\right)
^{a}\ast\mathcal{S}=\mathcal{S}$ then $\mathcal{R}_{\mathrm{hc}}\left(
A/I\right)  =\left(  0\right)  $. Indeed, if $\mathcal{R}_{\mathrm{hc}}\left(
A/I\right)  \neq\left(  0\right)  $ then
\[
\mathcal{R}_{\mathrm{hc}}\ast\mathcal{S}\left(  A\right)  =q_{_{I}}%
^{-1}\left(  \mathcal{R}_{\mathrm{hc}}\left(  A/I\right)  \right)
\]
differs from $I$ and
\begin{align*}
\left(  \mathcal{R}_{\mathrm{cq}}\ast\mathcal{R}_{\mathrm{hc}}\right)
^{a}\ast\mathcal{S}\left(  A\right)   &  =\left(  \mathcal{R}_{\mathrm{cq}%
}^{a}\ast\mathcal{R}_{\mathrm{hc}}\right)  \ast\mathcal{S}\left(  A\right)
=\mathcal{R}_{\mathrm{cq}}^{a}\ast\left(  \mathcal{R}_{\mathrm{hc}}%
\ast\mathcal{S}\right)  \left(  A\right) \\
&  \neq I=\mathcal{S}\left(  A\right)  ,
\end{align*}
a contradiction. Therefore $\mathcal{R}_{\mathrm{hc}}\ast\mathcal{S}%
=\mathcal{S}$.

Similarly, we obtain that $\mathcal{R}_{\mathrm{cq}}\left(  A/I\right)
=\left(  0\right)  $ and $\mathcal{R}_{\mathrm{cq}}\ast\mathcal{S}%
=\mathcal{S}$. Then
\begin{align*}
\mathcal{R}_{\mathrm{sbw}}^{a}(A)  &  \subseteq\mathcal{S}^{a}\left(
A\right)  ={\sum\nolimits_{a}}\ast\mathcal{S}\left(  A\right)  ={\sum
\nolimits_{a}}\ast\left(  \mathcal{R}_{\mathrm{cq}}\ast\left(  \mathcal{R}%
_{\mathrm{hc}}\ast\mathcal{S}\right)  \right)  \left(  A\right) \\
&  =\left(  {\sum\nolimits_{a}}\ast\left(  \mathcal{R}_{\mathrm{cq}}%
\ast\mathcal{R}_{\mathrm{hc}}\right)  \right)  \ast\mathcal{S}\left(
A\right)  =\left(  \mathcal{R}_{\mathrm{cq}}\ast\mathcal{R}_{\mathrm{hc}%
}\right)  ^{a}\ast\mathcal{S}\left(  A\right) \\
&  =\mathcal{S}\left(  A\right)  ,
\end{align*}
i.e., $\mathcal{R}_{\mathrm{sbw}}^{a}\leq\mathcal{S}$. As $\mathcal{S}$ is a
topological radical,%
\[
\mathcal{R}_{\mathrm{sbw}}^{a\ast}=\left(  \mathcal{R}_{\mathrm{sbw}}%
^{a}\right)  ^{\ast}\leq\mathcal{S}^{\ast}=\mathcal{S}=\mathcal{R}%
_{\mathrm{hc}}\vee\mathcal{R}_{\mathrm{cq}}^{a}.
\]

On the other hand, as $\mathcal{R}_{\mathrm{cq}}\ast\mathcal{R}_{\mathrm{hc}%
}\leq\left(  \mathcal{R}_{\mathrm{hc}}\vee\mathcal{R}_{\mathrm{cq}}\right)
\ast\left(  \mathcal{R}_{\mathrm{hc}}\vee\mathcal{R}_{\mathrm{cq}}\right)
=\mathcal{R}_{\mathrm{hc}}\vee\mathcal{R}_{\mathrm{cq}}=\mathcal{R}%
_{\mathrm{sbw}},$
\[
\left(  \mathcal{R}_{\mathrm{cq}}\ast\mathcal{R}_{\mathrm{hc}}\right)
^{a}\leq\mathcal{R}_{\mathrm{sbw}}^{a}%
\]
and then%
\[
\mathcal{R}_{\mathrm{hc}}\vee\mathcal{R}_{\mathrm{cq}}^{a}=\mathcal{S}=\left(
\left(  \mathcal{R}_{\mathrm{cq}}\ast\mathcal{R}_{\mathrm{hc}}\right)
^{a}\right)  ^{\ast}\leq\left(  \mathcal{R}_{\mathrm{sbw}}^{a}\right)  ^{\ast
}=\mathcal{R}_{\mathrm{sbw}}^{a\ast}.
\]

\end{proof}

We will\textbf{ }mention now \textbf{ }an application of this result to the
problem of continuity of joint spectral radius. Let us recall the required
definitions. Consider the function $M\longmapsto\rho\left(  M\right)  $ for
bounded sets $M$ of a Banach algebra $A$. This function is upper continuous
(see \cite[Theorem 3.1]{ShT00}), that is,%
\begin{equation}
\lim\sup\rho\left(  M_{n}\right)  \leq\rho\left(  M\right)  \label{upp}%
\end{equation}
when $M_{n}$ converges to $M$ in the Hausdorff metric. The set $M$ is a
\textit{point of continuity of the joint spectral radius} if $\rho\left(
M_{n}\right)  \rightarrow\rho\left(  M\right)  $ for every sequence $\left(
M_{n}\right)  $ convergent to $M$.

\begin{corollary}
\label{con} Let $M$ be a precompact set in a Banach algebra $A$. If
$\rho\left(  M/\mathcal{R}_{\mathrm{sbw}}^{a\ast}\left(  A\right)  \right)
<\rho\left(  M\right)  $ then $M$ is a point of continuity of the joint
spectral radius.
\end{corollary}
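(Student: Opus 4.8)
The plan is to combine two facts established above --- that $\mathcal{R}_{\mathrm{sbw}}^{a\ast}$ is a $BW$-radical, and that $\mathcal{R}_{\mathrm{sbw}}^{a\ast}=\mathcal{R}_{\mathrm{hc}}\vee\mathcal{R}_{\mathrm{cq}}^{a}$ (Theorem~\ref{sbw}) --- with the only continuity input available, the upper semicontinuity $(\ref{upp})$ of the joint spectral radius. Put $J=\mathcal{R}_{\mathrm{sbw}}^{a\ast}(A)$. Then $J$ is a $BW$-ideal, so $\rho(M)=\max\{\rho(M/J),r(M)\}$ for the given precompact set $M$; since $\rho(M/J)<\rho(M)$ by hypothesis, this forces $\rho(M)=r(M)=:s$, and we put $s':=\rho(M/J)<s$. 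For any sequence $M_{n}\to M$ in the Hausdorff metric, $(\ref{upp})$ gives $\limsup_{n}\rho(M_{n})\le s$, so everything reduces to the lower bound $\liminf_{n}\rho(M_{n})\ge s$.

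Next I would reduce to a single product. Fix $\varepsilon$ with $0<\varepsilon<s-s'$. By the definition of $r$ and the equality $r(M)=s$, there are an integer $k\ge1$ and elements $a_{1},\dots,a_{k}\in M$ such that $a:=a_{1}\cdots a_{k}\in M^{k}$ satisfies $\rho(a)^{1/k}>s-\varepsilon$. The restriction $\varepsilon<s-s'$ was imposed precisely so that $a$ automatically obeys the crucial inequality $\rho(a)>\rho(a/J)$: indeed $a/J\in(M/J)^{k}$, and the spectral radius of a single element never exceeds the joint spectral radius of a set containing it, hence $\rho(a/J)\le\rho(M/J)^{k}=(s')^{k}<(s-\varepsilon)^{k}<\rho(a)$.

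The heart of the matter --- and the step I expect to be the main obstacle --- is the single-element assertion: \emph{if $c\in A$ satisfies $\rho(c)>\rho(c/J)$ then $b\mapsto\rho(b)$ is lower semicontinuous at $c$}, i.e.\ $\liminf_{b\to c}\rho(b)\ge\rho(c)$. To prove it I would pick $\lambda\in\sigma(c)$ with $|\lambda|=\rho(c)>\rho(c/J)$, so that $\lambda\notin\sigma(c/J)$, i.e.\ $\lambda-c$ is invertible modulo $J$; the task is then to show that $\lambda$ is an isolated point of $\sigma(c)$ whose Riesz idempotent persists, together with a nearby spectral value, under small norm-perturbations of $c$ --- for then every $b$ close to $c$ has a point of $\sigma(b)$ close to $\lambda$, giving $\rho(b)\ge|\lambda|$ up to an arbitrarily small error. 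This is exactly where the description $J=\mathcal{R}_{\mathrm{hc}}\vee\mathcal{R}_{\mathrm{cq}}^{a}(A)$ of Theorem~\ref{sbw} must enter: one has to run an abstract Fredholm/Riesz argument relative to $J$, handling the hypocompact contribution by the fact that, modulo $\mathcal{R}_{\mathrm{hc}}$, peripheral spectral values are isolated with finite-rank Riesz idempotents (Barnes' theorem and the finite-dimensional perturbation theory of isolated spectral points, both already used above), and the $\mathcal{R}_{\mathrm{cq}}^{a}$ contribution via commutativity modulo $\mathcal{R}_{\mathrm{cq}}$ and \cite[Lemma~5.5]{ShT12}, then gluing the two along the transfinite chain implicit in the convolution defining $J$, in the spirit of Lemma~\ref{bwha} and Proposition~\ref{ESBW}. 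Establishing this stability of the peripheral spectrum relative to $J$ is the real content of the corollary.

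Granting the single-element assertion, the proof closes quickly. Since $M_{n}\to M$ in the Hausdorff metric we may choose $a_{i}^{(n)}\in M_{n}$ with $a_{i}^{(n)}\to a_{i}$ as $n\to\infty$, and put $b_{n}:=a_{1}^{(n)}\cdots a_{k}^{(n)}\in M_{n}^{k}$, so $b_{n}\to a$ in norm. Lower semicontinuity of $\rho$ at $a$ yields $\liminf_{n}\rho(b_{n})\ge\rho(a)$; since also $r(M_{n})\ge\rho(b_{n})^{1/k}$ (because $b_{n}^{m}\in M_{n}^{km}$ and $\rho(b_{n}^{m})=\rho(b_{n})^{m}$ for all $m$) and $\rho(M_{n})\ge r(M_{n})$, we obtain
\[
\liminf_{n}\rho(M_{n})\ge\liminf_{n}r(M_{n})\ge\liminf_{n}\rho(b_{n})^{1/k}\ge\rho(a)^{1/k}>s-\varepsilon .
\]
As $\varepsilon\in(0,s-s')$ was arbitrary, $\liminf_{n}\rho(M_{n})\ge s=\rho(M)$, and together with $(\ref{upp})$ this gives $\rho(M_{n})\to\rho(M)$; thus $M$ is a point of continuity of the joint spectral radius.
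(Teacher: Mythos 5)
Your overall reduction (replace $\mathcal{R}_{\mathrm{sbw}}^{a\ast}$ by $\mathcal{R}_{\mathrm{hc}}\vee\mathcal{R}_{\mathrm{cq}}^{a}$ via Theorem~\ref{sbw}, then handle continuity) starts in the right direction, but it then attempts to \emph{re-derive} the continuity criterion, and this is where the argument breaks down. The paper's proof is a one-liner: after Theorem~\ref{sbw}, it simply cites \cite[Theorem~6.3]{ShT12}, which already states that $M$ is a point of continuity of the joint spectral radius whenever $\rho\left(M/\left(\mathcal{R}_{\mathrm{hc}}\vee\mathcal{R}_{\mathrm{cq}}^{a}\right)(A)\right)<\rho(M)$. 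There is nothing more to do.

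The genuine gap in your proposal is the ``single-element assertion'': that $\rho(c)>\rho(c/J)$ (with $J=\mathcal{R}_{\mathrm{hc}}\vee\mathcal{R}_{\mathrm{cq}}^{a}(A)$) forces lower semicontinuity of the scalar spectral radius at $c$. You correctly identify this as ``the heart of the matter,'' but you do not prove it --- you only gesture at an ``abstract Fredholm/Riesz argument relative to $J$,'' invoking Barnes' theorem, finite-dimensional perturbation theory, and a gluing along the transfinite chain implicit in the convolution. None of these steps is carried out, and it is far from clear they can be made to work in the stated generality: the spectral radius is in general only upper semicontinuous, and the analogue of Riesz perturbation theory for the peripheral spectrum relative to an abstract hypocompact ideal, let alone to the $\mathcal{R}_{\mathrm{cq}}^{a}$ contribution where one is commutative only modulo a compactly quasinilpotent ideal, is a substantive theorem, not a remark. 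In effect you have reformulated \cite[Theorem~6.3]{ShT12} (or a single-element variant of it) as a goal and then left it as a sketch, so the proof is incomplete. The surrounding bookkeeping (forcing $\rho(M)=r(M)$, the choice of $a\in M^{k}$ with $\rho(a/J)<\rho(a)$, and passing from $\rho(b_{n})$ to $r(M_{n})$ to $\rho(M_{n})$) is correct, but it all hangs on the unproved core claim.
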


\begin{proof}
By\textbf{ }virtue of Theorem \ref{sbw} it is sufficient to remark that $M$ is
a point of continuity of the joint spectral radius if $\rho\left(  M/\left(
\mathcal{R}_{\mathrm{hc}}\vee\mathcal{R}_{\mathrm{cq}}^{a}\right)  \left(
A\right)  \right)  <\rho\left(  M\right)  $ by \cite[Theorem 6.3]{ShT12}.
\end{proof}

The following corollary is a consequence of \cite[Corollary 6.4]{ShT12} and
Theorem \ref{sbw}.

\begin{corollary}
Let $A$ be a Banach algebra\emph{,} and let $G$ be a semigroup in
$\mathcal{R}_{\mathrm{sbw}}^{a\ast}\left(  A\right)  .$ If $G$ consists of
quasinilpotent elements of $A$ then the closed subalgebra $\overline{A\left(
G\right)  }$ generated by $G$ is compactly quasinilpotent.
\end{corollary}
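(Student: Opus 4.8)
The plan is to reduce the statement directly to \cite[Corollary 6.4]{ShT12} by means of the identification of the radical furnished by Theorem \ref{sbw}. First I would apply Theorem \ref{sbw} to rewrite $\mathcal{R}_{\mathrm{sbw}}^{a\ast}\left(A\right)=\left(\mathcal{R}_{\mathrm{hc}}\vee\mathcal{R}_{\mathrm{cq}}^{a}\right)\left(A\right)$; put $J:=\left(\mathcal{R}_{\mathrm{hc}}\vee\mathcal{R}_{\mathrm{cq}}^{a}\right)\left(A\right)$. Under this identification the hypothesis becomes: $G$ is a semigroup consisting of quasinilpotent elements of $A$ and $G\subseteq J$.

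Next I would note that the closed subalgebra $\overline{A\left(G\right)}$ generated by $G$ again lies in $J$: since $J$ is a closed two-sided ideal it is in particular a closed subalgebra containing $G$, hence it contains the subalgebra generated by $G$ and its closure. Now \cite[Corollary 6.4]{ShT12}, which asserts precisely that the closed subalgebra generated by a semigroup of quasinilpotent elements lying inside $\left(\mathcal{R}_{\mathrm{hc}}\vee\mathcal{R}_{\mathrm{cq}}^{a}\right)\left(A\right)$ is compactly quasinilpotent, applies verbatim to $J$ and $G$ and yields the assertion.

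Thus the proof carries no genuine obstacle of its own; the substantive content sits entirely in the two cited ingredients. If one wanted a more self-contained argument, one could instead start from the fact (established earlier) that $\mathcal{R}_{\mathrm{sbw}}^{a\ast}\left(A\right)$ is a $BW$-ideal, hence a Berger-Wang algebra, so that by \cite[Proposition 3.5]{T85} the algebra $\overline{A\left(G\right)}$ is finitely quasinilpotent; in particular each of its elements is quasinilpotent, so $\overline{A\left(G\right)}$ coincides with its own Jacobson radical. The one remaining step, upgrading finite quasinilpotence to compact quasinilpotence, is where the real work lies: it would be carried out along the transfinite chain defining $J$, invoking Theorem \ref{inf1} on the hypocompact layers, using that the $\mathcal{R}_{\mathrm{cq}}^{a}$-layers are commutative modulo a compactly quasinilpotent ideal together with Theorem \ref{iness}, and assembling the pieces by transfinite stability of the compactly quasinilpotent radical. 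This bookkeeping is exactly what \cite[Corollary 6.4]{ShT12} has already done, which is why the one-line reduction above is the natural proof.
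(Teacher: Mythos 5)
Your primary argument --- rewrite $\mathcal{R}_{\mathrm{sbw}}^{a\ast}(A)$ as $\left(\mathcal{R}_{\mathrm{hc}}\vee\mathcal{R}_{\mathrm{cq}}^{a}\right)(A)$ via Theorem \ref{sbw}, note $\overline{A(G)}$ stays inside this closed ideal, then cite \cite[Corollary 6.4]{ShT12} --- is exactly the reduction the paper announces in the sentence preceding the corollary, so your main line is correct and matches the authors' own view of where the content lives.

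The proof body the paper actually writes out, however, takes a more self-contained route that you should compare with your secondary sketch. The paper argues: (i) since $G$ consists of quasinilpotents, $r(M)=0$ for every precompact $M\subseteq G$; (ii) since $\mathcal{R}_{\mathrm{sbw}}^{a\ast}\leq\mathcal{R}_{\mathrm{bw}}$, the radical is a $BW$-ideal, hence a Berger-Wang algebra, so $\rho(M)=r(M)=0$ for such $M$; (iii) by \cite[Proposition 3.5]{T85} the (non-closed) subalgebra $A(G)$ is finitely quasinilpotent; (iv) by Corollary \ref{con}, $\rho$ is continuous at every precompact set in $\mathcal{R}_{\mathrm{sbw}}^{a\ast}(A)$, and since every compact subset of $\overline{A(G)}$ is a Hausdorff limit of finite subsets of $A(G)$ (on which $\rho$ vanishes), compact quasinilpotence of the closure follows. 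Your secondary sketch diverges here in two ways: it asserts that \cite[Proposition 3.5]{T85} already gives $\overline{A(G)}$ finitely quasinilpotent (it only gives $A(G)$; passing to the closure is precisely what the continuity corollary is for --- upper semicontinuity, inequality (\ref{upp}), runs in the wrong direction), and it proposes a transfinite layer-by-layer bootstrap through $J$, which neither the paper nor \cite[Corollary 6.4]{ShT12} actually does at this point --- the continuity corollary makes that bookkeeping unnecessary. So: one-line reduction is right; the fallback sketch has a small misattribution and a heavier route than needed.
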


\begin{proof}
As $G$ consists of quasinilpotent elements, $r\left(  M\right)  =0$ for every
precompact set $M$ in $G.$ As $\mathcal{R}_{\mathrm{sbw}}^{a\ast}%
\leq\mathcal{R}_{\mathrm{bw}},$ then $\rho\left(  M\right)  =r\left(
M\right)  $ for every precompact set $M$ in $\mathcal{R}_{\mathrm{sbw}}%
^{a\ast}\left(  A\right)  $. Hence $\rho\left(  M\right)  =0$ for every
precompact set $M$ in $G$. As it was described above (see, for instance,
\cite[Proposition 3.5]{T85}), $A\left(  G\right)  $ is finitely
quasinilpotent. It follows from Corollary \ref{con} that $\rho$ is continuous
at any precompact set in $\mathcal{R}_{\mathrm{sbw}}^{a\ast}\left(  A\right)
$. As the closure $\overline{A\left(  G\right)  }$ is contained in
$\mathcal{R}_{\mathrm{sbw}}^{a\ast}\left(  A\right)  $, and each compact
subset of $\overline{A\left(  G\right)  }$ is a limit of a net of finite
subsets of $A(G)$, the algebra $\overline{A\left(  G\right)  }$ is compactly quasinilpotent.
\end{proof}

\end{document}